\newtheorem{theorem}{Theorem}[section]
\newtheorem{lemma}[theorem]{Lemma}
\newtheorem{remark}[theorem]{Remark}
\newcommand{\dd}{\,{\rm d}}
\newcommand{\bs}{\boldsymbol}
\DeclareMathOperator*{\img}{img}
\newcommand{\sign}{\operatorname{sign}}
\newcommand{\curl}{\operatorname{curl}}
\renewcommand{\div}{\operatorname{div}}
\newcommand{\grad}{\operatorname{grad}}
\DeclareMathOperator*{\tr}{tr}
\DeclareMathOperator*{\rot}{rot}
\newcommand{\sym}{\operatorname{sym}}
\newcommand{\skw}{\operatorname{skw}}
\newcommand{\mskw}{\operatorname{mskw}}
\newcommand{\vskw}{\operatorname{vskw}}
\newcommand{\defm}{\operatorname{def}}
\newcommand{\inc}{\operatorname{inc}}
\begin{document}
\title{A finite element elasticity complex in three dimensions}
 \author{Long Chen}%
 \address{Department of Mathematics, University of California at Irvine, Irvine, CA 92697, USA}%
 \email{chenlong@math.uci.edu}%
 \author{Xuehai Huang}%
 \address{School of Mathematics, Shanghai University of Finance and Economics, Shanghai 200433, China}%
 \email{huang.xuehai@sufe.edu.cn}%

 \thanks{The first author was supported by NSF DMS-1913080 and DMS-2012465.}
 \thanks{The second author was supported by the National Natural Science Foundation of China Project 11771338, the Natural Science Foundation of Shanghai 21ZR1480500  and the Fundamental Research Funds for the Central
Universities 2019110066.}


\begin{abstract}
A finite element elasticity complex on tetrahedral meshes is devised. The $H^1$ conforming finite element is the smooth finite element developed by Neilan for the velocity field in a discrete Stokes complex. The symmetric div-conforming finite element is the Hu-Zhang element for stress tensors. The construction of an $H(\inc)$-conforming finite element for symmetric tensors is the main focus of this paper. 
The key tools of the construction are the decomposition of polynomial tensor spaces and the characterization of the trace of the $\inc$ operator. 
The polynomial elasticity complex and Koszul elasticity complex are created to derive the decomposition of polynomial tensor spaces. 
The trace of the $\inc$ operator is induced from a Green's identity. Trace complexes and bubble complexes are also derived to facilitate the construction. Our construction appears to be the first $H(\inc)$-conforming finite elements on tetrahedral meshes without further splits. 
\end{abstract}

\maketitle


\section{Introduction}
A Hilbert complex is a sequence of Hilbert spaces connected by a sequence of linear operators satisfying the property: the composition of two consecutive operators is vanished. Let $\Omega$ be a bounded domain in $\mathbb R^3$. The elasticity complex
\begin{equation}\label{eq:elasticitycomplex}
\boldsymbol{RM}\stackrel{\subset}{\longrightarrow} \boldsymbol  H^{1} (\Omega;\mathbb R^3) \stackrel{\defm}{\longrightarrow} \boldsymbol  H(\mathrm{inc}, \Omega;\mathbb{S}) \stackrel{\text { inc }}{\longrightarrow} \boldsymbol  H(\operatorname{div}, \Omega; \mathbb{S}) \stackrel{\text { div }}{\longrightarrow} \boldsymbol  L^{2} (\Omega; \mathbb R^3)\rightarrow 0
\end{equation}
plays an important role in both theoretical and numerical study of linear elasticity, where $\bs{RM}$ is the space of the linearized rigid body motion, $\defm$ is the symmetric gradient operator, $\boldsymbol   H(\mathrm{inc}, \Omega;\mathbb{S}) $ is the space of symmetric tensor $\boldsymbol  \tau$ s.t. $\inc \boldsymbol  \tau := - \curl (\curl \boldsymbol  \tau)^{\intercal}\in \boldsymbol  L^2(\Omega; \mathbb M)$, and  $\boldsymbol  H(\operatorname{div}, \Omega; \mathbb{S})$ is the space for the symmetric stress tensor $\boldsymbol  \sigma$ with $\div\boldsymbol  \sigma\in \boldsymbol  L^2(\Omega; \mathbb R^3)$. 
We shall present a finite element elasticity complex 
\begin{equation}\label{eq:introelascomplex3dfem}
\boldsymbol{RM}\stackrel{\subset}{\longrightarrow} \boldsymbol  V_h \stackrel{\defm}{\longrightarrow} \bs\Sigma_h^{\rm inc} \stackrel{\text { inc }}{\longrightarrow} \bs\Sigma_h^{\rm div}  \stackrel{\text { div }}{\longrightarrow} \mathcal Q_h\rightarrow 0
\end{equation}
on a tetrahedral mesh $\mathcal T_h$. In the complex \eqref{eq:introelascomplex3dfem}, the $H^1$-conforming finite element is the smooth finite element $\boldsymbol  V_h$ developed by Neilan for the velocity field in a finite element Stokes complex~\cite{Neilan2015}. The $\boldsymbol  H(\div;\mathbb S)$-conforming finite element $\bs\Sigma_h^{\rm div}$ is the Hu-Zhang element for the symmetric stress tensor~\cite{Hu2015,HuZhang2015}. The space $\mathcal Q_h$ for $\boldsymbol  L^2(\Omega;\mathbb R^3)$ is simply the discontinuous piecewise polynomial space. The missing component is an $\boldsymbol  H(\inc; \mathbb S)$-conforming finite element $\bs\Sigma_h^{\rm inc}$ which is the focus of this work. 

In the engineering application, the most important component in the complex \eqref{eq:introelascomplex3dfem} is the finite element $\bs\Sigma_h^{\rm div}$ for the stress tensor. Construction of finite element for stress tensors can be benefit from the structure of the complex. For example, the bubble polynomial elasticity complex is build and used in~\cite{ArnoldAwanouWinther2008} to construct a finite element for symmetric stress tensors. Here the bubble polynomial spaces are referred to polynomials with vanished traces on the boundary of each element. In~\cite{HuZhang2015}, a precise characterization of $\boldsymbol  H(\div; \mathbb S)$ bubble polynomial space is given which leads to a stable $\mathbb P_{k}(\mathbb S) -\mathbb P_{k-1}(\mathbb R^d)$ stress-displacement finite element pair for $k\geq d+1, d=2,3$. Identification of its preceding space $\bs\Sigma_h^{\rm inc}$ will be helpful for the design of fast solvers and a posterior error analysis~\cite{Chen;Hu;Huang;Man:2018Residual-based} for the mixed formulation of linear elasticity problems. It may also find application in other fields such as continuum modeling of defects~\cite{Amstutz;Van-Goethem:2019incompatibility} and relativity~\cite{Christiansen:2011linearization}.

Elasticity complex \eqref{eq:elasticitycomplex} and many more complexes can be derived from composition of de Rham complexes in the so-called Bernstein-Gelfand-Gelfand (BGG) construction~\cite{Arnold;Hu:2020Complexes}. Finite element complexes for the de Rham complex are well understood and can be derived systematically in the framework of Finite Element Exterior Calculus~\cite{Arnold;Falk;Winther:2010Finite,Arnold;Falk;Winther:2006Finite}. It is natural to ask if a finite element elasticity complex can be derived by the BGG construction. The key in the BGG construction is the existence of smooth finite element de Rham complexes. With nodal finite element de Rham complexes, a two dimensional finite element elasticity complex has been constructed in~\cite{Christiansen;Hu;Hu:2018finite} which generalizes the first finite element elasticity complex of Arnold and Winther~\cite{Arnold;Winther:2002finite}. 

In three dimensions, however, smooth discrete de Rham complexes are not easy due to the super-smoothness of multivariate splines~\cite{Floater;Hu:2020characterization}. To relax the super-smoothness, the element can be further split so that inside one element the shape function is not $\mathcal C^{\infty}$ smooth. Such approach leads to the so-called macro elements. In particular, a two dimensional elasticity strain complex has been constructed on the Clough-Tocher split of a triangle~\cite{christiansen2019finite}, and more recently a finite element elasticity complex has been constructed on the Alfeld split of a tetrahedron~\cite{Christiansen;Gopalakrishnan;Guzman;Hu:2020discrete} based on the smooth finite element de Rham complex~\cite{Fu;Guzman;Neilan:2020smooth} on such split. 

We shall construct a finite element elasticity complex on a tetrahedral mesh without further split. Let $K$ be a polyhedron. We first give a polynomial elasticity complex and a Koszul type complex, which can be summarized as one diagram below:
\begin{equation*}
\resizebox{.995\hsize}{!}{$
\xymatrix{
\boldsymbol{RM}\ar@<0.4ex>[r]^-{\subset} & \mathbb P_{k+1}(K;\mathbb R^3)\ar@<0.4ex>[r]^-{\defm}\ar@<0.4ex>[l]^-{\pi_{RM}} & \mathbb P_{k}(K;\mathbb S)\ar@<0.4ex>[r]^-{\inc}\ar@<0.4ex>[l]^-{ \boldsymbol\tau\cdot\boldsymbol x}  & \mathbb P_{k-2}(K;\mathbb S) \ar@<0.4ex>[r]^-{\div}\ar@<0.4ex>[l]^-{\boldsymbol x\times\boldsymbol\tau \times \boldsymbol x} & \mathbb P_{k-3}(K;\mathbb R^3)  \ar@<0.4ex>[r]^-{} \ar@<0.4ex>[l]^-{\sym(\boldsymbol v\boldsymbol x^{\intercal})}
& 0 \ar@<0.4ex>[l]^-{\supset} }.
$}
\end{equation*}
Several decompositions of polynomial tensor spaces, especially for $\mathbb P_k(K; \mathbb S)$, can be obtained consequently. We then study trace operators for the $\inc$ operator since the traces on face and edges are crucial to ensure the $H(\inc)$-conformity.  To do so, we use more symmetric notation $\inc \boldsymbol  \tau = \nabla \times \bs\tau \times \nabla$ and derive the following symmetric form Green's identity:
 \begin{align*}
   (\nabla \times \bs\sigma \times \nabla, \boldsymbol  \tau)_K - (\boldsymbol  \sigma,\nabla \times \bs\tau \times \nabla)_K &= ({\rm tr}_1(\boldsymbol  \sigma), {\rm tr}_2(\boldsymbol  \tau))_{\partial K} - ( {\rm tr}_2(\boldsymbol  \sigma), {\rm tr}_1(\boldsymbol  \tau))_{\partial K} \notag\\
   &\quad + \sum_{F\in \mathcal F(K)}\sum_{e\in \partial F}(\boldsymbol  n \cdot\boldsymbol  \sigma \times \boldsymbol  n, \boldsymbol  t_{F,e}\cdot \boldsymbol  \tau)_{e} \\
   &\quad - \sum_{F\in \mathcal F(K)}\sum_{e\in \partial F}(\boldsymbol  t_{F,e}\cdot \boldsymbol  \sigma, \boldsymbol  n \cdot\boldsymbol  \tau \times \boldsymbol  n)_{e},
\end{align*}
where, with $\Pi_F$ denoting the projection operator to face $F$, 
\begin{align*}
{\rm tr}_1(\boldsymbol  \tau) &:=  \boldsymbol  n\times\boldsymbol  \tau \times \boldsymbol  n, \\
   {\rm tr}_2(\boldsymbol  \tau) &:= \Pi_F(\boldsymbol  \tau\times\nabla)\times\boldsymbol  n + \nabla_F(\boldsymbol  n\cdot \boldsymbol  \tau \ \Pi_F).
\end{align*}
We show ${\rm tr}_1(\boldsymbol  \tau)\in \boldsymbol  H(\div_F\div_F, F;\mathbb S)$ and ${\rm tr}_2(\boldsymbol  \tau) \in  \boldsymbol  H(\rot_F, F;\mathbb S)$, and reveal boundary complexes induced by trace operators; see Section \ref{sec:tracecomplex} for details. Then the edge traces of the face traces ${\rm tr}_1(\boldsymbol  \tau)$ and ${\rm tr}_2(\boldsymbol  \tau)$ imply the continuity of $\boldsymbol  \tau|_e$ and $(\nabla \times \boldsymbol  \tau)\cdot \boldsymbol  t_e$. Further edge degree of freedoms will be derived from the requirement $\inc \boldsymbol  \tau$ is in Hu-Zhang finite element space. The face degree of freedom will be based on the decomposition of polynomial tensors of $ \boldsymbol  H(\div_F\div_F, F;\mathbb S)$ and $ \boldsymbol  H(\rot_F, F;\mathbb S)$. The volume degree of freedom is from the decomposition of $\mathbb P_{k}(K;\mathbb S)$. 

Recently there has been a lot progress in the construction of finite elements for tensors~\cite{Chen;Hu;Huang:2018Multigrid,Christiansen;Hu;Hu:2018finite,christiansen2019finite,ChenHuang2020,Chen;Huang:2020Discrete,Chen;Huang:2020Finite,HuLiang2021,Christiansen;Gopalakrishnan;Guzman;Hu:2020discrete}. Our construction appears to be the first $H(\inc)$-conforming finite elements for symmetric tensors on tetrahedral meshes without further splits. Our finite element spaces are constructed for tetrahedrons but some results, e.g., traces and Green's formulae etc, hold for general polyhedrons.  Our approach of constructing finite element for tensors, through decomposition of polynomial space and characterization of trace operators, seems simpler and more straightforward than the BGG construction through smooth finite element de Rham complexes. 

\subsection*{Notation on meshes}
Let $\{\mathcal {T}_h\}_{h>0}$ be a regular family of polyhedral meshes
of $\Omega$. For each element $K\in\mathcal{T}_h$, denote by $\boldsymbol{n}_K $ the unit outward normal vector to $\partial K$. In most places, it will be abbreviated as $\boldsymbol{n}$ for simplicity. Denote by $\mathcal{F}(K)$, $\mathcal{E}(K)$ and $\mathcal{V}(K)$ the set of all faces, edges and vertices of $K$, respectively. 
Similarly let $\mathcal{E}(F)$ be the set of all edges of face $F$. For $F\in \mathcal F(K)$, its orientation is given by the outwards normal direction $\boldsymbol  n_{\partial K}$ which also induces a consistent orientation of edge $e\in \mathcal E(F)$. Namely the edge vectors $\boldsymbol  t_{F,e}$ and outwards normal vector $\boldsymbol  n_{\partial K}$ follows the right hand rule. Then define $\boldsymbol  n_{F,e} = \boldsymbol  t_{F,e}\times \boldsymbol  n_{\partial K}$ as the outwards normal vector of $e$ on the face $F$. 

Let $\mathcal{F}_h$, $\mathcal{E}_h$, and $\mathcal{V}_h$ be the union of all faces, all edges, vertices and of the partition $\mathcal {T}_h$, respectively.
For any $F\in\mathcal{F}_h$,
fix a unit normal vector $\boldsymbol{n}_F$ and two unit tangent vectors $\boldsymbol{t}_{F,1}$ and $\boldsymbol{t}_{F,2}$, which will be abbreviated as $\boldsymbol{t}_{1}$ and $\boldsymbol{t}_{2}$ without causing any confusions.
For any $e\in\mathcal{E}_h$,
fix a unit tangent vector $\boldsymbol{t}_e$ and two unit normal vectors $\boldsymbol{n}_{e,1}$ and $\boldsymbol{n}_{e,2}$, which will be abbreviated as $\boldsymbol{n}_{1}$ and $\boldsymbol{n}_{2}$ without causing any confusions. We emphasize that $\boldsymbol  n_F, \boldsymbol  t_e, \boldsymbol  n_{e,1}$, and $\boldsymbol  n_{e,2}$ are globally defined not depending on the elements. 

The rest of this paper is organized as follows. In Section 2, we present a notation system on the vector and tensor operations. We construct polynomial complexes and derive decompositions of polynomial tensors spaces related to the elasticity complex in Section 3.
In Section 4, we discuss traces for the $\inc$ operator based on the Green's identity, and present corresponding trace complexes and bubble complexes. In Section 5, we construct an $H(\inc)$-conforming finite element and a finite element elasticity complex in three dimensions.

\section{Vector and tensor operations}
One complication on the construction of finite elements for tensors is the notation system for tensor operations. We shall adapt the notation system used in the solid mechanic~\cite{Kelly:Mechanics}. In particular, we separate the row and column operations to the right and left sides of the matrix, respectively. 
\subsection{Tensor calculus}
Define the dot product and the cross product from the left 
$$\boldsymbol  b\cdot \boldsymbol  A, \quad \boldsymbol  b\times \boldsymbol  A,$$
which is applied column-wisely to the matrix $A$. When the vector is on the right of the matrix
$$
\boldsymbol  A\cdot \boldsymbol  b, \quad \boldsymbol  A\times \boldsymbol  b,
$$
the operation is defined row-wisely. Here for the clean of notation, when the vector $\boldsymbol  b$ is on the right, it is treated as a row-vector $\boldsymbol  b^{\intercal}$ while when on the left, it is a column vector.

The ordering of performing the row and column products does not matter which leads to the associative rule of the triple products
$$
\boldsymbol  b\times \boldsymbol  A\times \boldsymbol  c := (\boldsymbol  b\times \boldsymbol  A)\times \boldsymbol  c = \boldsymbol  b\times (\boldsymbol  A\times \boldsymbol  c).
$$
Similar rules hold for $\boldsymbol  b\cdot \boldsymbol  A\cdot \boldsymbol  c$ and $\boldsymbol  b\cdot \boldsymbol  A\times \boldsymbol  c$ and thus parentheses can be safely skipped. 
Another benefit is the transpose of products. For the transpose of product of two objects, we take transpose of each one, switch their order, and add a negative sign if it is the cross product. 

For two column vectors $\boldsymbol  u, \boldsymbol  v$, the tensor product $\boldsymbol  u\otimes \boldsymbol  v := \boldsymbol  u\boldsymbol  v^{\intercal}$ is a matrix which is also known as the dyadic product $\boldsymbol  u\boldsymbol  v: = \boldsymbol  u\boldsymbol  v^{\intercal}$ with more clean notation (one $^{\intercal}$ is skipped). The row-wise product and column-wise product with another vector will be applied to the neighbor vector
\begin{align*}
\boldsymbol  x\cdot (\boldsymbol  u\boldsymbol  v) = (\boldsymbol  x\cdot \boldsymbol  u) \boldsymbol  v^{\intercal}, \quad (\boldsymbol  u\boldsymbol  v)\cdot \boldsymbol  x = \boldsymbol  u (\boldsymbol  v\cdot \boldsymbol  x), \\
\boldsymbol  x\times (\boldsymbol  u\boldsymbol  v) = (\boldsymbol  x\times \boldsymbol  u) \boldsymbol  v, \quad (\boldsymbol  u\boldsymbol  v)\times \boldsymbol  x = \boldsymbol  u (\boldsymbol  v\times \boldsymbol  x).
\end{align*}

We treat Hamilton operator $\nabla = (\partial_1, \partial_2, \partial_3)^{\intercal}$ as a column vector. 
For a vector function $\boldsymbol  u = (u_1, u_2, u_3)^{\intercal}$, $\curl \boldsymbol  u= \nabla \times \boldsymbol  u$, and $\div \boldsymbol  u = \nabla \cdot \boldsymbol  u$ are standard differential operations. Define $\nabla \boldsymbol  u = \nabla \boldsymbol  u^{\intercal} = (\partial_i u_j)$ which can be understood as the dyadic product of Hamilton operator $\nabla$ and column vector $\boldsymbol  u$.

Apply these matrix-vector operations to the Hamilton operator $\nabla$, we get column-wise differentiation $\nabla \cdot \boldsymbol  A, \nabla \times \boldsymbol  A,$
and row-wise differentiation
$\boldsymbol  A\cdot \nabla, \boldsymbol  A\times \nabla.$ Conventionally, the differentiation is applied to the function after the $\nabla$ symbol. So a more conventional notation is
\begin{align*}
\boldsymbol  A\cdot \nabla  : = (\nabla \cdot \boldsymbol  A^{\intercal})^{\intercal}, \quad \boldsymbol  A\times \nabla : = - (\nabla \times \boldsymbol  A^{\intercal})^{\intercal}.
\end{align*}
By moving the differential operator to the right, the notation is simplified and the transpose rule for matrix-vector products can be formally used. Again the right most column vector is treated as a row vector to make the notation more clean. We introduce the double differential operators as
\begin{equation*}
\inc \boldsymbol  A : = \nabla \times \boldsymbol  A \times \nabla, \quad \div\div\boldsymbol  A := \nabla\cdot \boldsymbol  A\cdot \nabla.
\end{equation*}
As the column and row operations are independent, and no chain rule is needed, the ordering of operations is not important and parentheses is skipped. Parentheses will be added when it is necessary. 

In the literature, differential operators are usually applied row-wisely to tensors. To distinguish with $\nabla$ notation, we define operators in letters as
\begin{align*}
\grad \boldsymbol  u &:= \boldsymbol  u \nabla^{\intercal} = (\partial_j u_i ) = (\nabla \boldsymbol  u)^{\intercal},\\
\curl \boldsymbol  A &: = - \boldsymbol  A\times \nabla = (\nabla \times \boldsymbol  A^{\intercal})^{\intercal},\\
\div \boldsymbol  A &: = \boldsymbol  A\cdot \nabla = (\nabla \cdot \boldsymbol  A^{\intercal})^{\intercal}.
\end{align*}
Note that the transpose operator $^{\intercal}$ is involved for tensors and in particular $\grad \boldsymbol  u \neq \nabla \boldsymbol  u$, $\curl \boldsymbol  A\neq \nabla \times \boldsymbol  A$, $\curl \boldsymbol  A\neq \boldsymbol  A\times \nabla$ and $\div \boldsymbol  A\neq \nabla \cdot \boldsymbol  A$. 
For symmetric tensors, $\div \boldsymbol  A = \boldsymbol  A\cdot \nabla,$ $\curl \boldsymbol  A = - \boldsymbol  A \times \nabla$. 


Integration by parts can be applied to row-wise differentiations as well as column-wise ones. For example, we shall frequently use
\begin{align*}
(\nabla \times \boldsymbol  \tau, \boldsymbol  \sigma)_K &= (\boldsymbol  \tau, \nabla \times \boldsymbol  \sigma)_{K} + (\boldsymbol  n\times \boldsymbol  \tau, \boldsymbol  \sigma)_{\partial K},\\
(\boldsymbol  \tau \times \nabla, \boldsymbol  \sigma)_K &= (\boldsymbol  \tau, \boldsymbol  \sigma \times \nabla)_{K} + (\boldsymbol  \tau \times \boldsymbol  n, \boldsymbol  \sigma)_{\partial K}.
\end{align*}
Similar formulae hold for $\grad, \curl, \div$ operators. Be careful on the possible sign and transpose when letter differential operators and $\nabla$ operators are mixed together. Chain rules are also better used in the same category of differential operations (row-wise, column-wise or letter operators). 

Denote the space of all  $3\times3$ matrix by $\mathbb{M}$, all symmetric $3\times3$ matrix by $\mathbb{S}$ and all skew-symmetric $3\times3$ matrix by $\mathbb{K}$. 
For any matrix $\boldsymbol  B\in \mathbb M$, we can decompose it into symmetric and skew-symmetric part as
$$
\boldsymbol  B = {\rm sym}(\boldsymbol  B) + {\rm skw}(\boldsymbol  B):= \frac{1}{2}(\boldsymbol  B + \boldsymbol  B^{\intercal}) + \frac{1}{2}(\boldsymbol  B - \boldsymbol  B^{\intercal}).
$$
The symmetric gradient of a vector function $\bs u$ is defined as
$$
\defm \boldsymbol  u := \sym \nabla \boldsymbol  u = \frac{1}{2}(\nabla \boldsymbol  u + (\nabla \boldsymbol  u)^{\intercal}) = \frac{1}{2}(\boldsymbol  u\nabla + \nabla \boldsymbol  u) .
$$
In the last identity, the dyadic product is used to emphasize the symmetry in notation. In the context of elasticity, it is commonly denoted by $\varepsilon (\bs u)$.

We define an isomorphism of $\mathbb R^3$ and the space of skew-symmetric matrices $\mathbb K$ as follows: for a vector $\boldsymbol  \omega =
( \omega_1, \omega_2, \omega_3)^{\intercal}
\in \mathbb R^3,$
\begin{equation*}
\mskw \boldsymbol  \omega :=  
\begin{pmatrix}
 0 & -\omega_3 & \omega_2 \\
\omega_3 & 0 & - \omega_1\\
-\omega_2 & \omega_1 & 0
\end{pmatrix}. 
\end{equation*}
Obviously $\mskw: \mathbb R^3 \to \mathbb K$ is a bijection. We define $\vskw: \mathbb M\to \mathbb R^3$ by $\vskw := \mskw^{-1}\circ \skw$. Using these notation, we have the decomposition
\begin{equation}
\label{eq:du} \grad \boldsymbol  v =\defm\boldsymbol  v+\frac{1}{2}\mskw(\nabla\times\boldsymbol  v).
\end{equation}

\subsection{Identities on tensors}
We shall present identities based on diagram \eqref{BGGdiagram} and refer to~\cite{Arnold;Hu:2020Complexes} for an unified proof. Let $S \boldsymbol  \tau = \boldsymbol  \tau^{\intercal} - \tr(\boldsymbol  \tau) \boldsymbol  I$ and $\iota: \mathbb R\to \mathbb M$ by $\iota v = v \boldsymbol  I$. 
\begin{equation}\label{BGGdiagram}
\begin{tikzcd}
\mathcal C^{\infty}(\mathbb{R})  \arrow{r}{\nabla} &\mathcal C^{\infty}(\mathbb R^3) \arrow{r}{\nabla \times} &\mathcal C^{\infty}(\mathbb R^3) \arrow{r}{\nabla\cdot} & \mathcal C^{\infty}(\mathbb{R})\\
\mathcal C^{\infty}(\mathbb R^3)\arrow[u,swap,"\cdot \boldsymbol  x"] \arrow{r}{\nabla} \arrow[ur, "\mathrm{id}"]&\mathcal C^{\infty}(\mathbb {M}) \arrow[u,swap,"\cdot \boldsymbol  x"] \arrow[u,swap,"\cdot \boldsymbol  x"] \arrow{r}{\nabla\times } \arrow[ur, "2\vskw"]&\mathcal C^{\infty}(\mathbb{M}) \arrow[u,swap,"\cdot \boldsymbol  x"] \arrow{r}{\nabla\cdot}\arrow[ur, "\tr"] & \mathcal C^{\infty}(\mathbb R^3) \arrow[u,swap,"\cdot \boldsymbol  x"] \\
\mathcal C^{\infty}(\mathbb R^3)\arrow[u,swap,"\times \boldsymbol  x"]\arrow{r}{\nabla} \arrow[ur,"-\mskw"]&\mathcal C^{\infty}(\mathbb{M}) \arrow[u,swap,"\times \boldsymbol  x"] \arrow{r}{\nabla\times} \arrow[ur, "S"]&\mathcal C^{\infty}(\mathbb{M}) \arrow[u,swap,"\times \boldsymbol  x"] \arrow{r}{\nabla\cdot}\arrow[ur, "2\vskw"] & \mathcal C^{\infty}(\mathbb R^3) \arrow[u,swap,"\times \boldsymbol  x"]\\
\mathcal C^{\infty}(\mathbb{R})\arrow[u,swap,"\boldsymbol  x"]\arrow{r}{\nabla} \arrow[ur, "\iota"]&\mathcal C^{\infty}(\mathbb R^3) \arrow[u,swap,"\boldsymbol  x"] \arrow{r}{\nabla\times } \arrow[ur, "-\mskw"]&\mathcal C^{\infty}(\mathbb R^3)\arrow[u,swap,"\boldsymbol  x"] \arrow{r}{\nabla\cdot}\arrow[ur, "\mathrm{id}"] & \mathcal C^{\infty}(\mathbb{R}) \arrow[u,swap,"\boldsymbol  x"].
\end{tikzcd}
\end{equation}

The north-east diagonal operator is the Poisson bracket $[\dd, \kappa] = \dd((\cdot)\kappa) - (\dd(\cdot))\kappa$ for $\dd = \nabla, \nabla \times, \nabla \cdot$ being applied from the left and the Koszul operator $\kappa = \boldsymbol  x, \times \boldsymbol  x, \cdot \boldsymbol  x$ applied from the right. For example, we have
\begin{align}
\label{eq:curlxtau} \nabla \times (\boldsymbol\tau\cdot \boldsymbol  x) - (\nabla\times \boldsymbol  \tau )\cdot \boldsymbol  x &=  2\vskw\boldsymbol\tau,\quad\;\; \text{block } (1,2),\\
\notag
\nabla (\boldsymbol  u \times \boldsymbol  x) - (\nabla \boldsymbol  u)\times \boldsymbol  x &= - \mskw \boldsymbol  u, \quad \text{block } (2,1). 
\end{align}
The parallelogram formed by the north-east diagonal and the horizontal operators is anticomutative. For example, we will use the following identities:
\begin{align}
\label{eq:trcurl}
\tr (\nabla \times \boldsymbol  \tau) &= - \nabla \cdot 2\vskw(\boldsymbol  \tau), \quad\;\; \text{block } (1,2),\\
\notag
2{\rm vskw}\nabla \boldsymbol  u &= - \nabla \times \boldsymbol  u,\\
\notag
\nabla\times \boldsymbol  u &= \nabla\cdot {\rm mskw}(\boldsymbol  u).
\end{align}
By taking transpose, we can get similar formulae for row-wise differential operators. By replacing $\partial_i$ by $x_i$, we can get the anticomutativity of the parallelgorms formed by the vertical and the north-east diagonal operators. For example, \eqref{eq:trcurl} becomes
\begin{equation}\label{eq:trxtimes}
\tr (\boldsymbol  \tau \times \bs x) = - 2\vskw(\boldsymbol  \tau)\cdot \bs x.
\end{equation}

\subsection{Tensors on surfaces}
Given a plane $F$ with normal vector $\boldsymbol  n$, for a vector $\boldsymbol  v\in \mathbb R^3$, we define its projection to plane $F$ 
$$
\Pi_F \boldsymbol  v := (\boldsymbol  n\times \boldsymbol  v)\times \boldsymbol  n = \boldsymbol  n\times (\boldsymbol  v\times \boldsymbol  n) = - \boldsymbol  n\times (\boldsymbol  n\times \boldsymbol  v) = (\bs I - \bs n\bs n^{\intercal})\bs v,
$$
which is called the tangential component of $\boldsymbol  v$. The vector $$\Pi_F^{\bot}\boldsymbol  v :=\boldsymbol  n\times \boldsymbol  v = (\bs n\times \Pi_F)\bs v$$ is called the tangential trace of $\boldsymbol  v$, which is a rotation of $\Pi_F \boldsymbol  v$ on $F$ ($90^{\circ}$ counter-clockwise with respect to $\boldsymbol  n$). Note that $\Pi_F$ is a $3\times 3$ symmetric matrix.
%
With a slight abuse of notation, we use $\Pi_F$ to denote the piece-wisely defined projection to the boundary of $K$.

We treat Hamilton operator $\nabla = (\partial_1, \partial_2, \partial_3)^{\intercal}$ as a column vector and define
$$
\nabla_F: = \Pi_F \nabla, \quad \nabla_F^{\bot} := \Pi_F^{\bot} \nabla.
$$
We have the decomposition
\begin{equation*}
\nabla = \nabla_F + \bs n \, \partial_n. 
\end{equation*}

For a scalar function $v$, 
\begin{align*}
\nabla_F v &= \Pi_F (\nabla v) = - \boldsymbol  n \times (\boldsymbol  n\times \nabla v), \\
\nabla_F^{\bot} v &= \boldsymbol  n \times \nabla v = \boldsymbol  n\times \nabla_F v,
\end{align*}
are the surface gradient of $v$ and surface $\curl$, respectively. For a vector function $\boldsymbol  v$, $\nabla_F\cdot \boldsymbol  v$ is the surface divergence:
$$
\div_F\boldsymbol  v: = \nabla_F\cdot \boldsymbol  v = \nabla_F\cdot (\Pi_F\boldsymbol  v).
$$
By the cyclic invariance of the mix product and the fact $\boldsymbol  n$ is constant, the surface rot operator is
\begin{equation}\label{eq:rotF}
{\rm rot}_F \boldsymbol  v := \nabla_F^{\bot}\cdot \boldsymbol  v = (\boldsymbol  n\times \nabla)\cdot \boldsymbol  v = \boldsymbol  n\cdot (\nabla \times \boldsymbol  v),
\end{equation}
which is the normal component of $\nabla \times \boldsymbol  v$. 
The tangential trace of $\nabla \times \boldsymbol  u$ is 
\begin{equation}\label{eq:tangentialtrace}
\boldsymbol  n\times (\nabla \times \boldsymbol  v) = \nabla (\boldsymbol  n\cdot \boldsymbol  v) - \partial_n \boldsymbol  v. 
\end{equation}
By definition,
\begin{equation*}
\nabla_F^{\bot}\cdot \boldsymbol  v = - \nabla_F\cdot (\boldsymbol  n\times \boldsymbol  v), \quad\nabla_F\cdot \boldsymbol  v = \nabla_F^{\bot}\cdot (\boldsymbol  n\times \boldsymbol  v).
\end{equation*}


When involving tensors, we define, for a vector function $\boldsymbol  v$,
\begin{align*}
&\nabla_F\boldsymbol  v:=\nabla_F\boldsymbol  v^{\intercal} = \Pi_F \nabla \boldsymbol  v^{\intercal}, \quad \grad_F \boldsymbol  v = \boldsymbol  v \nabla_F = (\nabla_F\boldsymbol  v)^{\intercal},\\
&\nabla_F^{\bot}\boldsymbol  v:=\nabla_F^{\bot}\boldsymbol  v^{\intercal} = \boldsymbol  n \times (\nabla \boldsymbol  v^{\intercal}), \quad \curl_F\boldsymbol  v:=\boldsymbol  v \nabla_F^{\bot} = ( \nabla_F^{\bot}\boldsymbol  v)^{\intercal},\\
&\defm_{F}\boldsymbol  v:= \sym \nabla_F\boldsymbol  v, \quad \sym\curl_F\boldsymbol  v :=\sym(\curl_F\boldsymbol  v).
\end{align*}
For a tensor function $\bs\tau$,
\begin{align*}
\div_F\bs\tau:=\bs\tau\cdot\nabla_F = (\nabla_F\cdot \boldsymbol  \tau^{\intercal})^{\intercal},\quad \div_F\div_F\boldsymbol  \tau: = \nabla_F\cdot \bs\tau\cdot\nabla_F,\\
\text{rot}_F\bs\tau:=\bs\tau\cdot(\boldsymbol  n\times\nabla) = (\nabla_F^{\bot}\cdot \boldsymbol  \tau^{\intercal})^{\intercal} , \quad {\rm rot}_F{\rm rot}_F\boldsymbol  \tau: = \nabla_F^{\bot}\cdot \bs\tau\cdot\nabla_F^{\bot}.
\end{align*}
Although we define the surface differentiation through the projection of differentiation of a function defined in space, it can be verified that the definition is intrinsic in the sense that it depends only on the function value on the surface $F$. Namely $\nabla_F v = \nabla_F (v|_{F}), \nabla_F\cdot \bs v = \nabla_F \cdot \Pi_F \bs v, \nabla_F \bs v = \nabla_F(\bs v|_F)$ and thus $\Pi_F$ is sometimes skipped after $\nabla_F$.

\section{Polynomial Complexes}
In this section we consider polynomial elasticity complexes on a bounded and topologically trivial domain $D\subset \mathbb R^3$ in this section.
Without loss of generality, we assume $(0,0,0) \in D$ which can be easily satisfied by changing of variable $\boldsymbol  x - \boldsymbol  x_c$ with an arbitrary $\boldsymbol  x_c\in D$ for polynomials in $D$.  

Given 
a non-negative integer $k$, 
let $\mathbb P_k(D)$ stand for the set of all polynomials in $D$ with the total degree no more than $k$, and $\mathbb P_k(D; \mathbb{X})$ denote the tensor or vector version for $\mathbb X = \mathbb S,\mathbb K, \mathbb M$, or $\mathbb R^3$. Similar notation will be applied to a two dimensional face $F$ and one dimensional edge $e$.

Recall that $\dim \mathbb P_{k}(D) = { k + d \choose d}$ for a $d$-dimensional domain $D$, $\dim \mathbb M = 9, \dim \mathbb S = 6,$ and $\dim \mathbb K = 3$. 
We list a useful result in~\cite{Chen;Huang:2020Finite}
\begin{equation}
\label{eq:radialderivativeprop1}
\mathbb P_k(D)\cap\ker(\ell+\boldsymbol x\cdot\nabla) =0
\end{equation}
for any positive number $\ell$.

\subsection{Polynomial elasticity complex}
The polynomial de Rham complex is
\begin{equation}\label{eq:polyderham}
\resizebox{.9\hsize}{!}{$
\mathbb R\autorightarrow{$\subset$}{} \mathbb P_{k+1}(D)\autorightarrow{$\nabla$}{} \mathbb P_{k}(D;\mathbb R^3)\autorightarrow{$\nabla \times$}{}\mathbb P_{k-2}(D;\mathbb R^3) \autorightarrow{$\nabla\cdot$}{} \mathbb P_{k-3}(D)\autorightarrow{}{}0
$}.
\end{equation}
As $D$ is topologically trivial, complex \eqref{eq:polyderham} is also exact, which means the range of each map is the kernel of the succeeding map. 

\begin{lemma}\label{lem:divsymtensoronto}
 $\div: \sym (\boldsymbol  x \mathbb P_{k-3}(D;\mathbb R^3)) \to \mathbb P_{k-3}(D;\mathbb R^3)$ is bijective.
\end{lemma}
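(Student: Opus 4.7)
The plan is to prove bijectivity by combining injectivity with a dimension count. First I would observe that the map $\boldsymbol{v}\mapsto \sym(\boldsymbol{x}\boldsymbol{v}^{\intercal})$ from $\mathbb{P}_{k-3}(D;\mathbb{R}^3)$ to its image is injective: the diagonal entries of $\boldsymbol{x}\boldsymbol{v}^{\intercal}+\boldsymbol{v}\boldsymbol{x}^{\intercal}=0$ give $x_i v_i=0$ as polynomial identities, forcing $\boldsymbol{v}=0$. Hence
\begin{equation*}
\dim \sym(\boldsymbol{x}\mathbb{P}_{k-3}(D;\mathbb{R}^3)) \;=\; \dim \mathbb{P}_{k-3}(D;\mathbb{R}^3) \;=\; 3\tbinom{k}{3},
\end{equation*}
and it then suffices to show that $\boldsymbol{v}\mapsto \div\sym(\boldsymbol{x}\boldsymbol{v}^{\intercal})$ has trivial kernel.

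Next I would compute this composition explicitly. Using the row-wise differentiation convention of Section~2, a direct calculation gives $\div(\boldsymbol{x}\boldsymbol{v}^{\intercal})=\boldsymbol{v}+(\nabla\cdot\boldsymbol{v})\,\boldsymbol{x}$ and $\div(\boldsymbol{v}\boldsymbol{x}^{\intercal})=3\boldsymbol{v}+(\boldsymbol{x}\cdot\nabla)\boldsymbol{v}$, so
\begin{equation*}
2\,\div\sym(\boldsymbol{x}\boldsymbol{v}^{\intercal}) \;=\; (4+\boldsymbol{x}\cdot\nabla)\boldsymbol{v} + (\nabla\cdot\boldsymbol{v})\,\boldsymbol{x}.
\end{equation*}
Assuming this expression vanishes, set $\phi:=\nabla\cdot\boldsymbol{v}$. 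Taking $\nabla\cdot$ of both sides and using the identities $\nabla\cdot((\boldsymbol{x}\cdot\nabla)\boldsymbol{v}) = \phi+(\boldsymbol{x}\cdot\nabla)\phi$ and $\nabla\cdot(\phi\,\boldsymbol{x}) = 3\phi+(\boldsymbol{x}\cdot\nabla)\phi$ produces $2(4+\boldsymbol{x}\cdot\nabla)\phi=0$. Applying \eqref{eq:radialderivativeprop1} with $\ell=4$ forces $\phi=0$, and then the original equation reduces to $(4+\boldsymbol{x}\cdot\nabla)\boldsymbol{v}=0$; applying \eqref{eq:radialderivativeprop1} componentwise yields $\boldsymbol{v}=0$.

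The main thing to watch is that the Euler-type operator appearing after each application of $\div$ has a strictly positive shift, so that the kernel lemma \eqref{eq:radialderivativeprop1} actually applies; the factor $4$ (and its reappearance after the second divergence) is precisely what makes the argument go through. A more structural alternative would invoke the north-east diagonals in the last column of the BGG diagram \eqref{BGGdiagram}, tracing the relation $\sym(\boldsymbol{v}\boldsymbol{x}^{\intercal})\leftrightarrow \div$ through the Koszul/Poisson-bracket identities, but the direct coordinate computation above seems the cleanest route and avoids having to chase signs through the intermediate operators.
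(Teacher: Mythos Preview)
Your proof is correct and follows essentially the same route as the paper: both reduce to injectivity via the dimension count, compute $2\,\div\sym(\boldsymbol{x}\boldsymbol{v}^{\intercal})=(4+\boldsymbol{x}\cdot\nabla)\boldsymbol{v}+(\nabla\cdot\boldsymbol{v})\boldsymbol{x}$, take a further divergence to kill $\nabla\cdot\boldsymbol{v}$ via \eqref{eq:radialderivativeprop1}, and then apply \eqref{eq:radialderivativeprop1} once more to conclude $\boldsymbol{v}=0$. The only addition is that you explicitly justify $\dim\sym(\boldsymbol{x}\mathbb{P}_{k-3})=\dim\mathbb{P}_{k-3}$, which the paper asserts without comment.
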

\begin{proof}
As $\div(\sym (\boldsymbol  x \mathbb P_{k-3}(D;\mathbb R^3))) \subseteq \mathbb P_{k-3}(D;\mathbb R^3)$ and $\dim\sym (\boldsymbol  x \mathbb P_{k-3}(D;\mathbb R^3))=\dim\mathbb P_{k-3}(D;\mathbb R^3)$, it is sufficient to prove $\sym (\boldsymbol  x \mathbb P_{k-3}(D;\mathbb R^3))\cap\ker(\div)=\{\bs0\}$. That is: for any $\boldsymbol q\in\mathbb P_{k-3}(D;\mathbb R^3)$ satisfying $\div\sym(\boldsymbol  x \boldsymbol q^{\intercal})=\boldsymbol0$, we are going to prove $\boldsymbol q =\boldsymbol 0$. 

By direct computation,
\begin{align*}
\div (\boldsymbol  q\boldsymbol  x^{\intercal}) &= (\div \boldsymbol  x) \boldsymbol  q +  (\grad \boldsymbol  q) \cdot \boldsymbol  x = 3\boldsymbol  q + (\grad \boldsymbol  q) \cdot \boldsymbol  x,\\
\div (\boldsymbol  x\boldsymbol  q^{\intercal}) & = (\div \boldsymbol  q) \boldsymbol  x + (\grad \boldsymbol  x)\cdot \boldsymbol  q = \boldsymbol  q + (\div \boldsymbol  q) \boldsymbol  x, \\
2\div \sym(\boldsymbol x\boldsymbol q^{\intercal}) & = 4\boldsymbol  q + (\grad \boldsymbol  q) \cdot \boldsymbol  x + (\div \boldsymbol  q) \boldsymbol  x.
\end{align*}
It follows from $\div \sym(\boldsymbol  x\boldsymbol q)=\boldsymbol 0$ that 
\begin{equation}\label{eq:divsymxq}
4\boldsymbol  q + (\grad \boldsymbol  q) \cdot \boldsymbol  x = - (\div \boldsymbol  q) \boldsymbol  x.
\end{equation}
Since $\div((\grad \boldsymbol  q) \cdot \boldsymbol  x)=(1+\boldsymbol  x\cdot\grad)\div\boldsymbol  q$,
applying the divergence operator $\div$ on both side of \eqref{eq:divsymxq} yields
$$
(5+\boldsymbol  x\cdot\grad)\div\boldsymbol  q=-(3+\boldsymbol  x\cdot\grad)\div\boldsymbol  q.
$$
Hence we acquire from \eqref{eq:radialderivativeprop1} that $\div\boldsymbol  q=0$, and \eqref{eq:divsymxq} reduces to
\[
4\boldsymbol  q + (\grad \boldsymbol  q) \cdot \boldsymbol  x =\bs0 \quad \forall~\boldsymbol  x\in D.
\]
Applying \eqref{eq:radialderivativeprop1} again gives $\boldsymbol q=\boldsymbol 0$. 
\end{proof}

Recall that the linearized rigid body motion is 
\begin{equation}\label{eq:RM}
\bs{RM} = \{\boldsymbol  a \times \boldsymbol  x + \boldsymbol  b: \boldsymbol  a, \boldsymbol  b\in \mathbb R^3\} = \{\boldsymbol  N \boldsymbol  x + \boldsymbol  b: \boldsymbol  N\in \mathbb K, \boldsymbol  b\in \mathbb R^3\}.
\end{equation}

\begin{lemma}\label{lm:polycomplex}
The polynomial sequence
\begin{equation}\label{eq:elascomplex3dPoly}
\resizebox{.9\hsize}{!}{$
\boldsymbol{RM}\autorightarrow{$\subset$}{} \mathbb P_{k+1}(D;\mathbb R^3)\autorightarrow{$\defm$}{} \mathbb P_{k}(D;\mathbb S)\autorightarrow{$\inc$}{}\mathbb P_{k-2}(D;\mathbb S) \autorightarrow{$\div$}{} \mathbb P_{k-3}(D;\mathbb R^3)\autorightarrow{}{}0
$}
\end{equation}
is an exact complex.
\end{lemma}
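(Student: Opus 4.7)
The plan is to verify that \eqref{eq:elascomplex3dPoly} is a complex, identify the kernel of $\defm$ with $\bs{RM}$, deduce surjectivity of $\div$ from Lemma \ref{lem:divsymtensoronto}, and settle middle exactness by a BGG-type reduction to the polynomial de Rham complex \eqref{eq:polyderham}, supplemented with a dimension count.

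The complex property is a direct calculation in the $\nabla$-notation of Section 2. Writing $\defm \bs v = \tfrac{1}{2}(\nabla \bs v + \nabla \bs v^{\intercal})$, the composition $\inc(\defm \bs v) = \nabla \times \defm \bs v \times \nabla$ vanishes because the column-wise $\nabla \times$ kills $\nabla \bs v$ (its columns are gradients) and the row-wise $\times \nabla$ kills $\nabla \bs v^{\intercal}$ (its rows are gradients); likewise $\div(\inc \bs \tau) = 0$ since row-wise $\div \circ \curl = 0$ applies entry-by-entry. For the kernel of $\defm$, if $\defm \bs v = \bs 0$ on a polynomial $\bs v$, differentiating $\partial_i v_j + \partial_j v_i = 0$ and using the Killing combination $\partial_i \partial_j v_k = \partial_i(\defm \bs v)_{jk} + \partial_j(\defm \bs v)_{ik} - \partial_k(\defm \bs v)_{ij}$ forces all second-order derivatives of $\bs v$ to vanish; hence $\bs v = \bs A \bs x + \bs b$, and $\sym \bs A = \bs 0$ places $\bs A \in \mathbb K$, giving $\bs v \in \bs{RM}$. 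Surjectivity of $\div : \mathbb P_{k-2}(D;\mathbb S) \to \mathbb P_{k-3}(D;\mathbb R^3)$ is immediate from Lemma \ref{lem:divsymtensoronto} together with the inclusion $\sym(\bs x \mathbb P_{k-3}(D;\mathbb R^3)) \subseteq \mathbb P_{k-2}(D;\mathbb S)$.

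For middle exactness I would first establish $\ker(\inc) \subseteq \img(\defm)$ in $\mathbb P_k(D;\mathbb S)$ by a BGG reduction. Given symmetric $\bs \tau$ with $\inc \bs \tau = \bs 0$, set $\bs \alpha := \nabla \times \bs \tau$; the hypothesis $\bs \alpha \times \nabla = \bs 0$ says every row of $\bs \alpha$ is curl-free, so polynomial de Rham yields $\bs \alpha = \grad \bs g$ for some $\bs g \in \mathbb P_k(D;\mathbb R^3)$. Identity \eqref{eq:trcurl} together with the symmetry of $\bs \tau$ gives $\tr \bs \alpha = -2 \div \vskw \bs \tau = 0$, whence $\div \bs g = 0$, and another use of polynomial de Rham produces $\bs h \in \mathbb P_{k+1}(D;\mathbb R^3)$ with $\nabla \times \bs h = \bs g$. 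A further step, extracting a symmetric primitive via the north-east diagonal structure of diagram \eqref{BGGdiagram}, then delivers $\bs v \in \mathbb P_{k+1}(D;\mathbb R^3)$ with $\defm \bs v = \bs \tau$. Exactness at $\mathbb P_{k-2}(D;\mathbb S)$ then follows from rank-nullity together with the Euler characteristic identity
\[
6 - 3\binom{k+4}{3} + 6\binom{k+3}{3} - 6\binom{k+1}{3} + 3\binom{k}{3} = 0,
\]
a routine check via Pascal's rule.

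The main obstacle is the BGG reduction itself: one must carefully thread the symmetry of $\bs \tau$ through three successive applications of polynomial de Rham while staying within the prescribed polynomial degrees, and then extract a \emph{symmetric} primitive at the end. This Saint-Venant compatibility step can equivalently be packaged as an explicit homotopy built from the Koszul operators $\bs \tau \cdot \bs x$, $\bs x \times \bs \tau \times \bs x$, and $\sym(\bs v \bs x^{\intercal})$ appearing in the diagram preceding the lemma, and in this form it simultaneously yields the polynomial decompositions referenced in the introduction.
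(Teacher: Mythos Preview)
Your plan is correct and follows essentially the same route as the paper: verify the complex property, identify $\ker(\defm)=\bs{RM}$, invoke Lemma~\ref{lem:divsymtensoronto} for surjectivity of $\div$, carry out a BGG-type reduction to the polynomial de Rham complex for $\ker(\inc)=\img(\defm)$, and finish with a dimension count for exactness at $\mathbb P_{k-2}(D;\mathbb S)$. The only place you are vague is the ``further step'' extracting a symmetric primitive; in the paper this is simply the observation that $\grad(\nabla\times\bs h)=\nabla\times(\grad\bs h)$, so $\nabla\times(\bs\tau-\grad\bs h)=\bs 0$, de Rham gives $\bs\tau=\grad\bs h+\nabla\bs w$, and symmetrizing yields $\bs\tau=\defm(\bs h+\bs w)$ --- exactly the commutativity you cite from diagram~\eqref{BGGdiagram}.
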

\begin{proof}
Verification of \eqref{eq:elascomplex3dPoly} being a complex is straightforward using our notation system:
$$
\nabla \times (\nabla \boldsymbol  u + \boldsymbol  u\nabla)\times \nabla = 0, \quad (\nabla \times \boldsymbol  \tau\times \nabla)\cdot\nabla = 0.
$$

We then verify the exactness. 

\medskip
\noindent {\em 1. If $\defm (\boldsymbol  v) = 0$, then $\boldsymbol  v\in \bs{RM}$.} This is well-known and can be found in e.g.~\cite{Ciarlet:2010inequality}. 

\medskip

\noindent{\em 2. $\defm\mathbb P_{k+1}(D;\mathbb R^3)=\mathbb P_{k}(D;\mathbb S)\cap\ker(\inc)$, i.e. if $\inc(\boldsymbol  \tau) = 0$ and $\boldsymbol  \tau\in \mathbb P_{k}(D;\mathbb S)$, then there exists a $\boldsymbol  v\in \mathbb P_{k+1}(D;\mathbb R^3)$, s.t. $\boldsymbol  \tau = \defm \boldsymbol  v$}. 

As $\nabla \times (\boldsymbol  \tau \times \nabla) = 0$, we apply the exactness of de Rham complex \eqref{eq:polyderham} to each column of $\boldsymbol  \tau \times \nabla$ to conclude there exists $\boldsymbol q\in\mathbb P_{k}(D;\mathbb R^3)$ such that $
\boldsymbol  \tau \times \nabla=\nabla\boldsymbol q.$ As $\boldsymbol  \tau$ is symmetric, taking transpose, we get
$$
 - \nabla \times \boldsymbol  \tau = \boldsymbol  q\nabla. 
$$
And use \eqref{eq:trcurl} to conclude
$$
\nabla\cdot \boldsymbol  q = \tr (\nabla \boldsymbol  q) =\tr ( \boldsymbol  \tau \times \nabla) = - \tr (\nabla \times \boldsymbol  \tau) = \nabla \cdot 2\vskw(\boldsymbol  \tau) = 0.
$$
Hence there exists $\boldsymbol q_1\in\mathbb P_{k+1}(D;\mathbb R^3)$ such that
\[
\boldsymbol q=\nabla \times \boldsymbol q_1,
\]
which implies
\[
-\nabla \times \boldsymbol  \tau = \boldsymbol  q \nabla =  (\nabla \times \boldsymbol  q_1) \nabla = \nabla \times ( \boldsymbol  q_1 \nabla). 
\]
i.e. $\nabla \times (\boldsymbol \tau + \boldsymbol q_1\nabla)=0$. Then there exists $\boldsymbol q_2\in\mathbb P_{k+1}(D;\mathbb R^3)$ such that
\[
\boldsymbol \tau + \boldsymbol q_1 \nabla = \nabla\boldsymbol q_2.
\]
Then, as $\boldsymbol  \tau$ is symmetric,
$$\boldsymbol \tau=\sym\boldsymbol \tau=\sym(\nabla\boldsymbol q_2 - \boldsymbol q_1 \nabla )=\defm(\boldsymbol q_2 - \boldsymbol q_1)\in\defm\mathbb P_{k+1}(D;\mathbb R^3).$$

\medskip

\noindent{\em 3. $\div\mathbb P_{k-2}(D;\mathbb S)=\mathbb P_{k-3}(D;\mathbb R^3)$ holds from Lemma~\ref{lem:divsymtensoronto}.} 

\medskip

\noindent{\em 4. $\inc\mathbb P_{k}(D;\mathbb S)=\mathbb P_{k-2}(D;\mathbb S)\cap\ker(\div)$.}

Obviously $\inc\mathbb P_{k}(D;\mathbb S)\subseteq\mathbb P_{k-2}(D;\mathbb S)\cap\ker(\div)$. Then it suffices to show the dimensions of these two subspaces are equal. Recall that for a linear operator $T$ defined on a finite dimensional linear space $V$, we have the relation
\begin{equation}\label{eq:dim}
\dim V = \dim \ker(T) + \dim \img(T). 
\end{equation}

As $\div$ is surjective shown in Step 3, by \eqref{eq:dim},
$$
\dim\mathbb P_{k-2}(D;\mathbb S)\cap\ker(\div)=\dim\mathbb P_{k-2}(D;\mathbb S)-\dim\mathbb P_{k-3}(D;\mathbb R^3)=\frac{1}{2}(k+4)(k^2-k).
$$
By results in Step 1 and 2, we count the dimension
\begin{align*}
\dim (\inc \mathbb P_{k}(D; \mathbb S)) &= \dim (\mathbb P_{k}(D; \mathbb S)) - \dim (\defm \mathbb P_{k+1}(D; \mathbb R^3)) \\
&= \dim (\mathbb P_{k}(D; \mathbb S)) - \dim (\mathbb P_{k+1}(D; \mathbb R^3)) + \dim \bs{RM} \\
& = \frac{1}{2}(k+4)(k^2-k).
\end{align*}
%
%
Then the desired result follows. 
\end{proof}

\subsection{Koszul elasticity complex}
 Recall the Koszul complex
{\small \begin{equation}\label{eq:Koszul}
0
\autorightarrow{}{}
\mathbb P_{k-2}(D) 
\autorightarrow{$\phi\boldsymbol  x$}{} 
\mathbb P_{k-1}(D;\mathbb R^3)
\autorightarrow{$\boldsymbol  u\times \boldsymbol  x$}{} 
\mathbb P_{k}(D;\mathbb R^3)
\autorightarrow{$\boldsymbol  v\cdot \boldsymbol  x$}{} \mathbb P_{k+1}(D) \longrightarrow 0.
\end{equation}}

Define operator $\boldsymbol \pi_{RM}: \mathcal C^1(D; \mathbb R^3)\to \boldsymbol{RM}$ as
\[
\boldsymbol \pi_{RM}\boldsymbol  v:=\boldsymbol  v(0,0,0)+\frac{1}{2}(\curl\boldsymbol  v)(0,0,0)\times\boldsymbol  x.
\]
By direct calculation $\nabla \times (\boldsymbol  a \times \boldsymbol  x) = 2\boldsymbol  a$ and definition of $\bs{RM}$ cf. \eqref{eq:RM}, it holds
\begin{equation}\label{eq:piRTprop}
\boldsymbol \pi_{RM}\boldsymbol  v=\boldsymbol  v\quad \forall~\boldsymbol  v\in\boldsymbol{RM}.
\end{equation}

\begin{lemma}
The following polynomial and operators sequences 
\begin{equation}\label{eq:elas3dKoszulcomplexPoly}
\resizebox{0.92\hsize}{!}{$
0\stackrel{\subset}{\longrightarrow}\mathbb P_{k-3}(D;\mathbb R^3) \stackrel{\sym(\boldsymbol v \boldsymbol x^{\intercal})}{\longrightarrow}
\mathbb P_{k-2}(D;\mathbb S) \stackrel{\boldsymbol x\times \boldsymbol\tau \times \boldsymbol x}{\longrightarrow}
\mathbb P_k(D;\mathbb S) \stackrel{\boldsymbol\tau\cdot \boldsymbol x}{\longrightarrow}
\mathbb P_{k+1}(D;\mathbb R^3)\stackrel{\boldsymbol \pi_{RM}}{\longrightarrow}\boldsymbol{RM}
\stackrel{}{\longrightarrow}0
$}
\end{equation}
is a complex and is exact.
\end{lemma}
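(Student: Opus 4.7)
My plan is to verify the complex property and then establish exactness. For the complex property I would check the three compositions vanish. The first, $\boldsymbol x\times\sym(\boldsymbol v\boldsymbol x^\intercal)\times\boldsymbol x$, vanishes because $\boldsymbol x\times\boldsymbol x = 0$ kills each of the two summands in $\sym(\boldsymbol v\boldsymbol x^\intercal) = \tfrac12(\boldsymbol v\boldsymbol x^\intercal + \boldsymbol x\boldsymbol v^\intercal)$. For the second, every row of $\boldsymbol x\times\boldsymbol\tau\times\boldsymbol x$ has the form $(\cdots)\times\boldsymbol x$, so its dot product with $\boldsymbol x$ is a scalar triple product with a repeated vector. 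For the third, set $\boldsymbol u := \boldsymbol\tau\cdot\boldsymbol x$; then $\boldsymbol u(0) = 0$, and identity~\eqref{eq:curlxtau} together with $\vskw\boldsymbol\tau = 0$ gives $\nabla\times\boldsymbol u = (\nabla\times\boldsymbol\tau)\cdot\boldsymbol x$, which also vanishes at the origin, so $\boldsymbol\pi_{RM}\boldsymbol u = 0$.

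The exactness at the two ends is straightforward. If $\sym(\boldsymbol v\boldsymbol x^\intercal) = 0$, taking the trace gives $\boldsymbol v\cdot\boldsymbol x = 0$, and applying the matrix to $\boldsymbol x$ on the right gives $\boldsymbol v|\boldsymbol x|^2 = 0$, so $\boldsymbol v = 0$. The surjectivity of $\boldsymbol\pi_{RM}$ onto $\boldsymbol{RM}$ is immediate from~\eqref{eq:piRTprop}.

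The bulk of the work is exactness at the two symmetric tensor stages, which I would establish by direct construction using the Koszul de Rham complex~\eqref{eq:Koszul}. At $\mathbb P_{k-2}(D;\mathbb S)$, the hypothesis $\boldsymbol x\times\boldsymbol\tau\times\boldsymbol x = 0$ forces the rows of $\boldsymbol x\times\boldsymbol\tau$ to be parallel to $\boldsymbol x$, so $\boldsymbol x\times\boldsymbol\tau = \boldsymbol f\boldsymbol x^\intercal$; associativity gives $\boldsymbol x\times(\boldsymbol\tau\cdot\boldsymbol x) = \boldsymbol f|\boldsymbol x|^2$, and dotting with $\boldsymbol x$ from the left yields $\boldsymbol f\cdot\boldsymbol x = 0$. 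Then~\eqref{eq:Koszul} furnishes $\boldsymbol v\in\mathbb P_{k-3}(D;\mathbb R^3)$ with $\boldsymbol f = \boldsymbol x\times\boldsymbol v$, so $\boldsymbol x\times(\boldsymbol\tau - \boldsymbol v\boldsymbol x^\intercal) = 0$; column-wise parallelism then produces $\boldsymbol\tau = \boldsymbol v\boldsymbol x^\intercal + \boldsymbol x\boldsymbol g^\intercal$, and the symmetry of $\boldsymbol\tau$ forces $\boldsymbol v - \boldsymbol g = \psi\boldsymbol x$, after which substitution yields $\boldsymbol\tau = \sym((2\boldsymbol v - \psi\boldsymbol x)\boldsymbol x^\intercal)$. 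At $\mathbb P_k(D;\mathbb S)$, the hypothesis $\boldsymbol\tau\cdot\boldsymbol x = 0$ and a row-wise application of~\eqref{eq:Koszul} give $\boldsymbol\tau = \boldsymbol M\times\boldsymbol x$ for some $\boldsymbol M\in\mathbb P_{k-1}(D;\mathbb M)$; symmetry of $\boldsymbol\tau$ combined with $\boldsymbol x\cdot\boldsymbol\tau = 0$ forces $(\boldsymbol x\cdot\boldsymbol M)\times\boldsymbol x = 0$, hence $\boldsymbol x\cdot\boldsymbol M = f\boldsymbol x^\intercal$. Using surjectivity of $\cdot\boldsymbol x$ in~\eqref{eq:Koszul}, I would pick $\boldsymbol p$ with $\boldsymbol p\cdot\boldsymbol x = -f$ and replace $\boldsymbol M$ by $\boldsymbol M + \boldsymbol p\boldsymbol x^\intercal$ (which preserves $\boldsymbol M\times\boldsymbol x = \boldsymbol\tau$) to achieve $\boldsymbol x\cdot\boldsymbol M = 0$. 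A column-wise application of~\eqref{eq:Koszul} then produces $\boldsymbol M = \boldsymbol x\times\boldsymbol\sigma$, whence $\boldsymbol\tau = \boldsymbol x\times\boldsymbol\sigma\times\boldsymbol x$; since $(\boldsymbol x\times\boldsymbol S\times\boldsymbol x)^\intercal = \boldsymbol x\times\boldsymbol S^\intercal\times\boldsymbol x$, symmetrizing $\boldsymbol\sigma$ yields the required preimage in $\mathbb P_{k-2}(D;\mathbb S)$.

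Finally, exactness at $\mathbb P_{k+1}(D;\mathbb R^3)$ follows from dimension counting: the alternating sum of dimensions along the Koszul elasticity sequence matches that of the elasticity complex~\eqref{eq:elascomplex3dPoly}, which vanishes by Lemma~\ref{lm:polycomplex}, so the alternating sum of cohomology dimensions vanishes, and combined with the four previously verified vanishings this forces the last cohomology to be trivial. The main obstacle will be exactness at $\mathbb P_k(D;\mathbb S)$: it requires two successive applications of Koszul de Rham in opposite matrix directions, separated by a fiber adjustment, while consistently tracking the symmetry constraint on $\boldsymbol\tau$.
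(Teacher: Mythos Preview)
Your proposal is correct and follows essentially the same route as the paper: verify the three compositions vanish, then establish exactness at $\mathbb P_{k-2}(D;\mathbb S)$ and $\mathbb P_k(D;\mathbb S)$ by iterated use of the Koszul de~Rham complex~\eqref{eq:Koszul}, and close with a dimension count at $\mathbb P_{k+1}(D;\mathbb R^3)$. Two small remarks where the paper is slightly more economical: (i) once you obtain $\boldsymbol\tau=\boldsymbol v\boldsymbol x^{\intercal}+\boldsymbol x\boldsymbol g^{\intercal}$ you can simply write $\boldsymbol\tau=\sym\boldsymbol\tau=\sym\big((\boldsymbol v+\boldsymbol g)\boldsymbol x^{\intercal}\big)$ without first proving $\boldsymbol v-\boldsymbol g=\psi\boldsymbol x$; (ii) at $\mathbb P_k(D;\mathbb S)$, instead of choosing $\boldsymbol p$ with $\boldsymbol p\cdot\boldsymbol x=-f$, the paper notes $\boldsymbol x\cdot(q\boldsymbol I)=q\boldsymbol x$, subtracts $q\boldsymbol I$ from $\boldsymbol M$, and observes that $q\boldsymbol I\times\boldsymbol x$ is skew so it drops out under $\sym$.
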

\begin{proof}
It is similar to the proof of Lemma \ref{lm:polycomplex} and symbolically replace $\nabla = (\partial_1, \partial_2, \partial_3)^{\intercal}$ by $\boldsymbol  x = (\boldsymbol  x_1, \boldsymbol  x_2, \boldsymbol  x_3)^{\intercal}$. 
We first verify \eqref{eq:elas3dKoszulcomplexPoly} is a complex. For any $\boldsymbol v\in\mathbb P_{k-3}(D;\mathbb R^3)$ and $\boldsymbol\tau\in\mathbb P_{k-2}(D;\mathbb S)$, we have
\begin{align*}
\boldsymbol x\times \sym(\boldsymbol v\boldsymbol x^{\intercal})\times \boldsymbol x &=\frac{1}{2}\boldsymbol x\times(\boldsymbol x\boldsymbol v^{\intercal} + \boldsymbol  v\boldsymbol  x^{\intercal})\times \boldsymbol  x=\boldsymbol0,\\
(\boldsymbol x\times\boldsymbol\tau \times \boldsymbol  x)\cdot \boldsymbol  x &=\boldsymbol 0.
\end{align*}
As $\boldsymbol\tau\in\mathbb P_{k}(D;\mathbb S)$, \eqref{eq:curlxtau} implies
$\nabla \times( \boldsymbol  \tau\cdot \boldsymbol  x) =(\nabla\times \boldsymbol\tau )\cdot \boldsymbol  x,$
we get
$\boldsymbol \pi_{RM}(\boldsymbol\tau\cdot \boldsymbol  x)= \boldsymbol  0.$
Thus \eqref{eq:elas3dKoszulcomplexPoly} is a complex.

We now verify the exactness.  

\medskip
\noindent{\em 1. If $\boldsymbol x\times\boldsymbol\tau \times \boldsymbol x=\boldsymbol  0$ and $\boldsymbol  \tau \in \mathbb P_{k-2}(D; \mathbb S)$, then $\boldsymbol  \tau = \sym (\boldsymbol  v\boldsymbol  x^{\intercal})$ for some $\boldsymbol  v\in \mathbb P_{k-3}(D;\mathbb R^3)$.}

For any $\boldsymbol \tau\in\mathbb P_{k-2}(D;\mathbb S)$ satisfying $\boldsymbol x\times(\boldsymbol\tau \times \boldsymbol x)=\boldsymbol  0$, by the exactness of Koszul complex \eqref{eq:Koszul}, there exists $\widetilde{\boldsymbol v}\in \mathbb P_{k-2}(D;\mathbb R^3)$ such that $\boldsymbol\tau \times \boldsymbol x=\boldsymbol x\widetilde{\boldsymbol v}$.
By \eqref{eq:trxtimes}, as $\bs \tau$ is symmetric, $\boldsymbol\tau \times \boldsymbol x$ is trace-free. Then it follows $\widetilde{\boldsymbol v}\cdot \boldsymbol  x = \tr(\boldsymbol  x\widetilde{\boldsymbol v} ) = \tr (\boldsymbol  \tau \times \boldsymbol  x) = 0$. Then there exists $\boldsymbol  v_1\in\mathbb P_{k-3}(D;\mathbb R^3)$ such that $\widetilde{\boldsymbol v}=\boldsymbol  v_1\times \boldsymbol x$. As a result, we have
$$
(\boldsymbol\tau-\boldsymbol x\boldsymbol  v_1)\times \boldsymbol x=\boldsymbol\tau \times \boldsymbol x-\boldsymbol x(\boldsymbol  v_1\times \boldsymbol x)= \boldsymbol\tau \times \boldsymbol x-\boldsymbol x \widetilde{\boldsymbol v} = 0.
$$
Again there exists $\boldsymbol v_2\in \mathbb P_{k-3}(D;\mathbb R^3)$ such that $\boldsymbol\tau=\boldsymbol x\boldsymbol v_1+\boldsymbol v_2\boldsymbol x$. By the symmetry of $\boldsymbol\tau$, it holds $\boldsymbol\tau=\sym(\boldsymbol  x(\boldsymbol v_1+\boldsymbol v_2))$. 

\medskip
\noindent{\em 2. If $\boldsymbol  \tau\cdot \boldsymbol  x = 0$ and $\boldsymbol  \tau \in \mathbb P_k(D;\mathbb S)$, then $\boldsymbol  \tau = \boldsymbol  x\times \bs\sigma \times \boldsymbol  x$ for some $\bs\sigma \in \mathbb P_{k-2}(D;\mathbb S)$.}

For any $\boldsymbol \tau\in\mathbb P_k(D;\mathbb S)$ satisfying $\boldsymbol \tau\cdot \boldsymbol x=\boldsymbol  0$, by the exactness of Koszul complex \eqref{eq:Koszul}, there exists $\boldsymbol \tau_1\in \mathbb P_{k-1}(D;\mathbb M)$ such that $\boldsymbol\tau=\boldsymbol\tau_1\times\boldsymbol x$. By the symmetry of $\boldsymbol \tau$, it holds
\[
(\boldsymbol  x\cdot \boldsymbol  \tau_1)\times\boldsymbol x= \boldsymbol  x\cdot (\boldsymbol  \tau_1\times\boldsymbol x)
=\boldsymbol  x\cdot \boldsymbol\tau=(\boldsymbol\tau\cdot \boldsymbol x)^{\intercal}= \boldsymbol0.
\]
Thus there exists $q\in\mathbb P_{k-1}(D)$ satisfying $\boldsymbol  x\cdot \boldsymbol  \tau_1=q\boldsymbol x$, i.e. $\boldsymbol  x\cdot (\boldsymbol\tau_1 -q\boldsymbol I)=\boldsymbol 0$. Again there exists $\boldsymbol\tau_2\in\mathbb P_{k-2}(D;\mathbb M)$ satisfying $\boldsymbol\tau_1=q\boldsymbol I + \boldsymbol x\times \boldsymbol\tau_2$. Hence
\[
\boldsymbol\tau=q\boldsymbol I\times\boldsymbol x + \boldsymbol x\times  \boldsymbol\tau_2 \times \boldsymbol x.
\]
It follows from the symmetry of $\boldsymbol\tau$ that
\begin{align*}
\boldsymbol\tau&=\sym\boldsymbol\tau=\sym(q\boldsymbol I\times\boldsymbol x + \boldsymbol x\times  \boldsymbol\tau_2 \times \boldsymbol x)= \sym(\boldsymbol x\times \boldsymbol\tau_2 \times \boldsymbol x) =\boldsymbol x\times\sym\boldsymbol\tau_2 \boldsymbol  \times \boldsymbol  x.
\end{align*}
Here we use the fact that $\boldsymbol x\times\skw\boldsymbol\tau_2\times \boldsymbol  x \in\mathbb P_{k}(D;\mathbb K)$.

\medskip
\noindent {\rm 3. $\mathbb P_{k}(D;\mathbb S) \cdot \boldsymbol  x = \mathbb P_{k+1}(D;\mathbb R^3)\cap \ker(\pi_{RM})$.}

As a result of step 1,
\begin{align*}
\dim(\boldsymbol x\times\mathbb P_{k-2}(D;\mathbb S)\times \boldsymbol  x)=\dim\mathbb P_{k-2}(D;\mathbb S)-\dim\mathbb P_{k-3}(D;\mathbb R^3) =\frac{1}{2}k(k-1)(k+4). 
\end{align*}
Then we get from step 2 that 
\begin{align}
\dim(\mathbb P_{k}(D;\mathbb S)\cdot \boldsymbol x)&=\dim\mathbb P_{k}(D;\mathbb S)-\dim(\boldsymbol x\times\mathbb P_{k-2}(D;\mathbb S) \times \boldsymbol  x)\notag\\
&=(k+3)(k+2)(k+1)-\frac{1}{2}k(k-1)(k+4) \notag\\
&=\frac{1}{2}(k+4)(k+3)(k+2)-6. \label{eq:20200506-2}
\end{align}
It follows from \eqref{eq:piRTprop} that $\boldsymbol \pi_{RM}\mathbb P_{k+1}(D;\mathbb R^3)=\boldsymbol{RM}$, and by \eqref{eq:20200506-2},
\[
\dim(\mathbb P_{k}(D;\mathbb S)\cdot \boldsymbol x)+\dim\boldsymbol{RM}=\dim\mathbb P_{k+1}(D;\mathbb R^3).
\]
Therefore the complex \eqref{eq:elas3dKoszulcomplexPoly} is exact.
\end{proof}

\begin{remark}
 Another Koszul elasticity complex is constructed in~\cite[Section~3.2]{ChristiansenHuSande2020} by using different Koszul operators which satisfies homotopy identities. Ours is simpler and sufficient to derive the required decomposition.    
\end{remark}
%

\subsection{Decomposition of polynomial tensor spaces}

Combining the two complexes \eqref{eq:elascomplex3dPoly} and \eqref{eq:elas3dKoszulcomplexPoly} yields
\begin{equation}\label{eq:elascomplex3dPolydouble}
\resizebox{.92\hsize}{!}{$
\xymatrix{
\boldsymbol{RM}\ar@<0.4ex>[r]^-{\subset} & \mathbb P_{k+1}(D;\mathbb R^3)\ar@<0.4ex>[r]^-{\defm}\ar@<0.4ex>[l]^-{\pi_{RM}} & \mathbb P_{k}(D;\mathbb S)\ar@<0.4ex>[r]^-{\inc}\ar@<0.4ex>[l]^-{\boldsymbol\tau\cdot \boldsymbol x}  & \mathbb P_{k-2}(D;\mathbb S) \ar@<0.4ex>[r]^-{{\div}}\ar@<0.4ex>[l]^-{\boldsymbol x\times\boldsymbol\tau \times \boldsymbol x} & \mathbb P_{k-3}(D;\mathbb R^3)  \ar@<0.4ex>[r]^-{} \ar@<0.4ex>[l]^-{\sym(\boldsymbol v \boldsymbol x^{\intercal})}
& 0 \ar@<0.4ex>[l]^-{\supset} }.
$}
\end{equation}
Although no homotopy identity, from \eqref{eq:elascomplex3dPolydouble}, we can derive the following space decompositions which play an vital role in the design of degree of freedoms. 
\begin{lemma}
We have the following space decompositions
\begin{align}
\label{eq:polydecomp0}
\mathbb P_{k+1}(D;\mathbb R^3)&=(\mathbb P_k(D;\mathbb S)\cdot \boldsymbol x)\oplus\boldsymbol{RM},\\
\label{eq:polydecomp1}
\mathbb P_k(D;\mathbb S ) &=\defm\mathbb P_{k+1}(D;\mathbb R^3)\oplus(\boldsymbol x\times\mathbb P_{k-2}(D;\mathbb S)\times \boldsymbol  x),\\
\label{eq:polydecomp2}
\mathbb P_{k-2}(D;\mathbb S)&=\inc \mathbb P_k(D;\mathbb S)\oplus \sym(\mathbb P_{k-3}(D;\mathbb R^3)\boldsymbol  x).
\end{align}
\end{lemma}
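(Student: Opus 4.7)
The plan is to establish each decomposition in two steps: first verify that the dimensions of the two summands add up to that of the ambient space, then show the intersection of the two summands is trivial. The dimension count follows immediately from the two exact complexes \eqref{eq:elascomplex3dPoly} and \eqref{eq:elas3dKoszulcomplexPoly}: exactness of \eqref{eq:elascomplex3dPoly} gives the alternating-sum identity $\dim\mathbb P_k(D;\mathbb S) = \dim\mathbb P_{k+1}(D;\mathbb R^3) - \dim\bs{RM} + \dim\mathbb P_{k-2}(D;\mathbb S) - \dim\mathbb P_{k-3}(D;\mathbb R^3)$; Steps 1 and 3 of the proof of \eqref{eq:elas3dKoszulcomplexPoly} supply $\dim(\boldsymbol x\times\mathbb P_{k-2}(D;\mathbb S)\times\boldsymbol x)=\dim\mathbb P_{k-2}(D;\mathbb S)-\dim\mathbb P_{k-3}(D;\mathbb R^3)$ and $\dim(\mathbb P_k(D;\mathbb S)\cdot\boldsymbol x)=\dim\mathbb P_{k+1}(D;\mathbb R^3)-\dim\bs{RM}$; injectivity of $\sym(\boldsymbol v\boldsymbol x^{\intercal})$ (equivalently Lemma~\ref{lem:divsymtensoronto}) gives $\dim\sym(\mathbb P_{k-3}(D;\mathbb R^3)\boldsymbol x)=\dim\mathbb P_{k-3}(D;\mathbb R^3)$. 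A short bookkeeping check then confirms dimension matching for all three decompositions.

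For \eqref{eq:polydecomp0}, suppose $\boldsymbol\tau\cdot\boldsymbol x\in\bs{RM}$; applying $\boldsymbol\pi_{RM}$, which is the identity on $\bs{RM}$ by \eqref{eq:piRTprop} and satisfies $\boldsymbol\pi_{RM}(\boldsymbol\tau\cdot\boldsymbol x)=\boldsymbol 0$ (established inside the proof of \eqref{eq:elas3dKoszulcomplexPoly}), forces $\boldsymbol\tau\cdot\boldsymbol x=\boldsymbol 0$. For \eqref{eq:polydecomp2}, if $\inc\boldsymbol\sigma=\sym(\boldsymbol v\boldsymbol x^{\intercal})$, applying $\div$ and using $\div\,\inc=0$ (from \eqref{eq:elascomplex3dPoly} being a complex) gives $\div\sym(\boldsymbol v\boldsymbol x^{\intercal})=\boldsymbol 0$, whence Lemma~\ref{lem:divsymtensoronto} forces $\sym(\boldsymbol v\boldsymbol x^{\intercal})=\boldsymbol 0$.

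The main obstacle is the direct sum \eqref{eq:polydecomp1}. Suppose $\defm\boldsymbol v=\boldsymbol x\times\boldsymbol\sigma\times\boldsymbol x$. Right-multiplying by $\boldsymbol x$ and invoking $(\boldsymbol r\times\boldsymbol x)\cdot\boldsymbol x=0$ row-wise yields $(\defm\boldsymbol v)\cdot\boldsymbol x=\boldsymbol 0$; writing this in components produces the identity
\[
(\boldsymbol x\cdot\nabla)\boldsymbol v+\nabla(\boldsymbol x\cdot\boldsymbol v)-\boldsymbol v=\boldsymbol 0.
\]
I would then dot with $\boldsymbol x$ and use the product rule $(\boldsymbol x\cdot\nabla)(\boldsymbol x\cdot\boldsymbol v)=\boldsymbol x\cdot\boldsymbol v+\boldsymbol x\cdot[(\boldsymbol x\cdot\nabla)\boldsymbol v]$ to obtain $(\boldsymbol x\cdot\nabla-1)(\boldsymbol x\cdot\boldsymbol v)=0$; Euler's identity on homogeneous components then forces $\boldsymbol x\cdot\boldsymbol v$ to be homogeneous of degree one, say $\boldsymbol d\cdot\boldsymbol x$ for a constant vector $\boldsymbol d$. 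Substituting $\nabla(\boldsymbol x\cdot\boldsymbol v)=\boldsymbol d$ back into the identity and matching homogeneous degrees in $\boldsymbol v$ kills every part of degree $\ge 2$ and pins the degree-zero part to $\boldsymbol d$, giving $\boldsymbol v=\boldsymbol d+M\boldsymbol x$ for some $3\times 3$ matrix $M$. Finally $\boldsymbol x^{\intercal}M\boldsymbol x=0$ for all $\boldsymbol x$ (forced by $\boldsymbol x\cdot\boldsymbol v=\boldsymbol d\cdot\boldsymbol x$) gives $M\in\mathbb K$, so $\boldsymbol v\in\bs{RM}$ and hence $\defm\boldsymbol v=\boldsymbol 0$, completing the proof.
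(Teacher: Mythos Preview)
Your proof is correct and follows essentially the same route as the paper's: for \eqref{eq:polydecomp1} you derive the same identity $(\boldsymbol x\cdot\nabla)\boldsymbol v+\nabla(\boldsymbol x\cdot\boldsymbol v)=\boldsymbol v$ from $(\defm\boldsymbol v)\cdot\boldsymbol x=\boldsymbol 0$, dot with $\boldsymbol x$ to conclude $\boldsymbol x\cdot\boldsymbol v$ is linear, and then feed this back to force $\boldsymbol v\in\boldsymbol{RM}$, exactly as the paper does; the arguments for \eqref{eq:polydecomp0} and \eqref{eq:polydecomp2} likewise match the paper's use of $\boldsymbol\pi_{RM}$ and Lemma~\ref{lem:divsymtensoronto}. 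The only difference is cosmetic: you front-load the dimension bookkeeping via the alternating-sum identity from exactness, whereas the paper appeals to it more tersely.
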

\begin{proof}
The decomposition \eqref{eq:polydecomp0} is trivial by the exactness of \eqref{eq:elas3dKoszulcomplexPoly}. 

For any $\boldsymbol q\in\mathbb P_{k+1}(D;\mathbb R^3)$ satisfying $\defm\boldsymbol q\in\boldsymbol x\times\mathbb P_{k-2}(D;\mathbb S)\times \boldsymbol  x$, we have
\[
(\nabla\boldsymbol q+(\nabla\boldsymbol q)^{\intercal})\cdot \boldsymbol x=2(\defm\boldsymbol q)\cdot\boldsymbol x=\boldsymbol 0.
\]
Since $(\nabla\boldsymbol q)\boldsymbol x=\nabla(\boldsymbol x^{\intercal}\boldsymbol q)-\boldsymbol q$, we get
\begin{equation}\label{eq:20200509}
(\nabla\boldsymbol q)^{\intercal}\cdot\boldsymbol  x + \nabla(\boldsymbol x^{\intercal}\boldsymbol q)=\boldsymbol q.
\end{equation}
Noting that
\[
\boldsymbol x\cdot(\nabla\boldsymbol q)^{\intercal}\cdot\boldsymbol  x=\boldsymbol x\cdot(\defm\boldsymbol q)\cdot\boldsymbol  x=0,
\]
we obtain from \eqref{eq:20200509} that
\[
(\boldsymbol x\cdot\nabla)(\boldsymbol x^{\intercal}\boldsymbol q)=\boldsymbol x^{\intercal}\boldsymbol q.
\]
Hence $\boldsymbol x^{\intercal}\boldsymbol q$ is a linear function. In turn, it follows from \eqref{eq:20200509} that $\boldsymbol q\in\mathbb P_{1}(D;\mathbb R^3)$, which together with the fact $\boldsymbol x^{\intercal}\boldsymbol q$ is linear implies $\boldsymbol q\in\boldsymbol{RM}$. Thus \eqref{eq:polydecomp1} follows from the fact that the dimensions on two sides of \eqref{eq:polydecomp1} coincide.

By Lemma~\ref{lem:divsymtensoronto}, the sum in \eqref{eq:polydecomp2} is a direct sum. Thus the decomposition \eqref{eq:polydecomp2} follows.
\end{proof}

\subsection{Polynomial complexes in two dimensions}
We have similar polynomial complexes in two dimensions. Here we collect some which will appear as the trace complex on face $F$ of a polyhedron. Let $\boldsymbol  n$ be a normal vector of $F$. For $\boldsymbol  x\in F$, denote by $\boldsymbol  x^{\bot} = \boldsymbol  n\times \boldsymbol  x$. 
Set $\boldsymbol{RT}:=\mathbb P_0(F;\mathbb R^2)+\boldsymbol  x\mathbb P_0(F)$.
For a scalar function $v$,
$$
\pi_1v:=v(0,0)+\boldsymbol  x\cdot\nabla_Fv(0,0).
$$
Again, here without loss of generality, we assume $(0,0)\in F$ and in general the $\boldsymbol  x$ in the results presented below can be replaced by $\boldsymbol  x - \boldsymbol  x_c$ with an arbitrary $\boldsymbol  x_c\in F$.  

The following $\div\div$ polynomial complexes has been established in~\cite{ChenHuang2020}:
\begin{equation}\label{eq:divdivcomplexPolydouble}
\xymatrix{
\boldsymbol{RT}\ar@<0.4ex>[r]^-{\subset} & \; \mathbb P_{k+1}(F;\mathbb R^2)\; \ar@<0.4ex>[r]^-{\sym\curl_F}\ar@<0.4ex>[l]^-{\boldsymbol x}  & \; \mathbb P_k(F;\mathbb S) \ar@<0.4ex>[r]^-{\div_F{\div}_F}\; \ar@<0.4ex>[l]^-{\boldsymbol  \tau\cdot \boldsymbol x^{\bot}} & \; \mathbb P_{k-2}(F)  \; \ar@<0.4ex>[r]^-{} \ar@<0.4ex>[l]^-{ \boldsymbol x \boldsymbol x^{\intercal}v}
& 0 \ar@<0.4ex>[l]^-{\supset} },
\end{equation}
which implies the following decomposition
\begin{itemize}
 \item $\mathbb P_{k+2}(F; \mathbb R^2)= (\mathbb P_{k+1}(F; \mathbb S)\cdot\boldsymbol x^{\bot})\oplus\boldsymbol{RT}.$
 \smallskip 
 \item $\displaystyle
\mathbb P_{k}(F; \mathbb S)=\sym\curl_F \, \mathbb P_{k+1}(F; \mathbb R^2) \oplus \mathbb P_{k-2}(F)\boldsymbol  x\boldsymbol  x^{\intercal}.
$

 \smallskip
 \item $\div_F\div_F: \mathbb P_{k-2}(F)\boldsymbol  x\boldsymbol  x^{\intercal}\to\mathbb P_{k-2}(F)$ is a bijection.
\end{itemize}
The following two dimensional Hessian polynomial complex and its Koszul complex can be also found in~\cite[Section 3.1]{ChenHuang2020}
\begin{equation}\label{eq:hessiancomplexPolydouble}
\xymatrix{
\mathbb P_1(F)\ar@<0.4ex>[r]^-{\subset} & \; \mathbb P_{k+2}(F)\; \ar@<0.4ex>[r]^-{\nabla_F^2}\ar@<0.4ex>[l]^-{\pi_1}  & \; \mathbb P_k(F;\mathbb S) \ar@<0.4ex>[r]^-{\rot_F}\; \ar@<0.4ex>[l]^-{\boldsymbol  x\cdot \boldsymbol  \tau \cdot \boldsymbol x} & \; \mathbb P_{k-1}(F; \mathbb R^2)  \; \ar@<0.4ex>[r]^-{} \ar@<0.4ex>[l]^-{\sym(\boldsymbol  x^{\perp}\boldsymbol  v )}
& 0 \ar@<0.4ex>[l]^-{\supset} }.
\end{equation}
The implied decompositions are
\begin{itemize}
 \item $\mathbb P_{k+2}(F)= (\boldsymbol x\cdot\mathbb P_{k}(F; \mathbb S) \cdot \boldsymbol  x) \oplus\mathbb P_1(F).$
 \smallskip 
 \item $\displaystyle
\mathbb P_{k}(F; \mathbb S)=\nabla_F^2 \, \mathbb P_{k+2}(F) \oplus \sym (\boldsymbol x^{\perp}\mathbb P_{k-1}(F; \mathbb R^2)).
$

 \smallskip
 \item $ \rot_F: \sym (\boldsymbol x^{\perp}\mathbb P_{k-1}(F; \mathbb R^2))\to\mathbb P_{k-1}(F; \mathbb R^2)$ is a bijection.
\end{itemize}

\section{Traces and Bubble Complexes}
Besides the decomposition of polynomial spaces, another key of our construction  is the characterization of the trace operator. We first derive a symmetric form of Green's identity for the $\inc$ operator from which we define two traces. We show the traces of spaces in the elasticity complex form two complexes on each face and will call them trace complexes. On the other hand, the kernel of traces in the polynomial space are called bubble function spaces which also form a complex and is called the bubble complexes. We also present several bubble complexes on each face. 

When define and study the traces, we consider smooth enough functions not in the most general Sobolev spaces setting. The precise Sobolev spaces for the traces of the $\inc$ operator are not easy to identity and not necessary as the shape function is polynomial which is smooth inside one element. 
 
\subsection{Green's identity}
Consider $\boldsymbol  \sigma, \boldsymbol   \tau \in\boldsymbol  H^2(K;\mathbb S)$. By the symbolical symmetry, we expect the following symmetric form of Green's identity
\begin{equation*}
(\nabla \times\boldsymbol  \sigma \times \nabla, \boldsymbol  \tau)_K - (\nabla \times \boldsymbol  \tau \times \nabla, \boldsymbol  \sigma)_K = ({\rm tr}_1(\boldsymbol  \sigma), {\rm tr}_2(\boldsymbol  \tau))_{\partial K} - ({\rm tr}_1(\boldsymbol  \tau), {\rm tr}_2(\boldsymbol  \sigma))_{\partial K},
\end{equation*}
which belongs to a class of second Green's identities. For the scalar Laplacian operator, it reads as: for $u, v\in H^2(K)$,
$$
- (\Delta u, v)_K + (\Delta v, u)_K = ({\rm tr}_1(u), {\rm tr}_2(v))_{\partial K} - ({\rm tr}_1(v), {\rm tr}_2(u))_{\partial K}.
$$
where ${\rm tr}_1(u) = u$ is the Dirichlet trace and ${\rm tr}_2(v) = \partial_n v$ is the Neumann trace. 
For the $\curl$ operator, we have a similar formula: for $\boldsymbol  u, \boldsymbol  v\in \boldsymbol  H^2(K)$,
$$
(\nabla \times \nabla \times \boldsymbol  u, \boldsymbol  v)_K - (\nabla \times \nabla \times \boldsymbol  v, \boldsymbol  u)_K =  - ({\rm tr}_1(\boldsymbol  u), {\rm tr}_2(\boldsymbol  v))_{\partial K}+({\rm tr}_1(\boldsymbol  v),{\rm tr}_2(\boldsymbol  u))_{\partial K}.
$$
where ${\rm tr}_1(\boldsymbol  u) = (\boldsymbol  n\times \boldsymbol  u)\times \boldsymbol  n $ is the tangential component of $\boldsymbol  u$ (Dirichlet type) and ${\rm tr}_2(\boldsymbol  u) = \boldsymbol  n\times (\nabla \times \boldsymbol  u)$ is the Neumann type trace. 


As $\boldsymbol  \sigma$ is symmetric, $(\nabla\times \boldsymbol  \sigma)^{\intercal} = - \boldsymbol  \sigma \times \nabla$. Therefore $(\nabla \times (\cdot), (\cdot)\times \nabla)$ is a  symmetric bilinear form on $\boldsymbol  H^1(K,\mathbb S)$, i.e.,
   $$
   (\nabla \times \boldsymbol   \sigma, \boldsymbol  \tau \times \nabla)_K = ( \nabla \times \boldsymbol  \tau, \boldsymbol  \sigma \times \nabla)_K.
   $$
Applying integration by parts, we have
\begin{align}
\label{eq:firstsigma}   (\nabla \times \boldsymbol  \sigma, \boldsymbol  \tau \times \nabla)_K = (\nabla \times\boldsymbol  \sigma \times \nabla, \boldsymbol  \tau)_K + (\nabla \times \boldsymbol  \sigma, \boldsymbol  \tau\times \boldsymbol   n)_{\partial K},\\
\label{eq:firsttau}   (\nabla \times \boldsymbol  \tau,\boldsymbol  \sigma \times \nabla)_K = (\nabla \times \boldsymbol  \tau \times \nabla,\boldsymbol  \sigma)_K + (\nabla \times \boldsymbol  \tau, \boldsymbol  \sigma\times \boldsymbol   n)_{\partial K}.
\end{align}
The difference between \eqref{eq:firstsigma} and \eqref{eq:firsttau} implies the first symmetric Green's identity
\begin{equation*}
   (\nabla \times \bs\sigma \times \nabla, \boldsymbol  \tau)_K - (\boldsymbol  \sigma,\nabla \times \bs\tau \times \nabla)_K =  (\boldsymbol  \sigma\times \boldsymbol  n, \nabla \times \boldsymbol  \tau)_{\partial K} - (\nabla \times \boldsymbol  \sigma, \boldsymbol  \tau\times \boldsymbol   n)_{\partial K}.
\end{equation*}
But in this form, the trace $\boldsymbol  \sigma\times \boldsymbol  n$ and $\nabla \times \boldsymbol  \sigma$ are still linearly dependent. 

We further expand the boundary term into tangential and normal parts
   $$
   (\boldsymbol  \sigma \times \boldsymbol  n, \nabla \times \boldsymbol  \tau)_{\partial K} = (\boldsymbol  n \times\boldsymbol  \sigma \times \boldsymbol  n, \boldsymbol  n \times (\nabla \times \boldsymbol  \tau))_{\partial K} + (\boldsymbol  n \cdot\boldsymbol  \sigma \times \boldsymbol  n, \boldsymbol  n\cdot (\nabla \times \boldsymbol  \tau))_{\partial K}.
   $$
Recall that, on one face $F\in \partial K$,
   $\boldsymbol  n\cdot (\nabla \times \boldsymbol  \tau) = \nabla_F^{\bot}\cdot \Pi_{F}\boldsymbol  \tau = -\nabla_F\cdot (\boldsymbol  n\times \boldsymbol  \tau)$.  Then integration by parts on face $F$, we get 
   \begin{align*}
   (\boldsymbol  n \cdot\boldsymbol  \sigma \times \boldsymbol  n, \nabla_F\cdot (\boldsymbol  n\times \boldsymbol  \tau))_{F} &= - (\nabla_F(\boldsymbol  n\cdot\boldsymbol  \sigma \times \boldsymbol  n), \boldsymbol  n\times \boldsymbol  \tau)_F \\
   &\quad + \sum_{e\in \partial F}(\boldsymbol  n \cdot\boldsymbol  \sigma \times \boldsymbol  n, \boldsymbol  n_{F,e}\cdot (\boldsymbol  n\times \boldsymbol  \tau))_{e}\\
   &= (\nabla_F(\boldsymbol  n\cdot\boldsymbol  \sigma \ \Pi_F), \boldsymbol  n\times \boldsymbol  \tau \times \boldsymbol  n)_F \\
   &\quad - \sum_{e\in \partial F}(\boldsymbol  n \cdot\boldsymbol  \sigma \times \boldsymbol  n, \boldsymbol  t_{F,e}\cdot \boldsymbol  \tau)_{e},
   \end{align*}
   where recall that $\boldsymbol  t_{F,e} = \boldsymbol  n\times \boldsymbol  n_{F,e}$.
Therefore we can write the boundary term as    
\begin{align*}
   (\boldsymbol  \sigma \times \boldsymbol  n, \nabla \times \boldsymbol  \tau)_{\partial K} &= (\boldsymbol  n \times\boldsymbol  \sigma \times \boldsymbol  n, \boldsymbol  n \times (\nabla \times \boldsymbol  \tau))_{\partial K} \\
   &- \sum_{F\in \mathcal F(K)}(\nabla_F(\boldsymbol  n\cdot\boldsymbol  \sigma \ \Pi_F), \boldsymbol  n\times \boldsymbol  \tau \times \boldsymbol  n)_F\\
   &\quad + \sum_{F\in \mathcal F(K)}\sum_{e\in \partial F}(\boldsymbol  n \cdot\boldsymbol  \sigma \times \boldsymbol  n, \boldsymbol  t_{e}\cdot \boldsymbol  \tau)_{e},
\end{align*}
   and by symmetry
\begin{align*}
   (\boldsymbol  \tau \times \boldsymbol  n, \nabla \times \boldsymbol  \sigma)_{\partial K} &= (\boldsymbol  n \times \boldsymbol  \tau \times \boldsymbol  n, \boldsymbol  n \times (\nabla \times\boldsymbol   \sigma))_{\partial K} \\
   &- \sum_{F\in \mathcal F(K)}(\nabla_F(\boldsymbol  n\cdot \boldsymbol  \tau \ \Pi_F), \boldsymbol  n\times\boldsymbol  \sigma \times\boldsymbol   n)_F\\
   &\quad + \sum_{F\in \mathcal F(K)}\sum_{e\in \partial F}(\boldsymbol  n \cdot\boldsymbol  \tau \times \boldsymbol  n, \boldsymbol  t_{e}\cdot \boldsymbol  \sigma)_{e}.
\end{align*}
The difference of these two terms suggests us to define
\begin{align*}
{\rm tr}_1(\boldsymbol  \tau) &:=  \boldsymbol  n\times\boldsymbol  \tau \times \boldsymbol  n, \\
\widetilde{{\rm tr}}_2( \boldsymbol  \tau) &:= \boldsymbol  n\times (\nabla \times \boldsymbol  \tau)\Pi_F + \nabla_F(\boldsymbol  n\cdot \boldsymbol  \tau \ \Pi_F).
\end{align*}
We can simplify the trace $\widetilde{{\rm tr}}_2(\boldsymbol  \tau)$ as follows. Apply $\Pi_F(\cdot)\Pi_F$ to the tangential trace of $\nabla \times \boldsymbol  \tau$ cf. \eqref{eq:tangentialtrace} to get
\begin{equation}\label{eq:tangentialcurl}
\Pi_F(\boldsymbol   n \times (\nabla \times \boldsymbol  \tau)) \Pi_F = \nabla_F (\boldsymbol  n\cdot \boldsymbol  \tau \Pi_F) - \Pi_F \partial_n \boldsymbol  \tau \Pi_F.
\end{equation}
   Because  $\widetilde{{\rm tr}}_2(\boldsymbol  \tau)$ is integrated on the face with a tangential symmetric matrix $\boldsymbol  n \times\boldsymbol  \sigma \times \boldsymbol  n$, it can be further simplified to $\sym \widetilde{{\rm tr}}_2( \boldsymbol  \tau)$. Therefore we define
\begin{equation}\label{eq:tr2}
   {\rm tr}_2(\boldsymbol  \tau) := \sym \widetilde{{\rm tr}}_2( \boldsymbol  \tau) = 2{\rm def}_F(\boldsymbol  n\cdot \boldsymbol  \tau \, \Pi_F) - \Pi_F \partial_n \, \boldsymbol  \tau \, \Pi_F,
\end{equation}
which is a symmetric matrix on each face. Such trace has been identified in~\cite{ArnoldAwanouWinther2008}.

We present another form of ${\rm tr}_2$ which is obtained by taking the transpose of the second term in $\widetilde{\rm tr}_2(\boldsymbol  \tau)$ and more useful than \eqref{eq:tr2}.
\begin{lemma}
For any sufficiently smooth and symmetric tensor $\bs\tau$, it holds
\begin{align}\label{eq:tr2prop1}
{\rm tr}_2(\boldsymbol  \tau) &=\boldsymbol  n\times(\nabla\times\bs\tau)\Pi_F + (\Pi_F\boldsymbol  \tau\cdot \boldsymbol  n  )\nabla_F \\
\label{eq:tr2prop2}&= \Pi_F(\boldsymbol  \tau\times\nabla)\times\boldsymbol  n + \nabla_F(\boldsymbol  n\cdot \boldsymbol  \tau \ \Pi_F).
\end{align}
\end{lemma}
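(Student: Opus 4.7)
The plan is to derive \eqref{eq:tr2prop1} directly from the definition \eqref{eq:tr2} by rewriting one of the two symmetrized terms via \eqref{eq:tangentialcurl}, and then obtain \eqref{eq:tr2prop2} by exploiting symmetry of $\operatorname{tr}_2(\boldsymbol{\tau})$ together with the transpose rule for cross products.

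First I would expand the symmetric deformation as
\[
2\operatorname{def}_F(\boldsymbol{n}\cdot\boldsymbol{\tau}\,\Pi_F) = \nabla_F(\boldsymbol{n}\cdot\boldsymbol{\tau}\,\Pi_F) + \bigl[\nabla_F(\boldsymbol{n}\cdot\boldsymbol{\tau}\,\Pi_F)\bigr]^{\intercal},
\]
and use the symmetry of $\boldsymbol{\tau}$ and $\Pi_F$ to identify the transpose as $(\Pi_F\boldsymbol{\tau}\cdot\boldsymbol{n})\nabla_F$. Substituting this into \eqref{eq:tr2} gives
\[
\operatorname{tr}_2(\boldsymbol{\tau}) = \bigl[\nabla_F(\boldsymbol{n}\cdot\boldsymbol{\tau}\,\Pi_F) - \Pi_F\partial_n\boldsymbol{\tau}\,\Pi_F\bigr] + (\Pi_F\boldsymbol{\tau}\cdot\boldsymbol{n})\nabla_F.
\]
By \eqref{eq:tangentialcurl}, the bracketed expression equals $\Pi_F\bigl(\boldsymbol{n}\times(\nabla\times\boldsymbol{\tau})\bigr)\Pi_F$. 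Since every column of $\boldsymbol{n}\times\boldsymbol{A}$ is orthogonal to $\boldsymbol{n}$, the left projection $\Pi_F$ may be dropped, and the bracket reduces to $\boldsymbol{n}\times(\nabla\times\boldsymbol{\tau})\Pi_F$. This yields \eqref{eq:tr2prop1}.

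For \eqref{eq:tr2prop2}, I would take the transpose of \eqref{eq:tr2prop1} and use that $\operatorname{tr}_2(\boldsymbol{\tau})$ is a symmetric matrix. Applying the transpose rules for matrix-vector cross products (reverse order, negate) together with the identity $(\nabla\times\boldsymbol{\tau})^{\intercal} = -\boldsymbol{\tau}\times\nabla$ (valid because $\boldsymbol{\tau}$ is symmetric), one gets
\[
[\boldsymbol{n}\times(\nabla\times\boldsymbol{\tau})\Pi_F]^{\intercal} = \Pi_F(\boldsymbol{\tau}\times\nabla)\times\boldsymbol{n},
\]
while the transpose of $(\Pi_F\boldsymbol{\tau}\cdot\boldsymbol{n})\nabla_F$ returns $\nabla_F(\boldsymbol{n}\cdot\boldsymbol{\tau}\,\Pi_F)$. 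Adding these gives \eqref{eq:tr2prop2}.

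The only genuinely delicate point is bookkeeping: keeping track of signs and orders when transposes interact with cross products, and justifying the absorption/omission of projections such as $\Pi_F\,\boldsymbol{n}\times(\cdot)\Pi_F = \boldsymbol{n}\times(\cdot)\Pi_F$ and $\Pi_F(\boldsymbol{\tau}\times\nabla)\times\boldsymbol{n}\,\Pi_F = \Pi_F(\boldsymbol{\tau}\times\nabla)\times\boldsymbol{n}$, which rest on the observation that vectors of the form $\boldsymbol{v}\times\boldsymbol{n}$ and $\boldsymbol{n}\times\boldsymbol{v}$ are automatically tangential to $F$. Once this is carefully done, both equalities follow in a few lines without any additional input beyond \eqref{eq:tr2}, \eqref{eq:tangentialcurl}, and the identity $\curl\boldsymbol{\tau}=-\boldsymbol{\tau}\times\nabla$ for symmetric $\boldsymbol{\tau}$ recorded in Section~2.
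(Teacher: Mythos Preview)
Your argument is correct and uses the same ingredients as the paper's proof: the identity \eqref{eq:tangentialcurl}, the symmetry of $\boldsymbol{\tau}$, and a transpose to pass between \eqref{eq:tr2prop1} and \eqref{eq:tr2prop2}. The only minor organizational difference is that you start from the explicit form \eqref{eq:tr2}, split $2\operatorname{def}_F(\boldsymbol{n}\cdot\boldsymbol{\tau}\,\Pi_F)$ into its two summands, and apply \eqref{eq:tangentialcurl} directly to one of them, whereas the paper first transposes \eqref{eq:tangentialcurl} to obtain the companion identity \eqref{eq:columncurl}, subtracts, and then invokes $\operatorname{tr}_2=\sym\widetilde{\operatorname{tr}}_2$; your route is slightly more direct but not substantively different.
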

\begin{proof}
We take the transpose of \eqref{eq:tangentialcurl} and use the symmetry of $\boldsymbol  \tau$ to get
\begin{equation}\label{eq:columncurl}
\Pi_F ((\boldsymbol  \tau \times \nabla)\times \boldsymbol  n) \Pi_F = (\Pi_F \boldsymbol  \tau \cdot \boldsymbol  n)\nabla_F - \Pi_F\partial_n \boldsymbol  \tau\Pi_F.
\end{equation}
The difference of \eqref{eq:columncurl} and \eqref{eq:tangentialcurl} implies
$$
\Pi_F (\boldsymbol  n\times (\nabla \times \boldsymbol  \tau)) \Pi_F + (\Pi_F\boldsymbol  \tau\cdot \boldsymbol  n)\nabla_F = \Pi_F ((\boldsymbol  \tau \times \nabla)\times \boldsymbol  n) \Pi_F + \nabla_F (\boldsymbol  n \cdot \boldsymbol  \tau \Pi_F).
$$
As ${\rm tr}_2(\boldsymbol  \tau) = \sym \widetilde{\rm tr}_2(\boldsymbol  \tau)$, we obtain \eqref{eq:tr2prop1}. As $\boldsymbol  \tau$ is symmetric, taking transpose, we obtain \eqref{eq:tr2prop2}.
\end{proof}
 
We are in the position to summarize the symmetric form of Green's identity.
\begin{theorem}[Symmetric Green's identity for the $\inc$ operator] Let $K$ be a polyhedron, and let $\boldsymbol \sigma, \boldsymbol  \tau\in \boldsymbol  H^2(K; \mathbb S)$. Then we have
 \begin{align}
   (\nabla \times \bs\sigma \times \nabla, \boldsymbol  \tau)_K - (\boldsymbol  \sigma,\nabla \times \bs\tau \times \nabla)_K &= ({\rm tr}_1(\boldsymbol  \sigma), {\rm tr}_2(\boldsymbol  \tau))_{\partial K} - ( {\rm tr}_2(\boldsymbol  \sigma), {\rm tr}_1(\boldsymbol  \tau))_{\partial K} \notag\\
   &\quad + \sum_{F\in \mathcal F(K)}\sum_{e\in \partial F}(\boldsymbol  n \cdot\boldsymbol  \sigma \times \boldsymbol  n, \boldsymbol  t_{F,e}\cdot \boldsymbol  \tau)_{e} \notag\\
   &\quad - \sum_{F\in \mathcal F(K)}\sum_{e\in \partial F}(\boldsymbol  t_{F,e}\cdot \boldsymbol  \sigma, \boldsymbol  n \cdot\boldsymbol  \tau \times \boldsymbol  n)_{e}. \label{eq:incsymGreenidentity}
\end{align}
\end{theorem}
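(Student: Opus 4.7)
All of the required ingredients have been assembled in the paragraphs preceding the statement, and the plan is simply to stitch them together. I would start from the first symmetric Green's identity
\[
(\nabla\times\bs\sigma\times\nabla,\bs\tau)_K-(\bs\sigma,\nabla\times\bs\tau\times\nabla)_K = (\bs\sigma\times\bs n,\nabla\times\bs\tau)_{\partial K}-(\nabla\times\bs\sigma,\bs\tau\times\bs n)_{\partial K},
\]
which was already derived from the symmetry of the bilinear form $(\nabla\times\,\cdot\,,\,\cdot\,\times\nabla)$ on $\bs H^1(K;\mathbb S)$ together with two integrations by parts. The only remaining work is to rewrite the boundary difference on the right in the advertised ${\rm tr}_1$--${\rm tr}_2$ form with the correct edge corrections.

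Next, I would insert the face-wise expansion of $(\bs\sigma\times\bs n,\nabla\times\bs\tau)_{\partial K}$ already produced above (splitting into tangential and normal parts and then using $\bs n\cdot(\nabla\times\bs\tau)=-\nabla_F\cdot(\bs n\times\bs\tau)$ followed by an integration by parts on each face to peel off an edge contribution), together with its mirror obtained by swapping $\bs\sigma$ and $\bs\tau$. Taking the difference and grouping the face terms by which tensor carries the ${\rm tr}_1=\bs n\times(\cdot)\times\bs n$ factor, the four face pieces assemble into
\[
({\rm tr}_1(\bs\sigma),\widetilde{{\rm tr}}_2(\bs\tau))_{\partial K}-(\widetilde{{\rm tr}}_2(\bs\sigma),{\rm tr}_1(\bs\tau))_{\partial K},
\]
where $\widetilde{{\rm tr}}_2(\bs\tau)=\bs n\times(\nabla\times\bs\tau)\Pi_F+\nabla_F(\bs n\cdot\bs\tau\,\Pi_F)$ is precisely the combination produced by the expansion. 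The edge terms, which appear with the signs dictated by each expansion, do not cancel and survive as the advertised sum $\sum_{F}\sum_{e}(\bs n\cdot\bs\sigma\times\bs n,\bs t_{F,e}\cdot\bs\tau)_e-\sum_{F}\sum_{e}(\bs t_{F,e}\cdot\bs\sigma,\bs n\cdot\bs\tau\times\bs n)_e$.

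The last step is to replace $\widetilde{{\rm tr}}_2$ by ${\rm tr}_2=\sym\widetilde{{\rm tr}}_2$ on both sides. Because ${\rm tr}_1(\bs\sigma)$ and ${\rm tr}_1(\bs\tau)$ are symmetric tangential matrices on each face, the Frobenius inner product against them detects only the symmetric part of the second factor, so the substitution is free; this yields exactly \eqref{eq:incsymGreenidentity}.

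I expect the only obstacle to be purely clerical: tracking signs after the subtraction, and noticing that $\Pi_F$ factors may be inserted or absorbed freely whenever a tangential symmetric factor ${\rm tr}_1$ is present. No new integration by parts is required beyond what is already in the preceding discussion, and the symmetrization step is immediate once the trace operators are named, so the proof is essentially an exercise in collecting terms.
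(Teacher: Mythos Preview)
Your proposal is correct and follows exactly the route the paper takes: start from the first symmetric Green's identity obtained by differencing the two integrations by parts, substitute the face-wise expansion of each boundary term (tangential/normal split plus one integration by parts on each face to produce the edge sums), group the face pieces into $({\rm tr}_1(\cdot),\widetilde{\rm tr}_2(\cdot))$ pairs, and finally pass from $\widetilde{\rm tr}_2$ to ${\rm tr}_2=\sym\widetilde{\rm tr}_2$ using the symmetry of ${\rm tr}_1$. The only bookkeeping caveats you flag---signs and free insertion/removal of $\Pi_F$ against a tangential symmetric factor---are precisely the ones the paper handles implicitly.
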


 As both $\boldsymbol  \sigma$ and $\boldsymbol  \tau$ are symmetric, by taking transpose of the boundary terms, we can get another equivalent version of Green's identity. For example, the edge term can be $-(\boldsymbol  n\times \boldsymbol  \sigma\cdot \boldsymbol  n, \boldsymbol  \tau \cdot \boldsymbol  t_{F,e})_e$. 

When the domain $\Omega$ is decomposed into a polyhedral mesh, for piecewise smooth function to be in $\boldsymbol  H(\inc,\Omega;\mathbb S)$, the edge terms across different elements should be canceled. 

\begin{remark}\label{eq:edgecancelation}
We note that $\tr_1(\boldsymbol  \tau)$ is independent of the choice of the direction of normal vectors but $\tr_2(\boldsymbol  \tau)$ is an odd function of $\boldsymbol  n$ in the sense that $\tr_2(\boldsymbol  \tau; - \boldsymbol  n) = - \tr_2(\boldsymbol  \tau; \boldsymbol  n)$. Therefore if $\tr_1(\boldsymbol  \tau)$ and $|\tr_2(\boldsymbol  \tau)|$ are single valued on face $F$, the face terms will be canceled out when integrated over a mesh of the domain $\Omega$.

The edge vector $\boldsymbol  t_{F,e}$ in \eqref{eq:incsymGreenidentity} is the orientation of edge $e$ induced by the outwards normal vector $\boldsymbol  n_{\partial K}$ of the face $F$ with respect to $K$. Therefore, for an interior face $F=K\cap K'$, $\boldsymbol  t_{F(K),e} = - \boldsymbol  t_{F(K'),e}$, where $F(K)$ means $F\in \mathcal F(K)$ with normal vector $\boldsymbol  n_{\partial K}$. 

A sufficient condition for the cancelation of edge terms is to impose the continuity of $\boldsymbol  \tau\mid _e$, which implies $\boldsymbol  \tau(\delta)$ is also continuous at vertices. Those observation will be helpful when designing degree of freedoms for finite elements.  
\end{remark}

Most discussion in Remark \ref{eq:edgecancelation} can be found in~\cite{ArnoldAwanouWinther2008} but the Green's identity \eqref{eq:incsymGreenidentity} and the form of $\tr_2(\boldsymbol  \tau)$ cf. \eqref{eq:tr2prop1} seems new. When the domain is smooth, the edge jump will be replaced by a curvature term, cf.~\cite[Theorem 3.16]{Amstutz;Van-Goethem:2016Analysis}, where a different Green's formula on smooth domains is derived. 

\subsection{Trace complexes}\label{sec:tracecomplex}
For a vector $\boldsymbol  v\in \mathbb R^3$, define the tangential trace and the normal trace as
$$
{\rm tr}_1(\boldsymbol  v) := \boldsymbol  v\times \boldsymbol  n, \quad {\rm tr}_2(\boldsymbol  v) := \boldsymbol  v \cdot \boldsymbol  n. 
$$
For a smooth and symmetric tensor $\boldsymbol  \sigma \in \boldsymbol  H(\div, K; \mathbb S)$ define the normal-normal trace and the normal-tangential trace as
$$
{\rm tr}_1(\boldsymbol  \sigma) := \boldsymbol  n\cdot \boldsymbol  \sigma \cdot \boldsymbol  n, \quad {\rm tr}_2(\boldsymbol  \sigma) := \boldsymbol  n\times \boldsymbol  \sigma\cdot  \boldsymbol  n. 
$$

Then we will have the following trace complexes
\begin{equation}\label{eq:tracecomplex1}
\begin{array}{c}
\xymatrix{
\boldsymbol  a\times \boldsymbol  x + \boldsymbol  b \ar[d]^{\tr_1} \ar[r]^-{\subset} & \boldsymbol  v \ar[d]^{\tr_1} \ar[r]^-{\defm}
                & \boldsymbol  \tau \ar[d]^{\tr_1}   \ar[r]^-{\inc} & \ar[d]^{\tr_1}\boldsymbol  \sigma \ar[r]^{\div} & p \\
a_F \boldsymbol  x_F + \boldsymbol  b_F \ar[r]^{\subset} & \boldsymbol  v\times \boldsymbol  n \ar[r]^{\sym \curl_F}
                & \boldsymbol  n\times \boldsymbol  \tau \times \boldsymbol  n   \ar[r]^{\mathrm{div}_F{\mathrm{div}}_F} &  \boldsymbol  n\cdot \boldsymbol  \sigma \cdot \boldsymbol  n \ar[r]^{}& 0    }
\end{array},
\end{equation}
and
\begin{equation}\label{eq:tracecomplex2}
\begin{array}{c}
\xymatrix{
\boldsymbol  a\times \boldsymbol  x + \boldsymbol  b \ar[d]^{\tr_2} \ar[r]^-{\subset} & \boldsymbol  v \ar[d]^{\tr_2} \ar[r]^-{\defm}
                & \boldsymbol  \tau \ar[d]^{\tr_2}   \ar[r]^-{\inc} & \ar[d]^{\tr_2}\boldsymbol  \sigma \ar[r]^{\div} & p \\
\boldsymbol  a_F \cdot \boldsymbol  x_F + b_F \ar[r]^{\subset} & \boldsymbol  v\cdot \boldsymbol  n \ar[r]^{\nabla_F^2}
                & {\rm tr}_2(\boldsymbol  \tau)   \ar[r]^{\nabla_F^{\bot}\cdot} &  \boldsymbol  n\cdot \boldsymbol  \sigma \times \boldsymbol  n \ar[r]^{}& 0    }
\end{array}.
\end{equation}
In \eqref{eq:tracecomplex1} and \eqref{eq:tracecomplex2}, we present the concrete form instead of Sobolev spaces as we will work mostly on polynomial functions which are smooth enough to define the trace point-wisely.

\begin{lemma}
For any sufficiently smooth vector function $\boldsymbol  v$, we have
\begin{align}
\label{eq:tr2complex1}
\boldsymbol  n\times (\defm\boldsymbol  v )\times\boldsymbol  n &= \sym \curl_F(\boldsymbol  v\times\boldsymbol  n),\\
\label{eq:tr2complex2}
{\rm tr}_2(\defm\boldsymbol  v) &= \nabla_F^2(\boldsymbol  v\cdot\boldsymbol  n).
\end{align}
\end{lemma}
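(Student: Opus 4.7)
\medskip

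\noindent\textbf{Proof plan.} Both identities assert the commutativity of the $\defm$-square in the trace diagrams \eqref{eq:tracecomplex1} and \eqref{eq:tracecomplex2}, so the strategy is purely computational: use that $\boldsymbol{n}$ is constant on $F$, together with the algebraic identities from Section 2 (in particular that $\sym$ commutes with $\Pi_F(\cdot)\Pi_F$ and with $\boldsymbol{n}\times(\cdot)\times\boldsymbol{n}$, because $(\boldsymbol{n}\times A\times\boldsymbol{n})^{\intercal}=\boldsymbol{n}\times A^{\intercal}\times\boldsymbol{n}$ and similarly for the projection).

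For \eqref{eq:tr2complex1}, I would begin with $\defm\boldsymbol{v}=\sym\nabla\boldsymbol{v}$ and commute $\sym$ past $\boldsymbol{n}\times(\cdot)\times\boldsymbol{n}$ to write
$\boldsymbol{n}\times(\defm\boldsymbol{v})\times\boldsymbol{n}=\sym(\boldsymbol{n}\times\nabla\boldsymbol{v}\times\boldsymbol{n})$.
Since $\boldsymbol{n}$ is constant on $F$, the row-wise cross product gives $\nabla\boldsymbol{v}\times\boldsymbol{n}=\nabla(\boldsymbol{v}\times\boldsymbol{n})$ (as a matrix of partials acting on the components of $\boldsymbol{v}\times\boldsymbol{n}$), and then prepending $\boldsymbol{n}\times$ from the left produces $\nabla_F^{\bot}(\boldsymbol{v}\times\boldsymbol{n})$. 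Taking the transpose (invisible to $\sym$), this becomes $\curl_F(\boldsymbol{v}\times\boldsymbol{n})$, by the definition $\curl_F\boldsymbol{w}=(\nabla_F^{\bot}\boldsymbol{w})^{\intercal}$. This yields \eqref{eq:tr2complex1}.

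For \eqref{eq:tr2complex2}, I would use the simpler characterization \eqref{eq:tr2} of ${\rm tr}_2$, namely
$${\rm tr}_2(\boldsymbol{\tau})=2\defm_F(\boldsymbol{n}\cdot\boldsymbol{\tau}\,\Pi_F)-\Pi_F\partial_n\boldsymbol{\tau}\,\Pi_F,$$
and substitute $\boldsymbol{\tau}=\defm\boldsymbol{v}=\tfrac{1}{2}(\nabla\boldsymbol{v}+\grad\boldsymbol{v})$. The key intermediate identity is
$$\boldsymbol{n}\cdot(\defm\boldsymbol{v})\,\Pi_F=\tfrac{1}{2}\bigl(\Pi_F\partial_n\boldsymbol{v}+\nabla_F(\boldsymbol{n}\cdot\boldsymbol{v})\bigr)^{\intercal},$$
obtained by pulling the constant $\boldsymbol{n}$ through the derivative and projecting to the tangent plane. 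Applying $2\defm_F=\sym\nabla_F$ to this expression and using that $\nabla_F^2(\boldsymbol{n}\cdot\boldsymbol{v})$ is already symmetric produces
$$2\defm_F(\boldsymbol{n}\cdot\defm\boldsymbol{v}\,\Pi_F)=\sym\nabla_F(\Pi_F\partial_n\boldsymbol{v})+\nabla_F^2(\boldsymbol{n}\cdot\boldsymbol{v}).$$
On the other hand, since partial derivatives commute and $\boldsymbol{n}$ is constant, $\Pi_F\partial_n(\defm\boldsymbol{v})\Pi_F=\sym\nabla_F(\Pi_F\partial_n\boldsymbol{v})$. Subtracting, the two $\sym\nabla_F(\Pi_F\partial_n\boldsymbol{v})$ terms cancel and we are left with $\nabla_F^2(\boldsymbol{n}\cdot\boldsymbol{v})$.

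\medskip

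\noindent\textbf{Main obstacle.} The only real difficulty is bookkeeping within the paper's row/column convention: distinguishing $\nabla\boldsymbol{v}$ from $\grad\boldsymbol{v}$, $\nabla_F^{\bot}$ from $\curl_F$, and keeping track of when $\sym$, $\Pi_F(\cdot)\Pi_F$, and $\boldsymbol{n}\times(\cdot)\times\boldsymbol{n}$ commute. No deep fact beyond the transpose rule, constancy of $\boldsymbol{n}$, and the definitions in Section 2 is required; once the commuting operators are recognized, both identities reduce to one-line manipulations.
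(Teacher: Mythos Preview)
Your proof of \eqref{eq:tr2complex1} is correct and essentially identical to the paper's: commute $\sym$ with $\boldsymbol n\times(\cdot)\times\boldsymbol n$, pull the constant $\boldsymbol n$ through the derivative, and recognize $\curl_F$ after a transpose.

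For \eqref{eq:tr2complex2} your argument is correct but follows a different route from the paper. You work from the representation \eqref{eq:tr2},
\[
{\rm tr}_2(\boldsymbol\tau)=2\defm_F(\boldsymbol n\cdot\boldsymbol\tau\,\Pi_F)-\Pi_F\partial_n\boldsymbol\tau\,\Pi_F,
\]
compute $\boldsymbol n\cdot(\defm\boldsymbol v)\,\Pi_F=\tfrac12\bigl(\Pi_F\partial_n\boldsymbol v+\nabla_F(\boldsymbol n\cdot\boldsymbol v)\bigr)^{\intercal}$, and observe that the $\sym\nabla_F(\Pi_F\partial_n\boldsymbol v)$ contributions cancel. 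The paper instead starts from the alternative form \eqref{eq:tr2prop2},
\[
{\rm tr}_2(\boldsymbol\tau)=\Pi_F(\boldsymbol\tau\times\nabla)\times\boldsymbol n+\nabla_F(\boldsymbol n\cdot\boldsymbol\tau\,\Pi_F),
\]
uses $\nabla\times\nabla\boldsymbol v=0$ to reduce the curl term, and then applies the auxiliary identity \eqref{eq:columncurl} to rewrite $\Pi_F(\nabla\boldsymbol v\times\nabla)\times\boldsymbol n$ as $\nabla_F(\nabla_F(\boldsymbol v\cdot\boldsymbol n)-\partial_n\Pi_F\boldsymbol v)$; the $\nabla_F\partial_n(\Pi_F\boldsymbol v)$ pieces again cancel. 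Both computations are short and of comparable difficulty; your route has the mild advantage of avoiding any curl manipulation and the call to \eqref{eq:columncurl}, staying entirely within the projection/normal-derivative calculus. One small slip: you wrote ``$2\defm_F=\sym\nabla_F$'' where $2\defm_F=2\sym\nabla_F$ is meant, but your subsequent line uses the correct factor, so this does not affect the argument.
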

\begin{proof}
Using our notation, the first identity \eqref{eq:tr2complex1} is straightforward:
$$
\boldsymbol  n\times (\nabla \boldsymbol  v)\times \boldsymbol  n = \nabla_F^{\bot} \boldsymbol  v \times \boldsymbol  n = (\curl_F \boldsymbol  v\times \boldsymbol  n)^{\intercal}. 
$$
Then apply the $\sym$ operator to get \eqref{eq:tr2complex1}. 

Let $\bs\tau=\defm\boldsymbol  v$. Using \eqref{eq:columncurl} and $\nabla \times \nabla \boldsymbol  v = 0$, it follows that
\begin{align*}
{\rm tr}_2(\boldsymbol  \tau) & = \Pi_F(\boldsymbol  \tau\times\nabla)\times\boldsymbol  n + \nabla_F(\boldsymbol  n\cdot \boldsymbol  \tau \ \Pi_F)\\
& = \frac{1}{2}\Pi_F(\nabla \boldsymbol  v\times\nabla)\times\boldsymbol  n + \frac{1}{2}\nabla_F(\partial_n(\Pi_F\boldsymbol  v)+\nabla_F(\boldsymbol  v\cdot\boldsymbol  n)) \\
& = \frac{1}{2}\nabla_F\big( \nabla_F (\boldsymbol  v\cdot \boldsymbol  n) - \partial_n \Pi_F\boldsymbol  v  \big)+ \frac{1}{2}\nabla_F(\partial_n(\Pi_F\boldsymbol  v)+\nabla_F(\boldsymbol  v\cdot\boldsymbol  n)) \\
&=\nabla_F^2(\boldsymbol  v\cdot\boldsymbol  n),
\end{align*}
as required.
\end{proof}

We then verify the second block.
\begin{lemma}
For any sufficiently smooth and symmetric tensor $\bs\tau$, it holds that
\begin{align}\label{eq:inctr1}
\boldsymbol  n\cdot(\nabla\times\bs\tau\times\nabla) \cdot \boldsymbol  n &= \div_F\div_F(\boldsymbol  n\times\bs\tau\times\boldsymbol  n),\\
\label{eq:inctr2}
\boldsymbol  n \cdot(\nabla\times\bs\tau\times\nabla) \times \boldsymbol  n  & = \nabla_F^{\bot}\cdot  {\rm tr}_2(\boldsymbol  \tau).
\end{align}
\end{lemma}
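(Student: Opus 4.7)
The plan is to prove both identities by direct index calculations, relying on three facts from the paper: the scalar identity $\bs n\cdot(\nabla\times\bs v) = -\nabla_F\cdot(\bs n\times\bs v)$, the tangential curl identity $\bs n\times(\nabla\times\bs v) = \nabla(\bs n\cdot\bs v) - \partial_n\bs v$ from \eqref{eq:tangentialtrace}, and the symmetric form ${\rm tr}_2(\bs\tau) = 2\,\defm_F(\bs n\cdot\bs\tau\,\Pi_F) - \Pi_F\,\partial_n\bs\tau\,\Pi_F$ from \eqref{eq:tr2}. A preliminary step is to unwind $\inc\bs\tau = \nabla\times\bs\tau\times\nabla$ in indices, producing
\[
(\inc\bs\tau)_{ij} = -\epsilon_{iab}\epsilon_{jcd}\,\partial_a\partial_c\tau_{bd},
\]
where the minus sign comes from the convention $\bs A\times\nabla = -(\nabla\times\bs A^\intercal)^\intercal$.

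For \eqref{eq:inctr1}, I would show that both sides equal $-\nabla_F^{\bot}\cdot\bs\tau\cdot\nabla_F^{\bot}$. Contracting the index expression for $(\inc\bs\tau)_{ij}$ with $n_i n_j$ and using $n_i\epsilon_{iab}\partial_a = (\nabla_F^{\bot})_b$ (matching Levi--Civita slot patterns against $(\nabla_F^{\bot})_k = \epsilon_{kpq}n_p\partial_q$) produces $\bs n\cdot\inc\bs\tau\cdot\bs n = -(\nabla_F^{\bot})_b(\nabla_F^{\bot})_d\tau_{bd}$. On the other side, expanding $(\bs n\times\bs\tau\times\bs n)_{ij} = \epsilon_{iab}\epsilon_{jcd}n_a n_d\tau_{bc}$ and applying $\div_F\div_F = (\nabla_F)_i(\nabla_F)_j$, each contraction $\epsilon_{iab}n_a(\nabla_F)_i$ collapses to a surface $\nabla_F^{\bot}$ (the normal piece of $\nabla_F$ drops out by antisymmetry, since $\epsilon_{iab}n_i n_a = 0$); one contraction has $n$ in the ``second'' $\epsilon$-slot and the other in the ``third'', yielding opposite signs. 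The net effect is $\div_F\div_F(\bs n\times\bs\tau\times\bs n) = -(\nabla_F^{\bot})_b(\nabla_F^{\bot})_c\tau_{bc}$, matching the LHS.

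For \eqref{eq:inctr2}, I would set $\bs w := \bs n\cdot(\nabla\times\bs\tau)$, a vector with $w_j = (\nabla_F^{\bot})_b\tau_{bj}$. From $\bs w\times\nabla = -\curl\bs w$ and $-\curl\bs w\times\bs n = \bs n\times\curl\bs w$, the LHS reduces to $\bs n\times\curl\bs w$, and \eqref{eq:tangentialtrace} applied to $\bs w$ gives
\[
\bs n\cdot\inc\bs\tau\times\bs n = \nabla(\bs n\cdot\bs w) - \partial_n\bs w.
\]
A direct index check shows $\bs n\cdot\bs w = \nabla_F^{\bot}\cdot(\bs\tau\bs n) = \rot_F(\bs\tau\bs n)$. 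The LHS is tangential (two $\bs n$'s kill the normal component), so after projecting, using $(\nabla_F^{\bot})_b(\Pi_F)_{bc} = (\nabla_F^{\bot})_c$ and the symmetry of $\bs\tau$ to identify $\Pi_F\,\partial_n\bs w$ with $\rot_F(\Pi_F\,\partial_n\bs\tau\,\Pi_F)$, the LHS becomes $\nabla_F\rot_F(\bs\tau\bs n) - \rot_F(\Pi_F\,\partial_n\bs\tau\,\Pi_F)$. For the RHS, applying $\rot_F$ to the symmetric form of ${\rm tr}_2(\bs\tau)$ and invoking the surface identity $2\,\rot_F\defm_F\bs V = \nabla_F\rot_F\bs V$ (which follows from $\rot_F\grad_F\bs V = 0$ together with $\rot_F(\grad_F\bs V)^\intercal = \nabla_F\rot_F\bs V$) yields the same expression, once I note that $\bs n\cdot\bs\tau\,\Pi_F$ equals the tangential part of $(\bs\tau\bs n)^\intercal$ by symmetry.

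The main obstacle is the careful bookkeeping of Levi--Civita signs: the two near-identical contractions $n_i\epsilon_{iab}\partial_a = (\nabla_F^{\bot})_b$ and $n_b\epsilon_{iab}\partial_a = -(\nabla_F^{\bot})_i$ give opposite signs because of their slot patterns, and this asymmetry is precisely what turns the naive double surface rot on the LHS of \eqref{eq:inctr1} into the $\div_F\div_F$ of the rotated tensor on the RHS; the same care is needed when interchanging column-wise and row-wise $\rot_F$ in the proof of \eqref{eq:inctr2}.
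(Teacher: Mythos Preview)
Your argument is correct but, for \eqref{eq:inctr2}, considerably longer than the paper's. For \eqref{eq:inctr1} you and the paper do essentially the same thing: the paper writes $\boldsymbol n\cdot(\nabla\times\,\cdot\,)=\nabla_F^{\bot}\cdot(\,\cdot\,)$ on both sides in symbolic notation, you unwind the same identity in indices; both land on $\pm\nabla_F^{\bot}\cdot\boldsymbol\tau\cdot\nabla_F^{\bot}$ and match it with $\div_F\div_F(\boldsymbol n\times\boldsymbol\tau\times\boldsymbol n)$.

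The real difference is in \eqref{eq:inctr2}. You start from the form \eqref{eq:tr2}, ${\rm tr}_2(\boldsymbol\tau)=2\defm_F(\boldsymbol n\cdot\boldsymbol\tau\,\Pi_F)-\Pi_F\partial_n\boldsymbol\tau\,\Pi_F$, which forces you to (i) reduce the left side via \eqref{eq:tangentialtrace} to $\nabla(\boldsymbol n\cdot\boldsymbol w)-\partial_n\boldsymbol w$, (ii) project tangentially, and (iii) invoke the auxiliary identity $2\,\rot_F\defm_F\boldsymbol V=\nabla_F\rot_F\boldsymbol V$. The paper instead uses the equivalent representation \eqref{eq:tr2prop2},
\[
{\rm tr}_2(\boldsymbol\tau)=\Pi_F(\boldsymbol\tau\times\nabla)\times\boldsymbol n+\nabla_F(\boldsymbol n\cdot\boldsymbol\tau\,\Pi_F),
\]
for which the proof collapses to one line: applying $\nabla_F^{\bot}\cdot$ kills the $\nabla_F$ term outright (since $\nabla_F^{\bot}\cdot\nabla_F=0$), and $\nabla_F^{\bot}\cdot\Pi_F(\boldsymbol\tau\times\nabla)\times\boldsymbol n=\boldsymbol n\cdot(\nabla\times\boldsymbol\tau\times\nabla)\times\boldsymbol n$ is immediate from $\nabla_F^{\bot}=\boldsymbol n\times\nabla$. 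Your route buys a self-contained index verification that does not rely on \eqref{eq:tr2prop2}; the paper's route shows that \eqref{eq:tr2prop2} is precisely the form of ${\rm tr}_2$ adapted to surface $\rot$, which is conceptually cleaner and avoids all the Levi--Civita bookkeeping you flag as the main obstacle.
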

\begin{proof}
The first identity is from direct computation
$$
\boldsymbol  n\cdot(\nabla\times\bs\tau\times\nabla) \cdot \boldsymbol  n = \nabla_F^{\bot} \cdot \, \Pi_F \,\boldsymbol  \tau \, \Pi_F \cdot \nabla_F^{\bot} = \div_F\div_F(\boldsymbol  n\times\bs\tau\times\boldsymbol  n).
$$
To prove the second identity, we use the trace representation form \eqref{eq:tr2prop1} and the fact $\nabla_F^{\bot}\cdot \nabla_F = 0$ to get 
$$
\boldsymbol  n\cdot (\nabla\times\bs\tau\times\nabla) \times \boldsymbol  n = \nabla_F^{\bot}  \cdot(\bs\tau \times \nabla) \times \boldsymbol  n = \nabla_F^{\bot} \cdot {\rm tr}_2(\boldsymbol  \tau).
$$
\end{proof}

\subsection{Continuity on edges}
In order to construct an $H(\inc)$-conforming finite element, the trace complex inspires us to adopt $H(\div_F\div_F, F;\mathbb S)$ conforming finite element to discretize $\boldsymbol  n\times\bs\tau\times\boldsymbol  n$, and $\boldsymbol  H(\rot_F, F;\mathbb S)$-conforming finite element to discretize ${\rm tr}_2(\boldsymbol  \tau)$. The trace for $\boldsymbol  v_F\in  \boldsymbol  H(\rot_F, F;\mathbb S)$ is $\boldsymbol  v_F\cdot \boldsymbol  t$ on $\partial F$. Two trace operators for $H(\div_F\div_F, F;\mathbb S)$ are identified in~\cite[Lemma 2.1]{ChenHuang2020} and will be recalled below.

\begin{lemma} [Green's identity for the two dimensional $\div\div$ operator]\label{lm:Green}
Let $F$ be a polygon, and let $\boldsymbol  \tau\in \mathcal C^2(F; \mathbb S)$ and $v\in H^2(F)$. Then we have
\begin{align}
(\div_F\div_F\boldsymbol \tau, v)_K&=(\boldsymbol \tau, \nabla^2_F v)_K -\sum_{e\in\mathcal E(F)}\sum_{\delta\in\partial e}\sign_{e,\delta}(\boldsymbol  t_{F,e}\cdot \boldsymbol \tau \cdot \boldsymbol  n_{F,e})(\delta)v(\delta) \notag\\
&\quad - \sum_{e\in\mathcal E(F)}\big[(\boldsymbol  n_{F,e}\cdot \boldsymbol \tau\cdot \boldsymbol  n_{F,e}, \partial_{n_{F,e}} v)_{e} \notag \\
&\qquad \qquad \quad -(\partial_{t}(\boldsymbol  t_{F,e}\cdot \boldsymbol \tau \cdot \boldsymbol  n_{F,e})+\boldsymbol  n_{F,e}\cdot \div_{F}\boldsymbol \tau,  v)_{e}\big], \label{eq:greenidentitydivdiv}
\end{align}
where
\[
\sign_{e,\delta}:=\begin{cases}
1, & \textrm{ if } \delta \textrm{ is the end point of } e, \\
-1, & \textrm{ if } \delta \textrm{ is the start point of } e.
\end{cases}
\]
\end{lemma}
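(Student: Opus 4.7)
The plan is to reduce the claim to two successive applications of the surface Stokes/divergence theorem on $F$, followed by a one-dimensional integration by parts along each edge $e \in \mathcal{E}(F)$. Concretely, writing $\div_F\boldsymbol\tau = \boldsymbol\tau\cdot\nabla_F$, I first pull one $\nabla_F$ off the outer divergence:
\begin{equation*}
(\div_F\div_F\boldsymbol\tau, v)_F = -(\div_F\boldsymbol\tau,\nabla_F v)_F + \sum_{e\in\mathcal E(F)} (\boldsymbol n_{F,e}\cdot\div_F\boldsymbol\tau, v)_e,
\end{equation*}
and then pull a second $\nabla_F$ off (component-wise, using the symmetry of $\boldsymbol\tau$ is not needed at this step):
\begin{equation*}
-(\div_F\boldsymbol\tau,\nabla_F v)_F = (\boldsymbol\tau, \nabla_F^2 v)_F - \sum_{e\in\mathcal E(F)} (\boldsymbol\tau\cdot\boldsymbol n_{F,e}, \nabla_F v)_e.
\end{equation*}

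Next, on each edge $e$ I split the surface gradient into its normal and tangential components along $e$:
\begin{equation*}
\nabla_F v = (\partial_{n_{F,e}} v)\,\boldsymbol n_{F,e} + (\partial_t v)\,\boldsymbol t_{F,e},
\end{equation*}
so that
\begin{equation*}
(\boldsymbol\tau\cdot\boldsymbol n_{F,e}, \nabla_F v)_e = (\boldsymbol n_{F,e}\cdot\boldsymbol\tau\cdot\boldsymbol n_{F,e}, \partial_{n_{F,e}} v)_e + (\boldsymbol t_{F,e}\cdot\boldsymbol\tau\cdot\boldsymbol n_{F,e}, \partial_t v)_e.
\end{equation*}
The normal-normal piece is already one of the boundary terms in the target identity. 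The normal-tangential piece is the one that must be further manipulated: I apply the one-dimensional fundamental theorem of calculus along $e$, which yields
\begin{equation*}
(\boldsymbol t_{F,e}\cdot\boldsymbol\tau\cdot\boldsymbol n_{F,e}, \partial_t v)_e = -\bigl(\partial_t(\boldsymbol t_{F,e}\cdot\boldsymbol\tau\cdot\boldsymbol n_{F,e}), v\bigr)_e + \sum_{\delta\in\partial e}\sign_{e,\delta}\,(\boldsymbol t_{F,e}\cdot\boldsymbol\tau\cdot\boldsymbol n_{F,e})(\delta)\,v(\delta),
\end{equation*}
where the $\sign_{e,\delta}$ is precisely the convention defined in the statement, distinguishing the endpoint from the starting point of $e$ relative to the induced orientation $\boldsymbol t_{F,e}$.

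Finally, I assemble the three contributions: the bulk term $(\boldsymbol\tau, \nabla_F^2 v)_F$, the vertex sum coming from the endpoint boundary values of the edge integration by parts, and the two edge terms $(\boldsymbol n_{F,e}\cdot\boldsymbol\tau\cdot\boldsymbol n_{F,e},\partial_{n_{F,e}}v)_e$ and $(\partial_t(\boldsymbol t_{F,e}\cdot\boldsymbol\tau\cdot\boldsymbol n_{F,e}) + \boldsymbol n_{F,e}\cdot\div_F\boldsymbol\tau, v)_e$. Regrouping with the appropriate signs reproduces \eqref{eq:greenidentitydivdiv} exactly. No step is genuinely hard; the only place to be careful is the bookkeeping of orientations so that the edge-endpoint sum carries the correct $\sign_{e,\delta}$ factor and so that $\boldsymbol n_{F,e}$ is the outward normal to $\partial F$ in $F$ (not the normal to $F$ in $K$). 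Once that convention is fixed at the outset, the derivation is a direct chain of integration by parts.
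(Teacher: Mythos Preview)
Your argument is correct: two applications of the surface divergence theorem, a tangential/normal splitting of $\nabla_F v$ on each edge, and a one-dimensional integration by parts along $e$ assemble exactly into \eqref{eq:greenidentitydivdiv}, with the sign bookkeeping handled by the stated convention for $\sign_{e,\delta}$. Note that the paper does not actually supply its own proof of this lemma; it is quoted from \cite[Lemma~2.1]{ChenHuang2020}, and your derivation is the standard one that reference would contain.
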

Based on Green's identity \eqref{eq:greenidentitydivdiv}, two traces for $H(\div_F\div_F, F;\mathbb S)$ function are
\begin{equation*}
\boldsymbol  n_{F,e}\cdot \boldsymbol \tau\cdot \boldsymbol  n_{F,e}, \quad \partial_{t}(\boldsymbol  t_{F,e}\cdot \boldsymbol \tau \cdot \boldsymbol  n_{F,e})+\boldsymbol  n_{F,e}\cdot \div_{F}\boldsymbol \tau.
\end{equation*}



\begin{lemma}
Let $F\in\mathcal F(K)$, $e\in\mathcal E(F)$, $\boldsymbol  t$ is a direction vector of $e$, and $\boldsymbol  n_{F,e}:=\boldsymbol  t \times\boldsymbol  n$.
For any sufficiently smooth and symmetric tensor $\bs\tau$, we have on edge $e$ that
\begin{align}
\label{eq:edgetrtt}
\boldsymbol  n_{F,e}\cdot {\rm tr}_1(\boldsymbol  \tau)\cdot \boldsymbol  n_{F,e}& = -\boldsymbol  t\cdot \boldsymbol  \tau \cdot \boldsymbol  t,\\
\label{eq:edgetrprop1}
\partial_t(\boldsymbol  t\cdot{\rm tr}_1(\boldsymbol  \tau)\cdot\boldsymbol  n_{F,e}) + \boldsymbol  n_{F,e}\cdot\div_F({\rm tr}_1(\boldsymbol  \tau)) &= \partial_t(\boldsymbol  n_{F,e}\cdot\bs\tau\cdot\boldsymbol  t) - \boldsymbol  n\cdot(\nabla\times\bs\tau)\cdot\boldsymbol  t,\\
\label{eq:edgetrprop2}
 {\rm tr}_2(\boldsymbol  \tau)\cdot \boldsymbol  t &= \boldsymbol  n \times (\nabla \times \boldsymbol  \tau)\cdot \boldsymbol  t + \partial_t(\Pi_F \boldsymbol  \tau \cdot \boldsymbol  n).
\end{align}
\end{lemma}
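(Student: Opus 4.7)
My plan is to establish all three identities by working in the right-handed orthonormal frame $(\boldsymbol t,\boldsymbol n,\boldsymbol n_{F,e})$ along $e$, where $\boldsymbol n_{F,e}=\boldsymbol t\times\boldsymbol n$, so that $\boldsymbol n\times\boldsymbol n_{F,e}=\boldsymbol t$, $\boldsymbol n_{F,e}\times\boldsymbol n=-\boldsymbol t$, and $\boldsymbol n\times\boldsymbol t=-\boldsymbol n_{F,e}$. Since $F$ is planar and $e$ is straight, these frame vectors are constant and commute with all of $\partial_t$, $\partial_{n_{F,e}}$, $\nabla_F$, and $\nabla\times$. Identities \eqref{eq:edgetrtt} and \eqref{eq:edgetrprop2} will reduce to direct manipulation of the triple product and of formula \eqref{eq:tr2prop1}; identity \eqref{eq:edgetrprop1} will require expanding both sides in this local frame and matching them.

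For \eqref{eq:edgetrtt}, the scalar triple product gives $\boldsymbol n_{F,e}\cdot(\boldsymbol n\times\boldsymbol v)=\boldsymbol v\cdot(\boldsymbol n_{F,e}\times\boldsymbol n)=-\boldsymbol v\cdot\boldsymbol t$, which applied column-wise yields $\boldsymbol n_{F,e}\cdot(\boldsymbol n\times\boldsymbol\tau)=-\boldsymbol t\cdot\boldsymbol\tau$ as a row vector; analogously, for any row vector $\boldsymbol r$, $(\boldsymbol r\times\boldsymbol n)\cdot\boldsymbol n_{F,e}=\boldsymbol r\cdot(\boldsymbol n\times\boldsymbol n_{F,e})=\boldsymbol r\cdot\boldsymbol t$. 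Combining gives $\boldsymbol n_{F,e}\cdot {\rm tr}_1(\boldsymbol\tau)\cdot\boldsymbol n_{F,e}=(-\boldsymbol t\cdot\boldsymbol\tau)\cdot\boldsymbol t=-\boldsymbol t\cdot\boldsymbol\tau\cdot\boldsymbol t$.

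For \eqref{eq:edgetrprop2}, start from \eqref{eq:tr2prop1} and contract with $\boldsymbol t$ on the right. Since $\boldsymbol t$ is tangent to $F$ we have $\Pi_F\cdot\boldsymbol t=\boldsymbol t$, so the first summand contributes $\boldsymbol n\times(\nabla\times\boldsymbol\tau)\cdot\boldsymbol t$. In the second summand, $(\boldsymbol w\,\nabla_F)\cdot\boldsymbol t=\grad_F\boldsymbol w\cdot\boldsymbol t$ is just the directional derivative of $\boldsymbol w:=\Pi_F\boldsymbol\tau\cdot\boldsymbol n$ along $\boldsymbol t$, namely $\partial_t\boldsymbol w$, yielding the claim.

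For \eqref{eq:edgetrprop1}, repeating the computation of \eqref{eq:edgetrtt} with $\boldsymbol t$ in place of one copy of $\boldsymbol n_{F,e}$ gives $\boldsymbol t\cdot {\rm tr}_1(\boldsymbol\tau)\cdot\boldsymbol n_{F,e}=\boldsymbol n_{F,e}\cdot\boldsymbol\tau\cdot\boldsymbol t$, so the $\partial_t$ terms on the two sides of \eqref{eq:edgetrprop1} coincide and it remains to prove $\boldsymbol n_{F,e}\cdot\div_F{\rm tr}_1(\boldsymbol\tau)=-\boldsymbol n\cdot(\nabla\times\boldsymbol\tau)\cdot\boldsymbol t$. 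The tangential row vector $\boldsymbol n_{F,e}\cdot{\rm tr}_1(\boldsymbol\tau)$ has, by the computation behind \eqref{eq:edgetrtt}, $\boldsymbol t$-component $\boldsymbol n_{F,e}\cdot\boldsymbol\tau\cdot\boldsymbol t$ and $\boldsymbol n_{F,e}$-component $-\boldsymbol t\cdot\boldsymbol\tau\cdot\boldsymbol t$, so applying $\nabla_F=\boldsymbol t\,\partial_t+\boldsymbol n_{F,e}\,\partial_{n_{F,e}}$ gives
\[
\boldsymbol n_{F,e}\cdot\div_F{\rm tr}_1(\boldsymbol\tau)=\partial_t(\boldsymbol n_{F,e}\cdot\boldsymbol\tau\cdot\boldsymbol t)-\partial_{n_{F,e}}(\boldsymbol t\cdot\boldsymbol\tau\cdot\boldsymbol t).
\]
On the other hand, \eqref{eq:rotF} together with the constancy of $\boldsymbol t$ produces
\[
\boldsymbol n\cdot(\nabla\times\boldsymbol\tau)\cdot\boldsymbol t=\rot_F(\boldsymbol\tau\cdot\boldsymbol t)=\partial_{n_{F,e}}(\boldsymbol t\cdot\boldsymbol\tau\cdot\boldsymbol t)-\partial_t(\boldsymbol n_{F,e}\cdot\boldsymbol\tau\cdot\boldsymbol t),
\]
exactly the negative of the previous display. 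The main obstacle is \eqref{eq:edgetrprop1}: the row/column conventions of the notation system make it easy to drop a transpose, and the sign of $\rot_F$ in the local frame hinges on the right-handedness induced by $\boldsymbol n_{F,e}=\boldsymbol t\times\boldsymbol n$; once the basis expansion of $\boldsymbol n_{F,e}\cdot{\rm tr}_1(\boldsymbol\tau)$ is written out correctly, the two sides match by a single sign flip.
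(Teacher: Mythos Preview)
Your proof is correct and follows essentially the same route as the paper's: for \eqref{eq:edgetrtt} and \eqref{eq:edgetrprop2} you use the triple product and \eqref{eq:tr2prop1} exactly as the paper does, and for \eqref{eq:edgetrprop1} you match the $\partial_t$ terms via $\boldsymbol t\cdot{\rm tr}_1(\boldsymbol\tau)\cdot\boldsymbol n_{F,e}=\boldsymbol n_{F,e}\cdot\boldsymbol\tau\cdot\boldsymbol t$ and then establish $\boldsymbol n_{F,e}\cdot\div_F{\rm tr}_1(\boldsymbol\tau)=-\boldsymbol n\cdot(\nabla\times\boldsymbol\tau)\cdot\boldsymbol t$. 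The only difference is cosmetic: you verify the last identity by expanding both sides in the local frame $(\boldsymbol t,\boldsymbol n_{F,e})$, whereas the paper obtains it more compactly from the operator identity $\nabla_F\cdot(\boldsymbol n\times\,\cdot\,)=-\boldsymbol n\cdot(\nabla\times\,\cdot\,)$ together with $(\boldsymbol\tau\times\boldsymbol n)\cdot\boldsymbol n_{F,e}=\boldsymbol\tau\cdot\boldsymbol t$.
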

\begin{proof}
Let us compute 
\begin{equation}\label{eq:tn}
(\boldsymbol  \tau \times \boldsymbol  n)\cdot \boldsymbol  n_{F,e} = ((\boldsymbol  \tau \times \boldsymbol  n) \times \boldsymbol  n)\cdot (\boldsymbol  n_{F,e}\times \boldsymbol  n) = \boldsymbol  \tau\cdot \boldsymbol  t. 
\end{equation}
Then identity \eqref{eq:edgetrtt} follows. The identity \eqref{eq:edgetrprop1} follows from
$$
\partial_t(\boldsymbol  t\cdot(\boldsymbol  n\times\bs\tau\times\boldsymbol  n)\cdot\boldsymbol  n_{F,e}) = \partial_t(\boldsymbol  n_{F,e}\cdot\bs\tau\cdot\boldsymbol  t),
$$
and using \eqref{eq:rotF} and \eqref{eq:tn}
\begin{align*}
&\boldsymbol  n_{F,e}\cdot\div_F(\boldsymbol  n\times\bs\tau\times\boldsymbol  n)  =  \nabla_F\cdot (\boldsymbol  n\times\bs\tau\times\boldsymbol  n)\cdot \boldsymbol  n_{F,e}\\
 = &- \boldsymbol  n \cdot \nabla \times ((\boldsymbol  \tau \times\boldsymbol  n)\cdot \boldsymbol  n_{F,e}) = - \boldsymbol  n \cdot (\nabla \times \boldsymbol  \tau)\cdot \boldsymbol  t. 
\end{align*}
The identity \eqref{eq:edgetrprop2} is a direct consequence of \eqref{eq:tr2prop1}.
\end{proof}

Those formulae on the edge trace suggests the continuity of $\boldsymbol  \tau \cdot \boldsymbol  t$ and $(\nabla \times \boldsymbol  \tau)\cdot \boldsymbol  t$ on edges. As we mentioned before, in view of Green's identity \eqref{eq:incsymGreenidentity}, it is sufficient to impose the whole tensor $\boldsymbol  \tau$ is continuous on edges. The continuity of $(\nabla \times \boldsymbol  \tau)\cdot \boldsymbol  t$ is not surprising as $\nabla \times \boldsymbol  \tau \times \nabla \in \boldsymbol  H(\div;\mathbb S)$ and thus the normal trace $((\nabla \times \boldsymbol  \tau) \times \nabla) \cdot \boldsymbol  n = (\nabla \times \boldsymbol  \tau)\cdot \nabla_F^{\bot}\in \bs L^2(F;\mathbb R^2)$. Namely $\nabla \times \boldsymbol  \tau\in \boldsymbol  H(\rot_F,F;\mathbb R^2)$ and the edge trace of $\boldsymbol  H(\rot_F,F;\mathbb R^2)$ implies the continuity of $(\nabla \times \boldsymbol  \tau)\cdot \boldsymbol  t$. 

\subsection{Bubble complexes}
We give characterization of bubble functions following~\cite{ArnoldAwanouWinther2008}. Let $K$ be a tetrahedron with vertices $\boldsymbol  x_1$, $\boldsymbol  x_2$, $\boldsymbol  x_3$ and $\boldsymbol  x_4$.
We label the face opposite to $\boldsymbol  x_i$ as the $i$-th face $F_i$,
and denote by $\boldsymbol  n_i$ the unit outwards normal vector of face $F_i$.
Set
$\boldsymbol{N}_{i,j}:=\sym(\boldsymbol  n_k \boldsymbol  n_l^{\intercal}) = \frac{1}{2}(\boldsymbol{n}_k\boldsymbol{n}_l^{\intercal}+\boldsymbol{n}_l\boldsymbol{n}_k^{\intercal})$, where $(ijkl)$ is a permutation cycle of $(1234)$.
Then it is shown in  
\cite{ChenHuHuang2018} that the $6$ symmetric tensors $\{\boldsymbol{N}_{i,j}, i,j=1,2,3,4, i<j\}$ form a basis of $\mathbb{S}$.

Define a tangential-tangential bubble function space of tensorial polynomials of degree $k$ as
\[
\boldsymbol{B}_{K,k}^t:= \mathbb{P}_{k}(K; \mathbb{S})\cap \ker({\rm tr}_1).
\]
It is easy to verify $\tr_1(\lambda_i\lambda_j \boldsymbol{N}_{i,j}) = 0$. 
Since the dimension of $\boldsymbol{B}_{K,k}^t$ is $k(k^2-1)$ (cf.~\cite[Lemma~6.1]{ArnoldAwanouWinther2008}), we have
$$
\boldsymbol{B}_{K,k}^t=\mathbb P_{k-2}(K) \otimes \{ \lambda_i\lambda_j \boldsymbol{N}_{i,j} \} = \sum_{1\leq i<j\leq 4}\mathbb P_{k-2}(K)\lambda_i\lambda_j \boldsymbol{N}_{i,j}.
$$
Define an $\boldsymbol{H}(\inc, K; \mathbb{S})$ bubble function space of polynomials of degree $k$ as
\[
\boldsymbol{B}_{K,k}:= \mathbb{P}_{k}(K; \mathbb{S})\cap \ker({\rm tr}_1) \cap \ker({\rm tr}_2) = \boldsymbol{B}_{K,k}^t \cap \ker({\rm tr}_2).
\]
According to Lemma~6.2 in~\cite{ArnoldAwanouWinther2008}, for any $\bs\tau\in \boldsymbol{B}_{K,k}$, it holds $\bs\tau\mid _e=\bs0\quad\forall~e\in\mathcal E(K).$
Thus
$$
\bs\tau\in \sum_{1\leq i<j\leq 4}\lambda_i\lambda_j\big(\lambda_k \mathbb P_{k-3}(K)+\lambda_l \mathbb P_{k-3}(K)\big)\boldsymbol{N}_{i,j}.
$$
Although there is no precise characterization of $\boldsymbol{B}_{K,k}$, it is shown in~\cite{ArnoldAwanouWinther2008} that the dimension of $\boldsymbol{B}_{K,k}$ is $k^3 - 6k^2 + 11k$.

Furthermore the bubble polynomials for the elasticity complex with $k\geq4$  is established in~\cite[Lemma~7.1]{ArnoldAwanouWinther2008} and~\cite[Lemma~3.2]{HuZhang2015}
\begin{equation}\label{eq:elascomplexbubble}
\resizebox{.92\hsize}{!}{$
\bs0\autorightarrow{$\subset$}{} b_K\mathbb P_{k-3}(K;\mathbb R^3)\autorightarrow{$\defm$}{}\boldsymbol{B}_{K,k}\autorightarrow{$\inc$}{} \boldsymbol{B}_{K,k-2}^n \autorightarrow{$\boldsymbol{\div}$}{} \mathbb P_{k-3}(K;\mathbb R^3)/\bs{RM}\autorightarrow{}{}\bs0,
$}
\end{equation}
where $b_K = \lambda_1\lambda_2\lambda_3\lambda_4$ is the volume bubble polynomial and $\boldsymbol  B_{K,k}^n = \mathbb P_{k}(K;\mathbb S)\cap \boldsymbol  H_0(\div, K;\mathbb S)$ is the $\boldsymbol  H(\div; \mathbb S)$ bubble function space and is characterized in~\cite{HuZhang2015}
\begin{equation}\label{eq:divbubble}
\boldsymbol  B_{K,k}^n=\sum_{0\leq i<j\leq 3}\lambda_i\lambda_j \mathbb P_{k-2}(K)\boldsymbol{T}_{i,j}, \quad k\geq 2
\end{equation}
with $\boldsymbol  T_{i,j} := \boldsymbol{t}_{i,j}\boldsymbol{t}_{i,j}^{\intercal}$ and $\boldsymbol{t}_{i,j}:=\boldsymbol x_j-\boldsymbol x_i$. 

Similarly we also have two dimensional bubble complexes on face $F$. The bubble function space $\mathbb P_{k}(F; \mathbb S)\cap \boldsymbol  H_0(\div_F\div_F, F;\mathbb S)$ is 
\begin{align*}
\{ \boldsymbol  \tau \in \mathbb P_{k}(F; \mathbb S):  \,&\partial_{t}(\boldsymbol  t_{F,e}\cdot \boldsymbol \tau \cdot \boldsymbol  n_{F,e})+\boldsymbol  n_{F,e}\cdot \div_{F}\boldsymbol \tau = 0, \\
&\boldsymbol  n_{F,e}\cdot \boldsymbol \tau\cdot \boldsymbol  n_{F,e} = 0\;\; \forall~e\in \mathcal E(F),\;\; \boldsymbol \tau(\delta)=\boldsymbol 0\;\;\forall~\delta\in \mathcal V(F)\}.
\end{align*}
 We present the results below and a proof of \eqref{eq:divdivbubble} can be found in \cite{ChenHuang2020}. 
\begin{equation}\label{eq:divdivbubble}
\resizebox{.92\hsize}{!}{$
\bs0\autorightarrow{$\subset$}{} b_F\mathbb P_{k-2}(F;\mathbb R^2)\autorightarrow{$\sym\curl_F$}{} \mathbb P_{k}(F; \mathbb S)\cap \boldsymbol  H_0(\div_F\div_F, F;\mathbb S)\autorightarrow{$\div_F\div_F$}{} \mathbb P_{k-2}(F)/\mathbb P_1(F)\autorightarrow{}{}\bs0,
$}
\end{equation}
For the two dimensional Hessian polynomial complex, we have
\begin{equation}\label{eq:hessianbubble}
\resizebox{.92\hsize}{!}{$
\bs0\autorightarrow{$\subset$}{} b_F^2\mathbb P_{k-5}(F)\autorightarrow{$\nabla_F^2$}{} \mathbb P_{k-1}(F; \mathbb S)\cap \boldsymbol  H_0(\rot_F, F;\mathbb S)\autorightarrow{$\rot_F$}{} \mathbb P_{k-2}(F;\mathbb R^2)/\bs{RM}^{\bot}\autorightarrow{}{}\bs0,
$}
\end{equation}
which is a rotation of the 2D elasticity bubble complex established in~\cite{Arnold;Winther:2002finite}.

\section{Finite Element Elasticity Complex}
In this section we present a finite element elasticity complex. In the complex, the $H^1$ conforming finite element is the smooth finite element developed by Neilan for the Stokes complex~\cite{Neilan2015}. The $\boldsymbol  H(\div;\mathbb S)$-conforming finite element is the Hu-Zhang element~\cite{Hu2015,HuZhang2015}. The missing component is $\boldsymbol  H(\inc; \mathbb S)$-conforming finite element which is the focus of this section. 
 
\subsection{$H^1$ conforming finite element for vectors}
Recall the $H^1$ conforming finite element for vectors by Neilan in~\cite{Neilan2015}.
The space of shape functions is chosen as $\mathbb P_{k+1}(K;\mathbb R^3)$ for $k+1\geq 7$.
The degrees of freedom are
\begin{align}
\boldsymbol v (\delta), \nabla \boldsymbol v (\delta), \nabla^2\boldsymbol v (\delta) & \quad\forall~\delta\in \mathcal V(K), \label{H1femdof1}\\
(\boldsymbol v, \boldsymbol  q)_e & \quad\forall~\boldsymbol  q\in\mathbb P_{k-5}(e;\mathbb R^3),  e\in\mathcal E(K),\label{H1femdof2}\\
(\partial_{n_i}\boldsymbol v, \boldsymbol  q)_e & \quad\forall~\boldsymbol  q\in\mathbb P_{k-4}(e;\mathbb R^3),  e\in\mathcal E(K), i=1,2,\label{H1femdof3}\\
(\boldsymbol v, \boldsymbol  q)_F & \quad\forall~\boldsymbol  q\in\mathbb P_{k-5}(F;\mathbb R^3),  F\in\mathcal F(K),\label{H1femdof4}\\
(\boldsymbol v, \boldsymbol q)_K & \quad\forall~\boldsymbol q\in \mathbb P_{k-3}(K;\mathbb R^3). \label{H1femdof5} 
\end{align}
The Neilan element has extra smoothness at vertices and edges. 
Note that the normal derivative is only continuous on edges not on faces and thus this element is only in $H^1$ not $H^2$. To construct an $H^2$-conforming element on tetrahedron, the degree of polynomial will be higher, i.e. $k+1\geq 9$; see Zhang~\cite{Zhang:2009family}. 

\subsection{$H(\div)$ conforming finite element for symmetric tensors}
Recall the $\boldsymbol  H(\div)$ conforming finite element for symmetric tensors in~\cite{HuZhang2015}.
The space of shape functions is chosen as $\mathbb P_{k-2}(K;\mathbb S)$ for $k-2\geq 4$.
The degrees of freedom are
\begin{align}
\boldsymbol\tau(\delta) & \quad\forall~\delta\in \mathcal V(K), \label{HdivSfemdof1}\\
(\boldsymbol  n_i^{\intercal}\boldsymbol\tau\boldsymbol  n_j, q)_e & \quad\forall~\boldsymbol  q\in\mathbb P_{k-4}(e),  e\in\mathcal E(K), i,j=1,2,\label{HdivSfemdof2}\\
(\boldsymbol  n_i^{\intercal}\boldsymbol\tau\boldsymbol  t, q)_e & \quad\forall~\boldsymbol  q\in\mathbb P_{k-4}(e),  e\in\mathcal E(K), i=1,2,\label{HdivSfemdof3}\\
(\boldsymbol\tau\boldsymbol n, \boldsymbol  q)_F & \quad\forall~\boldsymbol  q\in\mathbb P_{k-5}(F;\mathbb R^3),  F\in\mathcal F(K),\label{HdivSfemdof4}\\
(\boldsymbol\tau, \boldsymbol q)_K & \quad\forall~\boldsymbol q\in \mathbb P_{k-4}(K;\mathbb S). \label{HdivSfemdof5} 
\end{align}
The unisovlence can be proved as follows. The boundary degree of freedom \eqref{HdivSfemdof1}-\eqref{HdivSfemdof4} will determine the trace $\boldsymbol  \tau \boldsymbol  n$ uniquely by the unisolvence of the Lagrange elements. The interior part will be determined by \eqref{HdivSfemdof5} due to the characterization of $\boldsymbol  H(\div, K; \mathbb S)$ bubble function, cf. \eqref{eq:divbubble}.

\subsection{$H(\inc)$ conforming finite element for symmetric tensors}
With previous preparations, we can construct an $H(\inc)$ conforming finite element now.
Take $\mathbb P_k(K;\mathbb S), k \geq 6,$ as the space of shape functions. The degrees of freedom are
\begin{align}
\boldsymbol\tau (\delta), \nabla\boldsymbol\tau (\delta) & \quad\forall~\delta\in \mathcal V(K), \label{Hincfem3ddof1}\\
(\nabla\times\boldsymbol\tau\times\nabla) (\delta) & \quad\forall~\delta\in \mathcal V(K), \label{Hincfem3ddof1-1}\\
(\boldsymbol\tau, \boldsymbol  q)_e & \quad\forall~\boldsymbol  q\in\mathbb P_{k-4}(e;\mathbb S),  e\in\mathcal E(K),\label{Hincfem3ddof2}\\
(\nabla\times\boldsymbol\tau\cdot\boldsymbol  t, \boldsymbol  q)_e & \quad\forall~\boldsymbol  q\in\mathbb P_{k-3}(e;\mathbb R^3),  e\in\mathcal E(K),\label{Hincfem3ddof3}\\
(\boldsymbol  n_i^{\intercal}(\nabla\times\boldsymbol\tau\times\nabla)\boldsymbol  n_j, q)_e & \quad\forall~q\in\mathbb P_{k-4}(e),  e\in\mathcal E(K), i,j=1,2,\label{Hincfem3ddof4}\\
(\boldsymbol  n_i^{\intercal}(\nabla\times\boldsymbol\tau\times\nabla)\boldsymbol  t, q)_e & \quad\forall~q\in\mathbb P_{k-4}(e),  e\in\mathcal E(K), i=1,2,\label{Hincfem3ddof5}\\
(\boldsymbol  n\times\boldsymbol\tau\times\boldsymbol  n, \boldsymbol  q)_F & \quad\forall~\boldsymbol  q\in\nabla_F^2\mathbb P_{k-5}(F),  F\in\mathcal F(K),\label{Hincfem3ddof6}\\
(\boldsymbol  n\times\boldsymbol\tau\times\boldsymbol  n, \boldsymbol  q)_F & \quad\forall~\boldsymbol  q\in\sym(\boldsymbol  x^{\perp}\mathbb P_{k-5}(F;\mathbb R^2)),  F\in\mathcal F(K),\label{Hincfem3ddof7}\\
({\rm tr}_2(\boldsymbol  \tau), \boldsymbol  q)_F & \quad\forall~\boldsymbol  q\in\sym \curl_F\mathbb P_{k-5}(F;\mathbb R^2),  F\in\mathcal F(K),\label{Hincfem3ddof8}\\
({\rm tr}_2(\boldsymbol  \tau), \boldsymbol  q)_F & \quad\forall~\boldsymbol  q\in\boldsymbol  x\boldsymbol  x^{\intercal}\mathbb P_{k-5}(F),  F\in\mathcal F(K),\label{Hincfem3ddof9}\\
(\boldsymbol\tau, \boldsymbol q)_K & \quad\forall~\boldsymbol q\in \inc\mathbb P_{k-4}(K;\mathbb S)\oplus\sym(\mathbb P_{k-3}(K;\mathbb R^3)\boldsymbol  x^{\intercal}). \label{Hincfem3ddof10}
\end{align}

We first show the trace is uniquely determined by the degree of freedom \eqref{Hincfem3ddof1}-\eqref{Hincfem3ddof9} on the boundary.  
\begin{lemma}\label{lem:unisovlenHincfempre0}
Let $F\in\mathcal F(K)$ and $\boldsymbol  \tau \in \mathbb P_k(K;\mathbb S)$.
If all the degrees of freedom \eqref{Hincfem3ddof1}-\eqref{Hincfem3ddof9} on face $F$ vanish, then $\tr_1(\boldsymbol  \tau) =\boldsymbol 0$ and ${\rm tr}_2(\boldsymbol  \tau)=\boldsymbol  0$ on face $F$.
\end{lemma}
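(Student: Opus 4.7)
My plan is to combine the edge-trace identities \eqref{eq:edgetrtt}--\eqref{eq:edgetrprop2}, \eqref{eq:inctr1}--\eqref{eq:inctr2} with the two-dimensional bubble complexes \eqref{eq:divdivbubble} and \eqref{eq:hessianbubble}, so that the problem reduces to two parallel moment arguments. First I verify the bubble-space reduction. Lagrange-type arguments applied to \eqref{Hincfem3ddof1}--\eqref{Hincfem3ddof2} force $\boldsymbol \tau|_e = \boldsymbol 0$ on every edge $e \subset \partial F$. Likewise, $(\nabla \times \boldsymbol \tau)\cdot \boldsymbol t \in \mathbb P_{k-1}(e;\mathbb R^3)$ has vanishing endpoint values (because $\nabla \boldsymbol \tau(\delta) = \boldsymbol 0$ by \eqref{Hincfem3ddof1}) and vanishing interior moments against $\mathbb P_{k-3}(e;\mathbb R^3)$ by \eqref{Hincfem3ddof3}, so $(\nabla \times \boldsymbol \tau)\cdot \boldsymbol t = \boldsymbol 0$ on $e$. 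Feeding these two facts into \eqref{eq:edgetrtt}--\eqref{eq:edgetrprop2} makes both $\boldsymbol H(\div_F\div_F)$ edge traces of ${\rm tr}_1(\boldsymbol \tau)$ and the edge trace ${\rm tr}_2(\boldsymbol \tau)\cdot \boldsymbol t$ vanish, placing ${\rm tr}_1(\boldsymbol \tau) \in \mathbb P_k(F; \mathbb S) \cap \boldsymbol H_0(\div_F\div_F, F; \mathbb S)$ and ${\rm tr}_2(\boldsymbol \tau) \in \mathbb P_{k-1}(F; \mathbb S) \cap \boldsymbol H_0(\rot_F, F; \mathbb S)$.

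For ${\rm tr}_1(\boldsymbol \tau)$, by \eqref{eq:inctr1} the scalar $\div_F\div_F{\rm tr}_1(\boldsymbol \tau) = \boldsymbol n \cdot (\nabla \times \boldsymbol \tau \times \nabla) \cdot \boldsymbol n \in \mathbb P_{k-2}(F)$ vanishes on $\partial F$, using \eqref{Hincfem3ddof1-1} at vertices and \eqref{Hincfem3ddof4} on edges (expressing the face normal $\boldsymbol n$ in the globally fixed basis $\{\boldsymbol n_{e,1}, \boldsymbol n_{e,2}\}$). Hence $\div_F\div_F{\rm tr}_1(\boldsymbol \tau) = b_F \tilde r$ for some $\tilde r \in \mathbb P_{k-5}(F)$, and applying the Green's identity \eqref{eq:greenidentitydivdiv} to \eqref{Hincfem3ddof6} (boundary terms vanishing by the $H_0$-bubble property) gives $(b_F \tilde r, p)_F = 0$ for all $p \in \mathbb P_{k-5}(F)$, so $\tilde r = 0$. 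Exactness of \eqref{eq:divdivbubble} yields $\boldsymbol \phi = b_F \boldsymbol \psi$ with $\boldsymbol \psi \in \mathbb P_{k-5}(F; \mathbb R^2)$ such that ${\rm tr}_1(\boldsymbol \tau) = \sym\curl_F \boldsymbol \phi$. Integration by parts (again the boundary terms vanish since $\boldsymbol \phi$ is a $2$D bubble) converts \eqref{Hincfem3ddof7} into $(\boldsymbol \phi, \rot_F \boldsymbol q)_F = 0$ for $\boldsymbol q \in \sym(\boldsymbol x^{\perp} \mathbb P_{k-5}(F; \mathbb R^2))$; the bijectivity of $\rot_F : \sym(\boldsymbol x^{\perp} \mathbb P_{k-5}(F; \mathbb R^2)) \to \mathbb P_{k-5}(F; \mathbb R^2)$ supplied by \eqref{eq:hessiancomplexPolydouble} then forces $(b_F \boldsymbol \psi, \boldsymbol w)_F = 0$ for all $\boldsymbol w \in \mathbb P_{k-5}(F;\mathbb R^2)$, whence $\boldsymbol \psi = \boldsymbol 0$ and ${\rm tr}_1(\boldsymbol \tau) = \boldsymbol 0$.

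The analysis of ${\rm tr}_2(\boldsymbol \tau)$ parallels the above with \eqref{eq:hessianbubble} in place of \eqref{eq:divdivbubble}. By \eqref{eq:inctr2}, the tangential components of $\rot_F {\rm tr}_2(\boldsymbol \tau) = \boldsymbol n \cdot (\nabla \times \boldsymbol \tau \times \nabla) \times \boldsymbol n$ on each edge reduce (up to sign) to $\boldsymbol n \cdot (\nabla \times \boldsymbol \tau \times \nabla) \cdot \boldsymbol n_{F,e}$ and $\boldsymbol n \cdot (\nabla \times \boldsymbol \tau \times \nabla) \cdot \boldsymbol t$, which are killed by \eqref{Hincfem3ddof1-1}, \eqref{Hincfem3ddof4}, \eqref{Hincfem3ddof5}; so $\rot_F {\rm tr}_2(\boldsymbol \tau) \in b_F \mathbb P_{k-5}(F; \mathbb R^2)$. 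Integrating \eqref{Hincfem3ddof8} by parts (permitted since ${\rm tr}_2(\boldsymbol \tau)\cdot \boldsymbol t = \boldsymbol 0$) together with the same positivity-of-$b_F$ argument forces $\rot_F {\rm tr}_2(\boldsymbol \tau) = \boldsymbol 0$, so \eqref{eq:hessianbubble} provides $p = b_F^2 s$ with $s \in \mathbb P_{k-5}(F)$ such that ${\rm tr}_2(\boldsymbol \tau) = \nabla_F^2 p$. Integrating \eqref{Hincfem3ddof9} by parts twice (valid because $p$ and $\nabla_F p$ vanish on $\partial F$) and using the bijectivity $\div_F\div_F : \boldsymbol x \boldsymbol x^{\intercal} \mathbb P_{k-5}(F) \to \mathbb P_{k-5}(F)$ from \eqref{eq:divdivcomplexPolydouble} gives $(b_F^2 s, q)_F = 0$ for all $q \in \mathbb P_{k-5}(F)$, so $s = 0$ and ${\rm tr}_2(\boldsymbol \tau) = \boldsymbol 0$.

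The main obstacle I expect is the edge bookkeeping: one must carefully correlate the globally fixed frame $\{\boldsymbol n_{e,1}, \boldsymbol n_{e,2}, \boldsymbol t_e\}$ with the face-adapted frame $\{\boldsymbol n, \boldsymbol n_{F,e}, \boldsymbol t\}$ so that the prescribed DOFs genuinely control the scalars $\boldsymbol n \cdot \inc(\boldsymbol \tau) \cdot \boldsymbol n$, $\boldsymbol n \cdot \inc(\boldsymbol \tau) \cdot \boldsymbol n_{F,e}$, $\boldsymbol n \cdot \inc(\boldsymbol \tau) \cdot \boldsymbol t$ and the vector $(\nabla \times \boldsymbol \tau)\cdot \boldsymbol t$ on each edge of $F$; once this is in place, the two bubble-complex arguments run cleanly in parallel.
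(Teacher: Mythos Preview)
Your overall strategy matches the paper's and the ${\rm tr}_2$ half is correct, but there is a real gap in the ${\rm tr}_1$ half. Exactness of the $\div_F\div_F$ bubble complex \eqref{eq:divdivbubble} only yields ${\rm tr}_1(\boldsymbol\tau)=\sym\curl_F(b_F\boldsymbol v_1)$ with $\boldsymbol v_1\in\mathbb P_{k-2}(F;\mathbb R^2)$, not $\mathbb P_{k-5}$ as you assert. With $\boldsymbol v_1$ only in $\mathbb P_{k-2}$, the step ``$(b_F\boldsymbol v_1,\boldsymbol w)_F=0$ for all $\boldsymbol w\in\mathbb P_{k-5}(F;\mathbb R^2)$ forces $\boldsymbol v_1=\boldsymbol 0$'' fails by a dimension count: you have $(k-3)(k-4)$ scalar conditions on a space of dimension $k(k-1)$.

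The paper closes this gap with one extra step that uses information you already established but did not invoke here. Because $\boldsymbol\tau|_e=\boldsymbol 0$ on every edge $e\subset\partial F$, the \emph{full} tensor ${\rm tr}_1(\boldsymbol\tau)=\boldsymbol n\times\boldsymbol\tau\times\boldsymbol n$ vanishes on $\partial F$, not merely its two $H_0(\div_F\div_F)$ edge traces. Combining ${\rm tr}_1(\boldsymbol\tau)|_{\partial F}=\boldsymbol 0$ with $b_F|_{\partial F}=0$ in the identity ${\rm tr}_1(\boldsymbol\tau)=\sym\curl_F(b_F\boldsymbol v_1)$ forces $\boldsymbol v_1|_{\partial F}=\boldsymbol 0$, hence $\boldsymbol v_1=b_F\boldsymbol v_2$ with $\boldsymbol v_2\in\mathbb P_{k-5}(F;\mathbb R^2)$; now the moment argument $(b_F^2\boldsymbol v_2,\boldsymbol v_2)_F=0$ goes through. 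Your ${\rm tr}_2$ argument escapes this issue only because the Hessian bubble complex \eqref{eq:hessianbubble} already carries the factor $b_F^2$, so the preimage degree drops to $k-5$ automatically.
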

\begin{proof}
We split our proof into several steps. For the easy of notation, denote by $\boldsymbol  \sigma =  \nabla\times\bs\tau\times\nabla \in \mathbb P_{k-2}(K;\mathbb S)$. 

\medskip
\noindent {\em Step 1. Traces on edges are vanished.}
By the vanishing degrees of freedom \eqref{Hincfem3ddof1}, \eqref{Hincfem3ddof2}, and \eqref{Hincfem3ddof3}, $\bs\tau\mid _e=\bs0$ and $(\nabla\times\boldsymbol\tau\cdot\boldsymbol  t)\mid _e=\bs0$ for any edge $e\in\mathcal E(F)$.
Then it follows from \eqref{eq:edgetrprop1} and \eqref{eq:edgetrprop2} that 
\begin{align}
\label{eq:20210117-0}
\boldsymbol  n_{F,e}\cdot {\rm tr}_1(\bs\tau)\cdot \boldsymbol  n_{F,e}&= 0,\\
\label{eq:20210117-1}
\big(\partial_t(\boldsymbol  t\cdot{\rm tr}_1(\bs\tau)\cdot \boldsymbol  n_{F,e}) + \boldsymbol  n_{F,e}\cdot\div_F{\rm tr}_1(\bs\tau)\big) &= 0, \\
\label{eq:20210117-2}
 {\rm tr}_2(\boldsymbol  \tau) \cdot \boldsymbol  t_e &= \bs0.
\end{align}
By the vanishing degree of freedom \eqref{Hincfem3ddof1-1}, \eqref{Hincfem3ddof4}, and \eqref{Hincfem3ddof5} for $\boldsymbol  \sigma$, we know all components of $\boldsymbol  \sigma\mid _e$, except $\boldsymbol  t\cdot \boldsymbol  \sigma \cdot \boldsymbol  t$, are zero. 

\medskip

\noindent{\em Step 2. ${\rm tr}_1(\boldsymbol  \tau)$ is vanished.}
Applying the Green's identity \eqref{eq:greenidentitydivdiv} for the $\div_F\div_F$ operator, we get from \eqref{eq:inctr1}, \eqref{eq:20210117-0}-\eqref{eq:20210117-1} and \eqref{Hincfem3ddof6} that
$$
(\boldsymbol  n\cdot \boldsymbol  \sigma \cdot \boldsymbol  n, q)_F = (\div_F\div_F {\rm tr}_1(\boldsymbol  \tau), q)_F=({\rm tr}_1(\boldsymbol  \tau), \nabla_F^2q)_F=0\quad\forall~q\in\mathbb P_{k-5}(F).
$$ 
As $\boldsymbol  n\cdot \boldsymbol  \sigma \cdot \boldsymbol  n\mid _e = 0$, we conclude $\boldsymbol  n\cdot \boldsymbol  \sigma \cdot \boldsymbol  n\mid_F = \div_F\div_F {\rm tr}_1(\boldsymbol  \tau) = 0$. 

Since $\div_F\div_F( {\rm tr}_1(\boldsymbol  \tau) )=0$ and edge traces of ${\rm tr}_1(\boldsymbol  \tau)$ is also vanished, from the bubble complex \eqref{eq:divdivbubble}, there exists $\boldsymbol  v_1\in \mathbb P_{k-2}(F;\mathbb R^2)$ s.t. ${\rm tr}_1(\boldsymbol  \tau) =\sym \curl_F(b_F\boldsymbol  v_1)$. 
Noting that ${\rm tr}_1(\boldsymbol  \tau) |_{\partial F}=\bs0$, we acquire $\boldsymbol  v_1|_{\partial F}=\bs0$. Then there exists $\boldsymbol  v_2\in \mathbb P_{k-5}(F;\mathbb R^2)$ such that ${\rm tr}_1(\boldsymbol  \tau)=\sym \curl_F(b_F^2\boldsymbol  v_2)$.
Due to the vanishing degrees of freedom \eqref{Hincfem3ddof7}, we get from the integration by parts that
$$
(b_F^2\boldsymbol  v_2, \textrm{rot}_F \boldsymbol  q)_F=0\quad\forall~\boldsymbol  q\in \sym(\boldsymbol  x^{\perp}\mathbb P_{k-5}(F;\mathbb R^2)).
$$
Using the fact $\textrm{rot}_F \sym(\boldsymbol  x^{\perp}\mathbb P_{k-5}(F;\mathbb R^2))=\mathbb P_{k-5}(F;\mathbb R^2)$, cf. the complex \eqref{eq:hessiancomplexPolydouble},
we can chose $\bs q$ s.t. $ \textrm{rot}_F\bs q = \bs v_2$ and conclude $\boldsymbol  v_2 = 0$. Therefore $\tr_1(\boldsymbol  \tau) = \boldsymbol  n\times\boldsymbol\tau\times\boldsymbol  n =\boldsymbol 0$.

\medskip

\noindent{\em Step 3. ${\rm tr}_2(\boldsymbol  \tau)$ is vanished.}
Similarly we obtain from \eqref{eq:inctr2}, the integration by parts, \eqref{eq:20210117-2} and \eqref{Hincfem3ddof8} that
$$
(\boldsymbol  n\cdot \boldsymbol  \sigma \times \boldsymbol  n, \boldsymbol  q)_F = (\nabla_F^{\bot}\cdot {\rm tr}_2(\boldsymbol  \tau), \boldsymbol  q)_F=({\rm tr}_2(\boldsymbol  \tau), \sym \curl_F\boldsymbol  q)_F=0\; \forall~\boldsymbol  q\in\mathbb P_{k-5}(F;\mathbb R^2).
$$ 
Together with $\boldsymbol  n\cdot \boldsymbol  \sigma \times \boldsymbol  n \mid_e = 0$, we conclude 
$$
\boldsymbol  n\cdot \boldsymbol  \sigma \times \boldsymbol  n \mid_F = \nabla_F^{\bot}\cdot {\rm tr}_2(\boldsymbol  \tau) \mid_F = 0.
$$
Then by the bubble complex \eqref{eq:hessianbubble}, there exists a $v \in \mathbb P_{k-5}(F)$ such that ${\rm tr}_2(\boldsymbol  \tau) = \nabla_F^2(b_F^2 v)$. Using the fact $\div_F\div_F: \boldsymbol  x \boldsymbol  x^{\intercal}\mathbb P_{k-5}(F) \to \mathbb P_{k-5}(F)$ is bijective, cf. the complex \eqref{eq:divdivcomplexPolydouble}, we can find $\boldsymbol  q\in \boldsymbol  x \boldsymbol  x^{\intercal}\mathbb P_{k-5}(F)$ s.t. $\div_F\div_F \boldsymbol  q = v$. Now using the vanishing degree of freedom \eqref{Hincfem3ddof9}, we get
$$
({\rm tr}_2(\boldsymbol  \tau), \boldsymbol  q)_F = (b_F^2 v, \div_F\div_F \boldsymbol  q)_F = (b_F^2 v, v)_F = 0,
$$
which means $v = 0$ and consequently ${\rm tr}_2(\boldsymbol  \tau) = \boldsymbol0$. 
\end{proof}


Now we are in the position to present the unisolvence. 
\begin{theorem}\label{thm:unisovlenHincfem}
The degrees of freedom \eqref{Hincfem3ddof1}-\eqref{Hincfem3ddof10} are unisolvent for $\mathbb P_{k}(K;\mathbb S)$.
\end{theorem}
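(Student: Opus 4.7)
The plan is the standard two-step argument: verify that the number of degrees of freedom equals $\dim \mathbb P_k(K;\mathbb S) = (k+1)(k+2)(k+3)$, and prove injectivity, i.e.\ that the vanishing of all DoFs \eqref{Hincfem3ddof1}--\eqref{Hincfem3ddof10} forces $\boldsymbol \tau = 0$. I would sketch the dimension count by collecting the vertex and edge contributions directly, the face contributions via the two-dimensional decompositions $\mathbb P_{k-3}(F;\mathbb S) = \sym\curl_F\mathbb P_{k-5}(F;\mathbb R^2) \oplus \boldsymbol x\boldsymbol x^{\intercal}\mathbb P_{k-5}(F)$ and $\mathbb P_{k-3}(F;\mathbb S) = \nabla_F^2\mathbb P_{k-5}(F) \oplus \sym(\boldsymbol x^{\perp}\mathbb P_{k-5}(F;\mathbb R^2))$ coming from \eqref{eq:divdivcomplexPolydouble} and \eqref{eq:hessiancomplexPolydouble}, and the volume contribution using \eqref{eq:polydecomp2}.

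For injectivity, assume all DoFs vanish. By Lemma \ref{lem:unisovlenHincfempre0}, ${\rm tr}_1(\boldsymbol \tau) = \boldsymbol 0$ and ${\rm tr}_2(\boldsymbol \tau) = \boldsymbol 0$ on every face; Step~1 of that proof also yields $\boldsymbol \tau|_e = \boldsymbol 0$ and $(\nabla \times \boldsymbol \tau)\cdot\boldsymbol t|_e = \boldsymbol 0$ on every edge. Hence $\boldsymbol \tau \in \boldsymbol B_{K,k}$ and every boundary term in the symmetric Green's identity \eqref{eq:incsymGreenidentity} vanishes whenever one of the arguments is $\boldsymbol \tau$. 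Pairing against an arbitrary $\boldsymbol \eta\in\mathbb P_{k-4}(K;\mathbb S)$ and using the first piece of \eqref{Hincfem3ddof10},
\begin{equation*}
(\inc \boldsymbol \tau,\boldsymbol \eta)_K = (\boldsymbol \tau,\inc \boldsymbol \eta)_K = 0.
\end{equation*}

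To upgrade this into $\inc \boldsymbol \tau = 0$, I would observe that the bubble complex \eqref{eq:elascomplexbubble} places $\inc \boldsymbol \tau \in \boldsymbol B_{K,k-2}^n$, so the Hu-Zhang structural result \eqref{eq:divbubble} gives the expansion
\begin{equation*}
\inc \boldsymbol \tau = \sum_{0 \le i < j \le 3} \lambda_i\lambda_j\, p_{ij}\,\boldsymbol T_{ij}, \qquad p_{ij} \in \mathbb P_{k-4}(K).
\end{equation*}
Since the six matrices $\{\boldsymbol T_{ij}\}$ are a basis of $\mathbb S$, any $\boldsymbol \eta\in\mathbb P_{k-4}(K;\mathbb S)$ can be written as $\boldsymbol \eta = \sum_{k<l} q_{kl}\boldsymbol T_{kl}$ with $q_{kl}\in\mathbb P_{k-4}(K)$. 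Choosing $q_{kl} = \sum_{i<j}(M^{-1})_{kl,ij}\, p_{ij}$, where $M_{ij,kl} := \boldsymbol T_{ij} : \boldsymbol T_{kl}$ is the (positive definite) Gram matrix, one finds
\begin{equation*}
0 = (\inc \boldsymbol \tau,\boldsymbol \eta)_K = \sum_{i<j}\int_K \lambda_i\lambda_j\, p_{ij}^2\,\dd x,
\end{equation*}
and since $\lambda_i\lambda_j > 0$ in the interior, all $p_{ij}$ vanish, so $\inc \boldsymbol \tau = 0$. Exactness of \eqref{eq:elascomplexbubble} then produces $\boldsymbol v\in\mathbb P_{k-3}(K;\mathbb R^3)$ with $\boldsymbol \tau = \defm(b_K\boldsymbol v)$. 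Finally, Lemma \ref{lem:divsymtensoronto} furnishes $\boldsymbol v'\in\mathbb P_{k-3}(K;\mathbb R^3)$ with $\div\sym(\boldsymbol v'\boldsymbol x^{\intercal}) = \boldsymbol v$, and testing against the remaining piece of \eqref{Hincfem3ddof10} together with integration by parts (the boundary term disappears because $b_K$ vanishes on $\partial K$) gives
\begin{equation*}
0 = (\defm(b_K\boldsymbol v),\sym(\boldsymbol v'\boldsymbol x^{\intercal}))_K = -(b_K\boldsymbol v,\boldsymbol v)_K = -\int_K b_K|\boldsymbol v|^2\,\dd x,
\end{equation*}
forcing $\boldsymbol v = 0$ and hence $\boldsymbol \tau = 0$.

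The main obstacle will be the step that converts the $\mathbb P_{k-4}$-orthogonality of $\inc \boldsymbol \tau$ into its actual vanishing, because a priori $\inc \boldsymbol \tau$ sits in the strictly larger space $\mathbb P_{k-2}(K;\mathbb S)$. This gap is closed precisely by the Hu-Zhang characterization \eqref{eq:divbubble}: the free scalar coefficients of an $\boldsymbol H(\div;\mathbb S)$-bubble of degree $k-2$ live in $\mathbb P_{k-4}(K)$, exactly the degree available for testing, and the positivity of the weights $\lambda_i\lambda_j$ in the interior converts orthogonality into pointwise vanishing.
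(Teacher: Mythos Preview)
Your argument is correct and follows the paper's overall strategy: count dimensions, invoke Lemma~\ref{lem:unisovlenHincfempre0} to kill both traces, use the symmetric Green's identity to convert the $\inc\mathbb P_{k-4}$ piece of \eqref{Hincfem3ddof10} into $(\inc\boldsymbol\tau,\boldsymbol\eta)_K=0$ for all $\boldsymbol\eta\in\mathbb P_{k-4}(K;\mathbb S)$, deduce $\inc\boldsymbol\tau=0$, then peel off the remaining $\defm(b_K\boldsymbol v)$ via the $\sym(\boldsymbol v\boldsymbol x^{\intercal})$ piece of \eqref{Hincfem3ddof10} and Lemma~\ref{lem:divsymtensoronto}.

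The one step where you take a different path is the passage from $(\inc\boldsymbol\tau,\mathbb P_{k-4}(K;\mathbb S))_K=0$ to $\inc\boldsymbol\tau=0$. The paper does this by invoking the Hu--Zhang unisolvence \eqref{HdivSfemdof1}--\eqref{HdivSfemdof5} directly: the vertex values of $\inc\boldsymbol\tau$ vanish by \eqref{Hincfem3ddof1-1}, the edge normal components by \eqref{Hincfem3ddof4}--\eqref{Hincfem3ddof5}, the full face trace $(\inc\boldsymbol\tau)\cdot\boldsymbol n$ vanishes because $\tr_1(\boldsymbol\tau)=\tr_2(\boldsymbol\tau)=0$ together with \eqref{eq:inctr1}--\eqref{eq:inctr2}, and the interior moment vanishes by the Green's identity step; hence all Hu--Zhang DoFs of $\inc\boldsymbol\tau$ are zero. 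You instead note $\inc\boldsymbol\tau\in\boldsymbol B_{K,k-2}^n$ via the bubble complex \eqref{eq:elascomplexbubble}, expand in the basis $\lambda_i\lambda_j\boldsymbol T_{ij}$ of \eqref{eq:divbubble} with coefficients in $\mathbb P_{k-4}$, and use a Gram-matrix test function to extract $\sum_{i<j}\int_K\lambda_i\lambda_j p_{ij}^2=0$. This is a perfectly valid alternative; it is more self-contained (you never need the full Hu--Zhang unisolvence statement, only the bubble characterization \eqref{eq:divbubble}) at the cost of a small linear-algebra computation, whereas the paper's route is shorter because the same bubble structure is already packaged inside the cited unisolvence result.
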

\begin{proof}
%
We count the number of degrees of freedom \eqref{Hincfem3ddof1}-\eqref{Hincfem3ddof10} by the dimension of the sub-simplex
\begin{itemize}
 \item $4$ vertices: $4\times (6 + 3\times 6 + 6) = 120;$
 \smallskip
 \item $6$ edges: $6 [ 6(k-3) + 3(k-2) + 3(k-3) + 2(k-3)] = 6[14(k-3)+3]$;
 \smallskip
  \item $4$ faces: $4 \big [ {k-3 \choose 2} - 3 + 2{k-3 \choose 2} +  2 {k-3 \choose 2} - 3 + {k-3 \choose 2} \big ] = 4\big [3(k-3)(k-4)-6\big ];$

  \item $1$ volume: $6{k-1 \choose 3} - 3{k \choose 3} + 6 + 3{k \choose 3} = (k-1)(k-2)(k-3) + 6 = k^3-6k^2+11k.$ 
\end{itemize}
The total dimension is $k^3+6k^2+11k+6$,
which agrees with $\dim\mathbb P_k(K;\mathbb S)$.

Take any $\boldsymbol \tau\in\mathbb P_{k}(K;\mathbb S)$ and
suppose all the degrees of freedom \eqref{Hincfem3ddof1}-\eqref{Hincfem3ddof10} vanish. We are going to prove it is zero. 

First of all, by Lemma~\ref{lem:unisovlenHincfempre0}, we conclude $\tr_1(\boldsymbol  \tau) =\boldsymbol 0$ and ${\rm tr}_2(\boldsymbol  \tau)=\boldsymbol  0$, which immediately induce $(\inc\bs\tau \cdot \boldsymbol  n)|_{\partial K}=\bs0$ from \eqref{eq:inctr1}-\eqref{eq:inctr2}.
Then it follows from \eqref{Hincfem3ddof10} and the Green's identity \eqref{eq:incsymGreenidentity} that
$$
(\inc\bs\tau, \boldsymbol  q)_K=0\quad\forall~\boldsymbol  q\in\mathbb P_{k-4}(K;\mathbb S),
$$
which together with the unisolvence of the Hu-Zhang element (cf. \eqref{HdivSfemdof1}-\eqref{HdivSfemdof5}) implies $\inc\bs\tau=\bs0$. 

By the complex for bubble function spaces \eqref{eq:elascomplexbubble}, there exists $\boldsymbol  v\in\mathbb P_{k-3}(K;\mathbb R^3)$ such that $\bs\tau=\defm(b_K\boldsymbol  v)$. 
Due to the vanishing degrees of freedom \eqref{Hincfem3ddof10}, we get from the integration by parts that
$$
(b_F\boldsymbol  v, \div\boldsymbol  q)_F=0\quad\forall~\boldsymbol  q\in\sym(\mathbb P_{k-3}(K;\mathbb R^3)\boldsymbol  x^{\intercal}).
$$
We finish the proof by using the fact $\div\sym(\boldsymbol  x\mathbb P_{k-3}(K;\mathbb R^3))=\mathbb P_{k-3}(K;\mathbb R^3)$, cf. Lemma \ref{lem:divsymtensoronto} and the complex \eqref{eq:elascomplex3dPolydouble}.
\end{proof}



\subsection{Finite element elasticity complex in three dimensions}

For an integer $k\geq 6$, define global finite elements
\begin{align*}
\boldsymbol  V_h:=\{\boldsymbol  v_h\in \boldsymbol  H^1(\Omega; \mathbb R^3): &\boldsymbol  v_h|_K\in\mathbb P_{k+1}(K;\mathbb R^3) \textrm{ for each } K\in\mathcal T_h, \textrm{ all the } \\
& \textrm{ degrees of freedom \eqref{H1femdof1}-\eqref{H1femdof4} are single-valued} \},\\
\bs\Sigma_h^{\rm inc}:=\{\bs\tau_h\in \boldsymbol  L^2(\Omega; \mathbb S): &\bs\tau_h|_K\in\mathbb P_{k}(K;\mathbb S) \textrm{ for each } K\in\mathcal T_h, \textrm{ all the } \\
& \textrm{ degrees of freedom \eqref{Hincfem3ddof1}-\eqref{Hincfem3ddof9} are single-valued} \},\\
\bs\Sigma_h^{\rm div}:=\{\bs\tau_h\in \boldsymbol  H(\div,\Omega; \mathbb S): &\bs\tau_h|_K\in\mathbb P_{k-2}(K;\mathbb S) \textrm{ for each } K\in\mathcal T_h, \textrm{ all the } \\
& \textrm{ degrees of freedom \eqref{HdivSfemdof1}-\eqref{HdivSfemdof4} are single-valued} \},\\
\mathcal Q_h :=\{\boldsymbol  q_h\in L^2(\Omega;\mathbb R^3): &\boldsymbol  q_h|_K\in \mathbb P_{k-3}(K;\mathbb R^3) \textrm{ for each } K\in\mathcal T_h\}.
\end{align*}
Thanks to Lemma~\ref{lem:unisovlenHincfempre0} and the fact $\boldsymbol  \tau\mid _e$ and $(\nabla\times\boldsymbol\tau\cdot\boldsymbol  t)\mid _e$ are single valued, it holds $\bs\Sigma_h^{\rm inc}\subset \boldsymbol  H(\inc, \Omega; \mathbb S)$, cf. Remark \ref{eq:edgecancelation}. 

Counting the dimensions of these spaces, we have
\begin{align*}
\dim\boldsymbol  V_h&= 30\#\mathcal V_h+ (9k-30)\#\mathcal E_h + \frac{3}{2}(k-3)(k-4)\#\mathcal F_h + \frac{1}{2}k(k-1)(k-2)\#\mathcal T_h,\\
\dim\bs\Sigma_h^{\rm inc}&= 30\#\mathcal V_h+ (14k-39)\#\mathcal E_h + \left(3k^2-21k+30\right)\#\mathcal F_h \\
&\quad+(k^3-6k^2+11k)\#\mathcal T_h,\\
\dim\bs\Sigma_h^{\rm div}&= 6\#\mathcal V_h+ (5k-15)\#\mathcal E_h + \frac{3}{2}(k-3)(k-4)\#\mathcal F_h \\
&\quad+(k-1)(k-2)(k-3)\#\mathcal T_h,\\
\dim\mathcal Q_h &= \frac{1}{2}k(k-1)(k-2)\#\mathcal T_h.
\end{align*}

\begin{lemma}\label{lem:discretekerinc}
Let $\bs\tau\in \bs\Sigma_h^{\rm inc}$ and $\inc\bs\tau=\bs0$.
Then there exists $\boldsymbol  v\in\boldsymbol  V_h$ satisfying $\bs\tau=\defm\boldsymbol  v$.
\end{lemma}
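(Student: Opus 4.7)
The plan is to construct $\boldsymbol v$ in two stages: combine local polynomial antiderivatives (from the polynomial elasticity complex) with a global $\boldsymbol H^1$ antiderivative (from the continuous elasticity complex), and then verify the resulting piecewise polynomial function satisfies the Neilan degrees of freedom \eqref{H1femdof1}--\eqref{H1femdof4} in a single-valued way.

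\emph{Antiderivative.} On each $K\in\mathcal T_h$, since $\bs\tau|_K\in\mathbb P_k(K;\mathbb S)$ and $\inc\bs\tau|_K=\bs0$, Lemma~\ref{lm:polycomplex} yields a polynomial $\boldsymbol v_K\in\mathbb P_{k+1}(K;\mathbb R^3)$ with $\defm\boldsymbol v_K=\bs\tau|_K$, unique modulo $\bs{RM}$. Because $\bs\Sigma_h^{\rm inc}\subset\boldsymbol H(\inc,\Omega;\mathbb S)$ and $\inc\bs\tau=\bs0$ in the distributional sense on $\Omega$, exactness of the continuous elasticity complex~\eqref{eq:elasticitycomplex} furnishes $\boldsymbol v\in\boldsymbol H^1(\Omega;\mathbb R^3)$ with $\defm\boldsymbol v=\bs\tau$, and local uniqueness forces $\boldsymbol v|_K\in\mathbb P_{k+1}(K;\mathbb R^3)$.

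\emph{Single-valuedness of the Neilan DOFs.} Continuity of $\boldsymbol v\in\boldsymbol H^1$ immediately handles the value-type DOFs $\boldsymbol v(\delta)$ in \eqref{H1femdof1}, the edge moments \eqref{H1femdof2}, and the face moments \eqref{H1femdof4}. For the remaining extra-smoothness conditions I would argue as follows.
\begin{itemize}
\item The Hessian $\nabla^2\boldsymbol v(\delta)$ is recovered pointwise from $\nabla\bs\tau(\delta)$ by the Cesaro-type identity
\[
\partial_k\partial_j v_i=\partial_k\tau_{ij}+\partial_j\tau_{ik}-\partial_i\tau_{jk},
\]
so single-valuedness follows from the vertex DOF \eqref{Hincfem3ddof1} applied to $\bs\tau$.
\item The gradient $\nabla\boldsymbol v(\delta)$ has symmetric part $\bs\tau(\delta)$, single-valued by \eqref{Hincfem3ddof1}; its skew part $\tfrac12\mskw(\nabla\times\boldsymbol v)(\delta)$ is obtained by path integration along mesh edges using the identity $\nabla\times\defm\boldsymbol v=\tfrac12\grad(\nabla\times\boldsymbol v)$, which yields $\partial_{t_e}(\nabla\times\boldsymbol v)=2(\nabla\times\bs\tau)\cdot\boldsymbol t_e$ on each edge $e$. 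The integrand is single-valued on $e$ by the edge DOF \eqref{Hincfem3ddof3}, and path-independence of the reconstruction is automatic from the already constructed global $\boldsymbol v\in\boldsymbol H^1$.
\item The edge DOFs \eqref{H1femdof3} then follow by combining $\defm\boldsymbol v|_e=\bs\tau|_e$ single-valued (from \eqref{Hincfem3ddof2}) with $(\nabla\times\boldsymbol v)|_e$ single-valued (its value at the vertex endpoints plus its tangential derivative are single-valued, as just established), giving the full $\nabla\boldsymbol v|_e$ and hence the normal derivatives $\partial_{n_i}\boldsymbol v|_e$ single-valued.
\end{itemize}

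\emph{Main obstacle.} The key technical point is propagating the extra vertex/edge smoothness of $\bs\tau$ encoded in the DOFs of $\bs\Sigma_h^{\rm inc}$---beyond what $\boldsymbol H(\inc)$-conformity alone provides---to the corresponding smoothness of $\boldsymbol v$. The Cesaro identity handles the vertex Hessian, while the coupling $\nabla\times\defm\boldsymbol v=\tfrac12\grad(\nabla\times\boldsymbol v)$ is the bridge by which the edge DOF \eqref{Hincfem3ddof3} upgrades $(\nabla\times\boldsymbol v)$ (and hence the skew part of $\nabla\boldsymbol v$) to a single-valued quantity along edges and at vertices. The use of the global $\boldsymbol H^1$ antiderivative at the start is what removes any potential monodromy in the edge-path reconstruction.
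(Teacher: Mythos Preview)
Your approach is essentially the same as the paper's: obtain a global $\boldsymbol v\in\boldsymbol H^1(\Omega;\mathbb R^3)$ from the continuous elasticity complex that is elementwise $\mathbb P_{k+1}$, then propagate the extra smoothness of $\bs\tau$ to $\boldsymbol v$ via the Cesaro identity for $\nabla^2\boldsymbol v(\delta)$ and the relation $\partial_t(\nabla\times\boldsymbol v)=2(\nabla\times\bs\tau)\cdot\boldsymbol t$ on edges.

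One step is underspecified, however. Your appeal to ``path-independence from the global $\boldsymbol H^1$ antiderivative'' for the single-valuedness of $(\nabla\times\boldsymbol v)(\delta)$ does not by itself close the argument: integrating $\partial_t(\nabla\times\boldsymbol v)$ along edges only shows that the jump $[\nabla\times\boldsymbol v]_{K_1,K_2}$ is \emph{constant} along a shared edge, not that it vanishes. The paper fills this gap with an observation you have not made explicit: from $\partial_t\boldsymbol v=(\defm\boldsymbol v)\cdot\boldsymbol t+\tfrac12(\nabla\times\boldsymbol v)\times\boldsymbol t$ and the continuity of $\boldsymbol v$ (hence of $\partial_t\boldsymbol v$) along edges, one gets that $(\nabla\times\boldsymbol v)\times\boldsymbol t$ is single-valued on every edge; at a vertex of a shared face two linearly independent edge tangents meet, so the full vector $(\nabla\times\boldsymbol v)(\delta)$ is pinned down. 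Once this is in hand, your remaining steps (edge single-valuedness of $\nabla\times\boldsymbol v$ via its tangential derivative, and hence of $\nabla\boldsymbol v=\defm\boldsymbol v+\tfrac12\mskw(\nabla\times\boldsymbol v)$) go through exactly as in the paper.
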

\begin{proof}
By the polynomial elasticity complex \eqref{eq:elascomplex3dPoly} and the elasticity complex \eqref{eq:elasticitycomplex}, there exists $\boldsymbol  v=(v_1, v_2, v_3)^{\intercal}\in \boldsymbol  H^1(\Omega;\mathbb R^3)$ s.t. $\bs\tau=\defm\boldsymbol  v$ and $\boldsymbol  v|_K\in\mathbb P_{k+1}(K;\mathbb R^3)$ for each $K\in\mathcal T_h$. We are going to show such $\boldsymbol  v\in \boldsymbol  V_h$ by verifying the continunity of degree of freedoms \eqref{H1femdof1}-\eqref{H1femdof4}. As an $H^1$ element, $\boldsymbol  v$ is continuous at vertices, edges and faces. The focus is on the derivatives of $\boldsymbol  v$. 

Due to the additional smoothness of $\bs\Sigma_h^{\rm inc}$,
$\nabla(\defm\boldsymbol  v)(\delta)$ is single-valued at each vertex $\delta\in\mathcal V_h$, and 
$(\defm\boldsymbol  v)\mid _e$ and $(\nabla\times(\defm\boldsymbol  v)\cdot\boldsymbol  t)\mid _e$ are single-valued on each edge $e\in\mathcal E_h$.
Next we show that $\nabla\boldsymbol  v$ is single-valued on each edge $e\in\mathcal E_h$. By \eqref{eq:du},
$$
\partial_t\boldsymbol  v=(\nabla \boldsymbol  v)^{\intercal}\cdot\boldsymbol  t = (\defm\boldsymbol  v)\cdot\boldsymbol  t+\frac{1}{2}\mskw(\nabla\times\boldsymbol  v)\cdot\boldsymbol  t
=(\defm\boldsymbol  v)\cdot\boldsymbol  t+\frac{1}{2}(\nabla\times\boldsymbol  v)\times \boldsymbol  t,
$$
\begin{equation}\label{eq:tr2prope}
\nabla\times(\defm\boldsymbol  v)\cdot\boldsymbol  t =\frac{1}{2}\partial_t(\nabla\times\boldsymbol  v)
\end{equation}
on edge $e$ with the unit tangential vector $\boldsymbol  t$. Hence $(\nabla\times\boldsymbol  v)\times \boldsymbol  t$ and $\partial_t(\nabla\times\boldsymbol  v)$ are single-valued across $e$. Take any face $F\in\mathcal F_h^i$ shared by $K_1, K_2\in\mathcal T_h$, it follows from the single-valued $(\nabla\times\boldsymbol  v)\times \boldsymbol  t|_{\partial F}$ that $(\nabla\times\boldsymbol  v)|_{K_1}$ and $(\nabla\times\boldsymbol  v)|_{K_2}$ coincides with each other at the three vertices of $F$. Thus $(\nabla\times\boldsymbol  v)(\delta)$ is single-valued at each vertex $\delta\in\mathcal V_h$, which together with the single-valued $\partial_t(\nabla\times\boldsymbol  v)$ on $\mathcal E_h$ implies that $\nabla\times\boldsymbol  v$ is single-valued on each edge $e\in\mathcal E_h$.
Since $(\nabla \boldsymbol  v)^{\intercal}=\defm\boldsymbol  v+\frac{1}{2}\mskw(\nabla\times\boldsymbol  v)$, $\nabla\boldsymbol  v$ is single-valued on each edge $e\in\mathcal E_h$.

By the identity 
$$
\partial_{ij}v_k = \partial_{i}((\defm\boldsymbol  v)_{jk}) + \partial_{j}((\defm\boldsymbol  v)_{ki}) -\partial_{k}((\defm\boldsymbol  v)_{ij})\quad\textrm{ for } i,j,k=1,2,3,
$$
the tensor $\nabla^2\boldsymbol  v(\delta)$ is single-valued at each vertex $\delta\in\mathcal V_h$ as $\nabla(\defm\boldsymbol  v)(\delta)$ is single-valued.
Therefore $\boldsymbol  v\in\boldsymbol  V_h$.
\end{proof}

\begin{theorem}
The finite element elasticity complex
\begin{equation}\label{eq:elascomplex3dfem}
\boldsymbol{RM}\autorightarrow{$\subset$}{} \boldsymbol  V_h\autorightarrow{$\defm$}{}\bs\Sigma_h^{\rm inc}\autorightarrow{$\inc$}{} \bs\Sigma_h^{\rm div} \autorightarrow{${\div}$}{} \mathcal Q_h\autorightarrow{}{}0
\end{equation}
is exact.
\end{theorem}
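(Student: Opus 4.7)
The plan is to verify that \eqref{eq:elascomplex3dfem} is a complex and then to check exactness at each of its four positions, leveraging Lemma~\ref{lem:discretekerinc} and the classical surjectivity of the Hu--Zhang $\div$ to reduce the remaining work to a global dimension count.

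First, for the complex property, the pointwise identities $\inc\circ\defm=0$ and $\div\circ\inc=0$ take care of the vanishing of consecutive compositions, and $\div\bs\Sigma_h^{\rm div}\subset\mathcal Q_h$ is clear from polynomial degrees. For $\defm\boldsymbol V_h\subset\bs\Sigma_h^{\rm inc}$, each degree of freedom of $\bs\Sigma_h^{\rm inc}$ applied to $\defm\boldsymbol v$ reduces either to an intrinsic quantity of $\boldsymbol v$ --- through \eqref{eq:tr2prope} on edges and \eqref{eq:tr2complex1}--\eqref{eq:tr2complex2} on faces --- or vanishes by $\inc\defm=0$; the vertex DOFs are read off $\nabla^2\boldsymbol v(\delta)$. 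For $\inc\bs\Sigma_h^{\rm inc}\subset\bs\Sigma_h^{\rm div}$, the vertex and edge DOFs \eqref{HdivSfemdof1}--\eqref{HdivSfemdof3} of $\inc\bs\tau$ are literally the DOFs \eqref{Hincfem3ddof1-1} and \eqref{Hincfem3ddof4}--\eqref{Hincfem3ddof5} of $\bs\Sigma_h^{\rm inc}$, while the face DOF $(\inc\bs\tau)\cdot\boldsymbol n|_F$ is determined by ${\rm tr}_1(\bs\tau)$ and ${\rm tr}_2(\bs\tau)$ via \eqref{eq:inctr1}--\eqref{eq:inctr2}, both of which are single valued by Lemma~\ref{lem:unisovlenHincfempre0}.

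Next, exactness at $\boldsymbol V_h$ follows from pointwise Kirchhoff rigidity together with $\boldsymbol{RM}\subset\boldsymbol V_h$ (every rigid motion is a global linear polynomial, so its intrinsic DOFs are automatically single valued); exactness at $\bs\Sigma_h^{\rm inc}$ is precisely Lemma~\ref{lem:discretekerinc}; and surjectivity of $\div:\bs\Sigma_h^{\rm div}\to\mathcal Q_h$ is the standard Hu--Zhang result~\cite{Hu2015,HuZhang2015}. Granted these three ingredients, rank-nullity turns exactness at $\bs\Sigma_h^{\rm div}$ into the single identity
\begin{equation*}
\dim\boldsymbol V_h-\dim\bs\Sigma_h^{\rm inc}+\dim\bs\Sigma_h^{\rm div}-\dim\mathcal Q_h=\dim\boldsymbol{RM}=6,
\end{equation*}
which I verify by inserting the per-entity dimension formulas listed just above the theorem: the per-vertex, per-edge, per-face, and per-tetrahedron alternating sums evaluate to $6,-6,6,-6$ respectively, and combining these with the Euler identity $\#\mathcal V_h-\#\mathcal E_h+\#\mathcal F_h-\#\mathcal T_h=1$ for a contractible $\Omega$ gives exactly $6$.

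The main obstacle is conceptual rather than arithmetic: one must recognize in the complex-property check that the Hu--Zhang face DOF $(\inc\bs\tau)\cdot\boldsymbol n$ is exactly encoded by the two face traces $\div_F\div_F{\rm tr}_1(\bs\tau)$ and $\nabla_F^{\bot}\cdot{\rm tr}_2(\bs\tau)$ appearing in \eqref{eq:inctr1}--\eqref{eq:inctr2} --- this is precisely why the $H(\inc)$ element was designed with the face DOFs \eqref{Hincfem3ddof6}--\eqref{Hincfem3ddof9}. Once this match is in hand, every remaining step is routine bookkeeping.
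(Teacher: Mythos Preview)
Your proof is correct and follows essentially the same route as the paper: verify the complex property via the trace identities \eqref{eq:tr2prope}, \eqref{eq:tr2complex1}--\eqref{eq:tr2complex2}, \eqref{eq:inctr1}--\eqref{eq:inctr2} together with Lemma~\ref{lem:unisovlenHincfempre0}, invoke Lemma~\ref{lem:discretekerinc} and the Hu--Zhang surjectivity, and close with a dimension count that collapses to Euler's formula. The only cosmetic difference is that you package the count as the single alternating sum $\dim\boldsymbol V_h-\dim\bs\Sigma_h^{\rm inc}+\dim\bs\Sigma_h^{\rm div}-\dim\mathcal Q_h=6$, whereas the paper computes $\dim(\bs\Sigma_h^{\rm div}\cap\ker\div)$ and $\dim(\inc\bs\Sigma_h^{\rm inc})$ separately before subtracting; the arithmetic is identical.
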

\begin{proof}
The inclusion $\defm\boldsymbol  V_h\subseteq \bs\Sigma_h^{\rm inc}$ follows from \eqref{eq:tr2prope} and \eqref{eq:tr2complex1}-\eqref{eq:tr2complex2}, and $\inc\bs\Sigma_h^{\rm inc}\subseteq \bs\Sigma_h^{\div}$ holds from \eqref{eq:inctr1}-\eqref{eq:inctr2} and Lemma~\ref{lem:unisovlenHincfempre0}. The proof of $\div\bs\Sigma_h^{\rm \div}=\mathcal Q_h$ can be found in~\cite{Hu2015,HuZhang2015}. 
Hence \eqref{eq:elascomplex3dfem} is a complex. And
\begin{align*}
\dim(\bs\Sigma_h^{\rm \div}\cap\ker(\div))&=\dim\bs\Sigma_h^{\rm \div}-\dim\mathcal Q_h 
 \\
&=6\#\mathcal V_h+ (5k-15)\#\mathcal E_h + \frac{3}{2}(k-3)(k-4)\#\mathcal F_h \\
&\quad+\frac{1}{2}(k-1)(k-2)(k-6)\#\mathcal T_h.
\end{align*}
It follows from Lemma~\ref{lem:discretekerinc} that $\defm\boldsymbol  V_h=\bs\Sigma_h^{\rm inc}\cap\ker(\inc)$.
Thus
\begin{align*}
\dim(\inc\bs\Sigma_h^{\inc})&=\dim\bs\Sigma_h^{\inc}-\dim\defm\boldsymbol  V_h =\dim\bs\Sigma_h^{\inc}-\dim\boldsymbol  V_h+6
 \\
&=(5k-9)\#\mathcal E_h + \frac{3}{2}(k^2-7k+8)\#\mathcal F_h \\
&\quad+\frac{1}{2}(k^3-9k^2+20k)\#\mathcal T_h+6.
\end{align*}
Then we get from the Euler's identity that
\begin{align*}
\dim(\bs\Sigma_h^{\rm \div}\cap\ker(\div))-\dim(\inc\bs\Sigma_h^{\inc})&=6\#\mathcal V_h-6\#\mathcal E_h+6\#\mathcal F_h-6\#\mathcal T_h+6=0.
\end{align*}
Therefore $\bs\Sigma_h^{\rm \div}\cap\ker(\div)=\inc\bs\Sigma_h^{\inc}$.
\end{proof}

\bibliographystyle{abbrv}
\bibliography{./paper}
\end{document}